\definecolor{lightblue}{rgb}{0.22,0.45,0.70}
\definecolor{lightgreen}{rgb}{0.22,0.50,0.25}
\definecolor{darkred}{rgb}{0.82,0.15,0.20}
\definecolor{darkblue}{rgb}{0.82,0.15,0.12}
\renewenvironment{proof}{\noindent{\it Proof.}}{\hfill$\square$}
\numberwithin{equation}{section}
\numberwithin{figure}{section}
\numberwithin{table}{section}
\numberwithin{lemma}{section}
\numberwithin{corollary}{section}
\numberwithin{theorem}{section}
\numberwithin{remark}{section}
\newcommand\cero{\boldsymbol{0}}
\newcommand\bI{\mathbf{I}}
\newcommand\bV{\mathbf{V}}
\newcommand{\bZ}{\mathbf{Z}}
\newcommand\bPi{\boldsymbol{\Pi}}
\newcommand\beps{\boldsymbol{\varepsilon}}
\newcommand\ff{\boldsymbol{f}}
\newcommand\bg{\boldsymbol{g}}
\newcommand\nn{\boldsymbol{n}}
\newcommand\br{\boldsymbol{r}}
\newcommand\bu{\boldsymbol{u}}
\newcommand\bv{\boldsymbol{v}}
\newcommand\bw{\boldsymbol{w}}
\newcommand{\cA}{\mathcal{A}}
\newcommand{\cL}{\mathcal{L}}
\newcommand\cE{\mathcal{E}}
\newcommand\bbK{\mathbb{K}}
\newcommand{\bbV}{\mathbb{V}}
\newcommand\bbP{\mathbb{P}}
\newcommand\cT{\mathcal{T}}
\newcommand{\cN}{\mathcal{N}}
\newcommand{\norm}[1]{\left\|#1\right\|}
\newcommand{\nV}[1]{\vertiii{#1}}
\newcommand{\g}{\Gamma}
\newcommand\vdiv{\mathop{\mathrm{div}}\nolimits}
\newcommand\bt{\boldsymbol{t}}
\newcommand\bnabla{\boldsymbol{\nabla}}
\newcommand\bDelta{\boldsymbol{\Delta}}
\newcommand{\ds}{\,\mathrm{d}s}
\newcommand{\vertiii}[1]{{\left\vert\kern-0.25ex\left\vert\kern-0.25ex\left\vert #1 
		\right\vert\kern-0.25ex\right\vert\kern-0.25ex\right\vert}}
\begin{document}
	\titlerunning{Nitsche stabilized VEM for a Brinkman problem with mixed boundary conditions}
	\title{Nitsche stabilized Virtual element approximations for a Brinkman problem with mixed boundary conditions}
	\authorrunning{Mora, Vellojin, Verma}
	\author{David Mora \and Jesus Vellojin  \and Nitesh Verma}
	\institute{
		David Mora \at GIMNAP, Departamento de Matem\'atica, Universidad del B\'io-B\'io, Concepci\'on, Chile; and CI$^2$MA,
		Universidad de Concepci\' on, Chile.\\
		\email{dmora@ubiobio.cl}.
		\and
		Jesus Vellojin \at
		GIMNAP, Departamento de Matem\'atica, Universidad del B\'io-B\'io, Concepci\'on, Chile.\\
		\email{jvellojin@ubiobio.cl}.
		\and
		Nitesh Verma  (corresponding author) \at
		GIMNAP, Departamento de Matem\'atica, Universidad del B\'io-B\'io, Concepci\'on, Chile.\\
		\email{nverma@ubiobio.cl}
	}
	\date{}
	\maketitle
	
	\begin{abstract}
		In this paper, we formulate, analyse and implement the discrete formulation of the Brinkman problem with mixed boundary conditions, including slip boundary condition, using the Nitsche's technique for virtual element methods. {The divergence conforming virtual element spaces for the velocity function and piecewise polynomials for pressure are approached for the discrete scheme. We derive a robust stability analysis of the Nitsche stabilized discrete scheme for this model problem. We establish an optimal and vigorous a priori error estimates of the discrete scheme with constants independent of the viscosity.} Moreover, a set of numerical tests demonstrates the robustness with respect to the physical parameters and verifies the derived convergence results.
	\end{abstract}
	
	\keywords{Brinkman equation \and slip boundary condition \and virtual element methods \and Nitsche method \and a priori error analysis \and numerical experiments.}
	\subclass{}

	\section{Introduction and problem statement}
We are interested in the numerical approximation by the virtual element method
of the Brinkman system with mixed boundary conditions, that is, Dirichlet conditions
on one part on the boundary and slip conditions on the rest of the boundary. These boundary conditions, representing the inflow/outflow in domain, and flow through the boundary wall as well fluid slipping along the boundary wall, have been introduced in the several applications such as water treatment, reverse osmosis and so on.
The Brinkman equations can be seen as an extension of Darcy’s law to describe
the laminar flow behavior of a viscous fluid within a porous material
with possibly heterogeneous permeability, so that the flow is dominated
by Darcy regime on a part of the domain and by Stokes on the other parts of the domain.
In the last years, the numerical solution of this system has acquired great
interest due to high practical importance in different areas,
including several industrial and environmental applications, such as,
filtering porous layers, oil reservoirs, the study of foams, among others.

The virtual element method (VEM) introduced in \cite{daveiga-b13}, belong to the
so-called polytopal methods for solving PDEs by using general polygonal/polyhedral meshes.
These methods have received substantial attention in the last years, for instance,
hybrid high order method \cite{HHO1,HHObook,droniou2,droniou3},
discontinuous Galerkin method \cite{Cangiani14,paola2021,zhao1},
mimetic finite difference method \cite{Mimetic2014},
virtual element method \cite{huang1,MR4510898,MR4586821}
The VEM can be seen as an extension of Finite Elements
Method (FEM) to polygonal or polyhedral decompositions.
The VEM has been applied for different problems in fluid mechanics, see for instance,
\cite{ABMV2014,BMVjsc19,BLVm2an17,BLVsinum18,CGima17,CGSm3as17,GSm3as21,GMScalcolo18,Mora21imajna,vacca17}
and the references therein.

In \cite{GMScalcolo18}, virtual element method
for a pseudostress-velocity formulation for the nonlinear Brinkman flow has been introduced.
The stream virtual element method has been introduced and analyzed in \cite{Mora21imajna}.
In this case, the problem is written as a single
equation in terms of the stream function of the velocity field by using the incompressibility condition, and optimal error estimates, independent of the viscosity parameter, are obtained.
In \cite{BLVm2an17,BLVsinum18} the authors have introduced a novel divergence-free
virtual element method for solving the Stokes and Navier-Stokes problems.
This method has been used in \cite{vacca17} for solving the Brinkman problem
with homogeneous Dirichlet boundary conditions, and the error estimates
independent of the model parameters are written.
Moreover, we also mention recent works for the numerical discretization
of the Brinkman problem by finite element method \cite{anaya16,anaya17,meddahi1,fila1}.

The so-called Nitsche methods can be regarded
as a stabilization technique where some terms are added to the
variational formulation so that the boundary conditions can be incorporated in
a weak form. One of the main advantages of the Nitsche method is its versatility. It can be applied to a wide range of partial differential equations, including elliptic, parabolic, and hyperbolic equations.  For instance,
finite element discretizations with Nitsche method has evolved to handle general boundary conditions \cite{juntunen2009nitsche}, including interface problems \cite{hansbo2005nitsche}, unilateral and frictional contact \cite{chouly2013nitsche,chouly2018unbiased}, or membrane filtration processes \cite{carro_ijnmf24}, among others. On each reference, we observe that a properly formulated penalty terms in the Nitsche method ensure consistency and stability of the numerical solution, even for complex problems and irregular domains. This stability property is crucial for obtaining reliable and accurate results in practical simulations. Moreover, the Nitsche method can be relatively straightforward to implement compared to other approaches for enforcing boundary conditions. Its penalty-based formulation simplifies the incorporation of boundary conditions into the variational formulation of the problem. By the nature of the method, this can be seamlessly integrated with modern numerical techniques such as adaptive mesh refinement, parallel computing, higher-order finite elements, or virtual elements method. In the latter, the literature regarding Nitsche's method is scarce, with a recent contribution by \cite{bertoluzza2022weakly}. Here, the authors study the extension to virtual elements of  the Lagrange multiplier method, in its stabilized formulation as proposed by Barbosa and Hughes \cite{barbosa1991finite}, and the Nitsche method \cite{nitsche91}.  They  proved stability and optimal error estimates, under custom conditions on the stabilization parameters. The results are extended for two and three dimensional geometries with curved domains, where the numerical experiments assess the performance of the method, suggesting a viable alternative to the corresponding scheme in the finite element method.

In the present contribution we propose a Nitsche method
for the Brinkman system with mixed boundary conditions.
We consider Dirichlet condition on a part of the boundary and
slip conditions on the rest of the boundary. {This approach have been employed in several Navier-Stokes models to impose a slip boundary condition. We have for example the work from \cite{urquiza2014weak}, where they compare the Lagrange multiplier and Nitsche's method for the weakly imposition of the slip boundary condition on curved domains. Recently, in  \cite{gjerde2022nitsche} the authors study the weak imposition of the slip-boundary through Nitsche's method and projected normals between the computational and the continuous domain, and also explore the well-known  Babu\v{s}ka-Sapondzhyan Paradox. The type
of conditions behind the models presented in the aforementioned works are important in different areas for fluid flow problems}. 
In particular, these boundary conditions appears naturally in the analysis of numerical methods for desalinization processes, filtration, among others.
For that reason, in this work, we extend the results presented
in \cite{vacca17} for the new model problem. {Unlike the Dirichlet boundary conditions, the slip-boundary conditions are inadequate to impose  strongly for discrete solution. This drawback is due to the unavailability of the degrees of freedom for the normal component and normal derivative of the function on the boundary in the discrete space.}
It is well know that different strategies can be considered
for imposing mixed conditions, including slip conditions, such as, Lagrange multiplier method.
However, we here propose a symmetric discrete variational formulation
by adding Nitsche terms in order to incorporate the boundary conditions
considered in the model problem. Moreover, we discretize
the problem, by using the virtual element method presented in \cite{BLVm2an17}
for the Stokes problem. {We define a new Nitsche term to impose the slip boundary condition for the discrete scheme using the piecewise polynomial projection on each polygon $K$ (in the mesh) on the boundary.} We establish that the discrete problem is well posed
by proving a global inf-sup condition and we write stability by using
appropriate mesh-parameter dependent norms. Under rather mild assumptions on the polygonal meshes
and by using interpolation estimates, the convergence rate is proved to
be optimal in terms of the mesh size $h$. {We would like to emphasis that the constants in the derivation of error estimates are independent of the physical parameters in this model problem. The trace inequality for the piecewise polynomials are utilised in the analysis in order to prove the stability.}
In summary, the advantages of the proposed Nitsche VEM method
for the Brinkman problem are, on the one hand,
the possibility to use general polygonal meshes,
and on the other hand, the possibility of an easy imposition
of general boundary conditions, including non homogeneous
Dirichlet conditions, slip conditions, among others.

The rest of the paper is organized as follows: in Section~\ref{sec:model}
we introduce the variational formulation of the Brinkman equations
with mixed boundary conditions.
In Section~\ref{sec:VEapprox} we present the virtual element discretization
of arbitrary order $k\ge2$ with Nitsche's technique.
The existence, uniqueness and stability results of
the discrete formulation by using a global inf-sup condition is presented
in Section~\ref{solvab}. In Section~\ref{sec:abstract-error-analysis} we obtain error estimates for the
velocity and pressure fields.
Section~\ref{sec:numerical-section}
is devoted to analyze, through several numerical experiments, the performance
and robustness of the proposed Nitsche method.

In this article, we will employ standard notations for Sobolev
spaces, norms and seminorms. In addition, we will denote by $C$ a
generic constant independent of the mesh parameter $h$ and model parameters,
which may take different values in different occurrences.

\section{Governing equations}\label{sec:model}

Let $\Omega$ be an open, bounded subset of $\mathbb{R}^2$ having Lipschitz–continuous boundary,
such that $\partial \Omega = \bar{\g}_D \cup \bar{\g}_N$, and $\g_D \cap \g_N = \emptyset$. {The boundary subdomain $\Gamma_D$ represents a part of $\partial\Omega$ where a fixed value for the velocity (Dirichlet boundary) is given, while $\Gamma_N$ denotes a subdomain where we have a slip boundary condition. An example of such domain is depicted in  Figure \ref{fig:brinkmann-domain}}.
Thus, the system of interest can be written as the following problem.
Given the body force $\ff \in [L^2(\Omega)]^2$ and boundary condition $\bg \in [H^{1/2}(\g_D)]^2$, find the fluid velocity $\bu$ and fluid pressure $p$, such that

\begin{align}
		\bbK^{-1}\bu - \nu \vdiv (\beps (\bu)) + \nabla p & = \ff & \text{in $\Omega$},\label{eq:Brinkman1}\\
		 \nabla \cdot \bu &= 0 & \text{in $\Omega$},\label{eq:Brinkman2}\\
		\bu &= \bg& \text{on $\g_D$}, \label{bc:Sigma} \\
		(\beps(\bu) \nn) \cdot \bt =0,  \quad \text{and} \quad \bu \cdot \nn &= 0 & \text{on $\g_N$}, \label{bc:Gamma}
	\end{align}
where $\nu$ is the viscosity of
the fluid, $\beps (\bv):=\frac{1}{2}(\nabla \bv+(\nabla \bv)^t)$ is the symmetric derivative,
$\mathbb{K}$ is a bounded, symmetric, and positive definite
tensor describing the permeability properties of the Brinkman region,
and {$\nn:=(n_1,n_2)^{\texttt{t}}$ and $\bt$ are unit normal and tangent on the boundary $\g_N$, respectively, with $\bt:=(-n_2,n_1)^{\texttt{t}}$}.

\begin{figure}[!hbt]\centering
\includegraphics[scale=1]{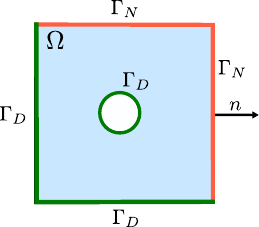}
\caption{Sample geometry of the considered Brinkman domain with mixed boundary conditions.}
\label{fig:brinkmann-domain}
\end{figure}


In order to obtain the weak form of the governing equations,
let us introduce the functional spaces for the velocity and pressure.
\begin{align*}
	\bV:= \{ \bv \in [H^1(\Omega)]^2: \bv \cdot \nn =0 \text{ on } \g_N, \bv = \cero \text{ on } \g_D\} \qquad \text{and} \qquad Q:= L^2_0(\Omega).
\end{align*}

We first state the assumptions on the physical parameters used throughout this paper.
The permeability tensor holds the positive definiteness
for all $\bv \in \bV$, that is there exist $\kappa_1, \kappa_2 >0$ such that
$$ 0 < \kappa_1 \| \bv \|_{0,\Omega}^2 \le (\mathbb{K}^{-1} \bv , \bv)_{0,\Omega} \le  \kappa_2 \| \bv \|_{0,\Omega}^2 < \infty,$$
and we also assume that $0 < \nu \le 1$.

Next, we test the corresponding equations in \eqref{eq:Brinkman1}-\eqref{eq:Brinkman2}
and using integration by parts and the fact that $\bv \in \bV$, we have
	\begin{align*}
		\int_{\Omega} \mathbb{K}^{-1} \bu \cdot \bv + \nu \int_{\Omega} \beps(\bu) :\beps(\bv) - \int_{\Omega} p \vdiv \bv -   \int_{\g_N} (\nu \beps(\bu) - p \mathbb{I}) \nn \cdot \bv \ds& = \int_{\Omega} \ff \cdot \bv\qquad\forall\bv\in\bV, &\\
		- \int_{\Omega} \vdiv \bu\,  q &= 0\qquad \qquad \quad \forall q\in Q. &
	\end{align*}
	{To impose the Dirichlet condition for the continuous solution $\bu$, we define the Sobolev space $$\bV_g:=\{ \bv \in\bV:\bv \cdot \nn =0 \text{ on } \g_N, \bv = \bg \text{ on } \g_D\}.$$}
\noindent The boundary term in the weak formulation for $\bu \in \bV_g$, $\bv \in \bV$
 and $p \in Q$ can be computed by splitting the terms
 in terms of tangential and normal components ($\bw= (\bw \cdot \nn) \nn + (\bw \cdot \bt) \bt$) as,
\begin{align*}
	\int_{\g_N} (\nu \beps(\bu) - p \mathbb{I}) \nn \cdot \bv \ds & = 
\int_{\g_N} \Big( (\nn^t(\nu \beps(\bu) \nn)) (\bv \cdot \nn) + (\bt^t(\nu \beps(\bu) \nn)) (\bv \cdot \bt) - p (\bv \cdot \nn) \Big) \ds \\
	& = \int_{\g_N}  (\bt^t(\nu \beps(\bu) \nn)) (\bv \cdot \bt)\ds=0,
\end{align*}
where we have also used the boundary condition $\eqref{bc:Gamma}$.

\noindent Thus, the weak formulation of the problem is stated as: find $\bu\in \bV_g$ and $p\in Q$
such that
\begin{subequations} \label{eq:weak}
	\begin{align} 
		m(\bu, \bv) + a(\bu, \bv) + b(\bv,p) & = F(\bv) & \quad \forall \bv \in \bV,  \label{eq:weak-u}\\
		b(\bu,q) &= 0 & \quad \forall q \in Q,  \label{eq:weak-p}
	\end{align}
\end{subequations}
where the corresponding bilinear forms and the linear functional are introduced as follows
\begin{gather*}
	m(\bu, \bv):= \int_{\Omega} \mathbb{K}^{-1} \bu \cdot \bv, \qquad  a(\bu, \bv) := \nu \int_{\Omega} \beps(\bu) :\beps(\bv), \\
	b(\bv,q):= - \int_{\Omega} q \vdiv \bv, \quad \text{and} \quad F(\bv) := \int_{\Omega} \ff \cdot \bv.
\end{gather*}

\noindent Now, we introduce the parameter dependent norm for all $\bv \in \bV$ as follows:
\begin{gather*}
	\vertiii{\bv}_{\bV}^2:= \|\mathbb{K}^{-1/2} \bv\|_{0,\Omega}^2 +  \nu \| \beps(\bv) \|_{0,\Omega}^2 + \| \vdiv \bv \|_{0,\Omega}^2.
\end{gather*}

\noindent Defining the kernel space as
$$\bZ:= \{ \bv \in \bV: b(\bv, q)=0 \quad \forall q \in Q \} = \{ \bv \in \bV: \vdiv \bv = 0 \},  $$
then we have the following properties of the bilinear forms:
\begin{gather*}
	m(\bu, \bv)  \le \nV{\bu}_{\bV}  \vertiii{\bv}_{\bV}, \qquad 	a(\bu, \bv)  \le \vertiii{\bu}_{\bV}  \vertiii{\bv}_{\bV} \quad \forall \bu,  \bv \in \bV, \\
	b(\bv, q)  \le \vertiii{\bv}_{\bV} \| q\|_Q \quad \forall \bv \in \bV, q \in Q, \qquad  F(\bv)  \le \| \ff \|_{\bV^*} \vertiii{\bv}_{\bV} \quad \forall \bv \in \bV, \\ 
	m(\bv, \bv) + a(\bv, \bv) \ge \nV{\bv}_{\bV}^2 \quad \forall \bv \in \bZ, \quad \text{and} \quad
	\sup_{(0 \neq)\bv \in \bV}\frac{b(\bv, q)}{\vertiii{\bv}_{\bV}}  \ge \beta \norm{q}_{Q} \qquad \forall q \in Q,
\end{gather*}
where $\norm{q}_{Q}:=\Vert q\Vert_{0,\Omega}$ and $\|\ff\|_{\bV^*}:= \sup_{\bv  \in \bV}\frac{F(\bv)}{\vertiii{\bv}_{\bV}}.$

Using the ellipticity of $m(\cdot, \cdot) + a(\cdot, \cdot)$ on $\bZ$,
and the inf-sup condition of $b(\cdot,\cdot)$, and continuity of the
bilinear forms of $m(\cdot, \cdot)$, $a(\cdot, \cdot)$, $b(\cdot, \cdot)$
and continuity of the linear functional $F(\cdot)$, we have the well posedness
of the weak formulation~\eqref{eq:weak} (refer  \cite{boffi13,brenner08}).

\begin{lemma}
	The continuous solution $(\bu, p) \in \bV_g \times Q$ of formulation
	\eqref{eq:weak} holds the following bound, for a constant $C$ (independent of $\nu,\, \bbK$),
	\begin{align} \label{est:cts}
		\vertiii{\bu}_{\bV} + \| p\|_Q \le C \|\ff\|_{\bV^*}.
	\end{align}
\end{lemma}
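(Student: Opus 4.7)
The strategy is to apply the classical Brezzi theorem for saddle-point problems, using the coercivity, inf-sup and continuity properties of $m$, $a$, $b$ and $F$ that are explicitly listed immediately before the lemma statement. The parameter-weighted norm $\vertiii{\cdot}_{\bV}$ has been designed so that the continuity constants of $m$, $a$ and $b$ are equal to $1$ and the coercivity constant of $m+a$ on $\bZ$ is also $1$ (the latter using $\vdiv\bv=0$ on $\bZ$, so that the $\|\vdiv\bv\|_{0,\Omega}^2$ contribution to the norm disappears). Consequently every constant in the abstract estimate will depend only on the inf-sup parameter $\beta$, which is the only source of non-triviality.

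First, I would eliminate the non-homogeneous Dirichlet datum by choosing a lifting $\bu_g\in\bV_g$ with $\vertiii{\bu_g}_{\bV}\le C_g\|\bg\|_{1/2,\g_D}$; such a lift exists via a standard $H^1$ trace extension respecting $\bv\cdot\nn=0$ on $\g_N$, with $C_g$ depending only on the geometry of $\Omega$ and the partition $\partial\Omega=\bar\g_D\cup\bar\g_N$. Setting $\bu_0:=\bu-\bu_g\in\bV$, the pair $(\bu_0,p)$ solves the homogeneous saddle-point problem
\begin{equation*}
m(\bu_0,\bv)+a(\bu_0,\bv)+b(\bv,p)=\widetilde F(\bv)\quad\forall\bv\in\bV,\qquad b(\bu_0,q)=-b(\bu_g,q)\quad\forall q\in Q,
\end{equation*}
with $\widetilde F(\bv):=F(\bv)-m(\bu_g,\bv)-a(\bu_g,\bv)$. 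Applying the continuity estimates listed in the paper gives $\|\widetilde F\|_{\bV^*}+\|b(\bu_g,\cdot)\|_{Q^*}\le \|\ff\|_{\bV^*}+C\vertiii{\bu_g}_{\bV}$.

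The Brezzi theorem (see \cite{boffi13,brenner08}) then yields
\begin{equation*}
\vertiii{\bu_0}_{\bV}+\|p\|_Q\le C(\beta)\bigl(\|\widetilde F\|_{\bV^*}+\|b(\bu_g,\cdot)\|_{Q^*}\bigr),
\end{equation*}
and the triangle inequality $\vertiii{\bu}_{\bV}\le\vertiii{\bu_0}_{\bV}+\vertiii{\bu_g}_{\bV}$ closes the argument, reproducing \eqref{est:cts} once the lifting contribution is absorbed into the generic constant. The main obstacle is really the verification that $\beta$ is genuinely independent of $\nu$ and $\bbK$, since the denominator $\vertiii{\bv}_{\bV}$ grows with $\sqrt{\nu}$ and with $\|\bbK^{-1/2}\|$ whereas $b$ itself is parameter-free, so a naive choice of test function would degrade $\beta$; the standard remedy is to pick $\bw$ solving $\vdiv\bw=q$ via a Bogovskii-type operator, so that $b(\bw,q)=-\|q\|_Q^2$ is balanced against the $\|\vdiv\bw\|_{0,\Omega}$ part of $\vertiii{\bw}_{\bV}$ rather than the parameter-weighted parts. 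Since the paper states this inf-sup as a given property, I would simply cite it and conclude.
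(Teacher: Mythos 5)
Your proposal follows essentially the same route as the paper, which establishes the lemma simply by invoking standard Babu\v{s}ka--Brezzi saddle-point theory with the continuity, kernel-coercivity and inf-sup properties listed just before the statement (all with parameter-free constants in the weighted norm $\vertiii{\cdot}_{\bV}$), citing \cite{boffi13,brenner08}. The only caveat is your claim that the Dirichlet lifting can be ``absorbed into the generic constant'': strictly speaking it leaves a $\|\bg\|_{1/2,\g_D}$-dependent contribution that is not controlled by $\|\ff\|_{\bV^*}$, but the paper's own statement glosses over the non-homogeneous datum in exactly the same way, so this is an imprecision inherited from the statement rather than a flaw in your argument.
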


\section{Virtual element approximation} \label{sec:VEapprox}

In this section we construct a VEM for solving the Brinkman problem with mixed
boundary conditions using Nitsche's technique. We start denoting by $\{{\mathcal T}_h\}_h$
a sequence of partitions into polygons of the domain~$\Omega$.
The elements in $\cT_h$ are denoted as $K$, and edges by $e$.
Let $h_K$ denote the diameter of the element $K$ and $h:=\max_{K\in{\mathcal T}_h}h_K$
the maximum of the diameters of all the elements of the mesh.
By~$N^v_K$ we  denote the number of vertices in the polygon~$K$,
$N^e_K$~stands for the number of edges on~$\partial K$,
and $e$~is a generic edge of $\mathcal{T}_h$.
We denote by~$\boldsymbol{n}_K^e$ the unit normal pointing outwards~$K$
and by  $\bt^e_K$ the unit tangent vector along~$e$ on~$K$,
and $V_i$~represents the $i^{th}$ vertex of the polygon~$K$.

Next, we denote the sets of all boundary edges as $\cE_h(\Omega)$ and denote
the edges on the Dirichlet boundary as $\cE_h^{D}:= \g_D \cap \cE_h(\Omega)$
and edges on the other boundary as $\cE_h^{N} := \g_N \cap \cE_h(\Omega)$.

In addition, for the theoretical analysis we will make the following assumptions:
there exists $C_{{\mathcal T}}>0$ such that, for every $h$ and every $K\in {\mathcal T}_h$,
(a) the ratio between the shortest edge
and $h_K$~is larger than $C_{{\mathcal T}}$; and (b) $K\in{\mathcal T}_h$
is star-shaped with respect to every point within a  ball of radius~$C_{{\mathcal T}}h_K$.

	In what follows, we denote by $\mathbb{P}_k(K)$ the space of
	polynomials of degree up to $k$, defined locally on $K\in\cT_h$.
Moreover, we denote by $\mathcal{G}_{k}(K):=\nabla(\mathbb{P}_{k+1}(K))\subseteq[\mathbb{P}_k(K)]^2$
and $\mathcal{G}_{k}^{\perp}(K)\subseteq[\mathbb{P}_{k}(K)]^{2}$ the $L^2$-orthogonal complement to
$\mathcal{G}_{k}(K)$.


\subsection{Discrete spaces and degrees of freedom} \label{subsec:VEspaces}

The local virtual element spaces are defined for $k \ge 2$ as,
\begin{align*}
	\tilde{\bV}_h^k(K):= \{ \bv \in [H^1(K) \cap C^0(\partial K)]^2: &- \bDelta \bv - \nabla s \in \mathcal{G}_{k-2}(K)^{\perp}(\text{for some } s \in L^2(K)), \quad \vdiv \bv \in \mathbb{P}_{k-1}(K), \\
	& \quad \bv|_{e} \in \mathbb{P}_k(e) \, \forall e \in \partial K\}, \\
	\quad Q_h^k(K) := \mathbb{P}_{k-1}(K). 
\end{align*}

Denote the dimension of local space $\tilde{\bV}_h^k(K)$
as $N_{\bV}^K$, and dimension of $Q_h^k(K)$ as $N_Q^K$.

The local degrees of freedom $( \text{dof}_i^{\bV},\, 1 \le i \le N_{\bV}^K)$
for space $\tilde{\bV}_h^k(K)$ for a generic $\bv_h\in\tilde{\bV}_h^k(K)$ are given by
\begin{itemize}
	\item the values of $\bv_h$ at the vertices of the polygon $K$,
	\item the values of $\bv_h$ at $k-1$ points in the interior of edges $e \in \partial K$,
	\item the moments of $\bv_h$
	$$\int_K \bv_h \cdot \bg_{k-2}^{\perp} \quad \text{ for all } \bg_{k-2}^{\perp} \in \mathcal{G}_{k-2}(K)^{\perp},$$
	\item for $k >1$, the moments of $\vdiv \bv_h$ in $K$,
	$$\int_K \vdiv \bv_h\, q_{k-1} \quad \text{ for all } q_{k-1} \in \mathbb{P}_{k-1}(K),$$
\end{itemize}
And for the space $Q_h^k(K)$, the degrees of freedom are 
\begin{itemize}
	\item the moments of $q_h$
	$$\int_K q_h \,q_{k-1} \quad \text{ for all } q_{k-1} \in \mathbb{P}_{k-1}(K).$$
\end{itemize}
\noindent Denoting the bilinear forms on each element as $a^K(\cdot, \cdot):= a(\cdot, \cdot)|_K$ for any bilinear form, we introduce the energy projection operators $\bPi^{\bnabla, k}_K: [H^1(K)]^2 \to [\mathbb{P}_{k}(K)]^2, \bPi^{\beps, k}_K: [H^1(K)]^2 \to [\mathbb{P}_{k}(K)]^2,$ and $L^2$ projection operator $ \bPi^{\cero,k}_K: [L^2(K)]^2 \to [\mathbb{P}_{k}(K)]^2$ for all $K\in \cT_h$ to define the computable bilinear forms,
\begin{align}
	\label{Def:Pi_0}& \quad (\bPi^{\cero,k}_K \bv - \bv, \br_k)_{0,K} =0  \qquad \forall \bv \in [L^2(K)]^2, \br_k \in [\mathbb{P}_{k}(K)]^2,\\ 
\label{Def:Pi_grad}	\forall \bv \in [H^1(K)]^2, &\begin{cases}
		(\bnabla ( \bPi^{\bnabla, k}_K \bv - \bv), \bnabla \br_k) = 0 \,\quad\forall \br_k \in [\mathbb{P}_{k}(K)]^2, \\ \bPi^{\cero,k}_K (\bPi^{\bnabla, k}_K \bv - \bv) =\cero
	\end{cases} \\
	 \label{Def:Pi_eps}
	\forall \bv \in [H^1(K)]^2, &\begin{cases}
		a^K( \bPi^{\beps, k}_K \bv - \bv, \br_k) = 0 \,\quad\forall \br_k \in [\mathbb{P}_{k}(K)]^2, \\ (\bPi^{\beps, k}_K \bv - \bv, \br_k)_{0, K} =\cero  \quad \forall \br_k \in \ker(a^K(\cdot,\cdot)).
	\end{cases}
\end{align}
Refer to \cite{vacca17}, we utilize the modified virtual
element space for flux with the help of energy projection \eqref{Def:Pi_grad} as,
\begin{align*}
	{\bV}_h^k(K):= \{ \bv \in [H^1(K) \cap C^0(\partial K)]^2: &- \bDelta \bv - \nabla s \in \mathcal{G}_{k}(K)^{\perp} \quad \text{for some } s \in L^2(K), \\
	& \quad \vdiv \bv \in \mathbb{P}_{k-1}(K),  \quad \bv|_{e} \in \mathbb{P}_k(e) \, \forall e \in \partial K, \\
	& \quad (\bPi^{\bnabla,k}_K \bw - \bw, \bg_{k\backslash k-2}^{\perp})_{0,K}=0 \quad \forall \, \bg_{k\backslash k-2}^{\perp} \in \mathcal{G}_{k}(K)^{\perp} \backslash \mathcal{G}_{k-2}(K)^{\perp} \}.
\end{align*}
The degrees of freedom for the space ${\bV}_h^k(K)$ are same as $\tilde{\bV}_h^k(K)$.

\begin{remark}
	The space ${\bV}_h^k(K)$ will be useful for computing
	the $L^2$-projection \eqref{Def:Pi_0} onto $\mathbb{P}_{k}(K)$ to approximate
	the zero order term presented in bilinear form $m(\cdot,\cdot)$.
\end{remark}

\begin{remark}
	The imposition of Dirichlet boundary condition using the interpolant is not enough with complex domains, and the slip boundary conditions are not easy to impose in numerical experiments. Thus, we proceed here with the Nitsche's technique for mixed boundary imposition.
\end{remark}

 We introduce the virtual element spaces to be used
 in the discretization of the Brinkman problem as:
\begin{align*}
	&\bV_h^k:= \{\bv_h \in [H^1(\Omega)]^2: \bv_h|_K \in 	\bV_h^k(K) \, \, \forall K \in \cT_h \}, \qquad   Q_h^k:= \{q_h \in Q: q_h|_K \in 	Q_h^k(K) \, \, \forall K \in \cT_h \}.
\end{align*}

Thus, the discrete formulation states that we seek the discrete velocity $\bu_h \in \bV_h^k$ and discrete fluid pressure $p_h \in Q_h^k$ such that the following holds for all $\bv_h \in 	\bV_h^k$ and $q_h \in Q_h^k$
\begin{align*}
	& \sum_{K \in \cT_h} \Big( m^K(\bPi^{\cero,k}_K \bu_h, \bPi^{\cero,k}_K \bv_h) + \mathcal{S}^{0}_K((\bI - \bPi^{0, k}_K)\bu_h, (\bI - \bPi^{0, k}_K)\bv_h)  \\	
	& \qquad \qquad  + a^K(\bPi^{\beps, k}_K  \bu_h, \bPi^{\beps, k}_K \bv_h) + \mathcal{S}^{\beps}_K((\bI - \bPi^{\beps, k}_K)\bu_h, (\bI - \bPi^{\beps, k}_K)\bv_h)  \Big) \\	
	& \qquad + \sum_{e \in \cE_h^D} \Big(  \gamma_{D}\int_e h_e^{-1}  \bu_h \cdot \bv_h \ds  -  \int_e (\nu \beps( {\Pi^{\beps,k}_{K_e}} \bu_h) \nn) \cdot  \bv_h \ds -  \int_e (\nu \beps( {\Pi^{\beps,k}_{K_e}} \bv_h) \nn) \cdot  \bu_h \ds \Big)  \\
	&  \qquad  + \sum_{e \in \cE_h^N} \Big( \gamma_{N} \int_e h_e^{-1} ( \bu_h \cdot \nn) (\bv_h \cdot \nn) \ds  - \int_e (\nn^t(\nu \beps( {\Pi^{\beps,k}_{K_e}} \bu_h) \nn)) (\bv_h \cdot \nn) \ds  - \int_e (\nn^t(\nu \beps( {\Pi^{\beps,k}_{K_e}} \bv_h) \nn)) (\bu_h \cdot \nn) \ds \Big) \\
	& \qquad + \sum_{K \in \cT_h} b^K(\bv_h, p_h) + \sum_{e \in \cE_h^D}\int_e (p_h \nn) \cdot \bv_h \ds + \sum_{e \in \cE_h^N}\int_e p_h (\bv_h \cdot \nn) \ds \\
	& =  \sum_{K \in \cT_h} (\bPi^{\cero,k-1}_K \ff, \bv_h)_{0,K}, \\	
	& \sum_{K \in \cT_h} b^K(\bu_h, q_h) + \sum_{e \in \cE_h^D}\int_e (q_h \nn) \cdot \bu_h \ds + \sum_{e \in \cE_h^N}\int_e q_h (\bu_h \cdot \nn) \ds  =  \sum_{e \in \cE_h^D} \int_e  \bg \cdot (q_h \nn)\ds,
\end{align*}
where $\Pi^{\beps,k}_{K_e}$ is the projection onto piecewise polynomial
on element $K_e$ having boundary containing the edge $e$.

Introduce the discrete bilinear forms as, for all $\bu_h, \bv_h \in \bV_h^k,\, q_h \in Q_h^k,$
\begin{align*}
	m_{h}(\bu_h, \bv_h) &:= \sum_{K \in \cT_h} \big( m^K(\bPi^{\cero,k}_K \bu_h, \bPi^{\cero,k}_K \bv_h) + \mathcal{S}^{0}_K((\bI - \bPi^{0, k}_K)\bu_h, (\bI - \bPi^{0, k}_K)\bv_h) \big), \\
	a_{h}(\bu_h, \bv_h) &:= \sum_{K \in \cT_h}a_{h}^K(\bu_h, \bv_h) +  \cN_{h}^{S, D}(\bu_h, \bv_h) + \cN_{h}^{B, D}(\bu_h, \bv_h) + \cN_{h}^{B, D}(\bv_h, \bu_h) \\
	& \qquad  + \cN_{h}^{S, N}(\bu_h, \bv_h) + \cN_{h}^{B, N}(\bu_h, \bv_h) + \cN_{h}^{B, N}(\bv_h, \bu_h), \\
	b_{h}(\bv_h, q_h) &:= \sum_{K \in \cT_h} b^K(\bv_h, q_h) +  \cN_{h}^{b, D}(\bv_h, q_h) + \cN_{h}^{b, N}(\bv_h, q_h), \\
	F_h(\bv_h) &:= \langle \ff_h, \bv_h \rangle + \gamma_{D} \sum_{e \in \cE_h^D} \int_e h_e^{-1}  \bg \cdot \bv_h \ds - \sum_{e \in \cE_h^D} \int_e \bg \cdot (\nu \beps( {\Pi^{\beps,k}_{K_e}} \bv_h) \nn) \ds, \\
	\text{and } \quad G(q_h) &:=  \sum_{e \in \cE_h^D} \int_e  \bg \cdot (q_h \nn)\ds,
\end{align*}
$\text{where } \, a_{h}^K(\bu_h, \bv_h):= a^K(\bPi^{\beps, k}_K  \bu_h, \bPi^{\beps, k}_K \bv_h) + \mathcal{S}^{\beps}_K((\bI - \bPi^{\beps, k}_K)\bu_h, (\bI - \bPi^{\beps, k}_K)\bv_h),$ and  $\ff_h|_K:= \bPi^{\cero,k-1}_K \ff.$

On the one hand, we note that the stabilization terms $\mathcal{S}^{\beps}_K(\cdot,\cdot), \mathcal{S}^{0}_K(\cdot,\cdot)$ are defined, for all $\bv_h \in \bV_h^k(K),$ so that we have the stability with respective bilinear forms, for constants $\zeta_1, \zeta_2, \xi_1, \xi_2 >0$, independent of $h$ and any physical parameters,
\begin{equation}\label{ghjtnb}
	\zeta_1 \, a^K(\bv_h,\bv_h) \le \mathcal{S}^{\beps}_K(\bv_h, \bv_h) \le \zeta_2 \, a^K(\bv_h,\bv_h), \,\, \text{and} \,\,
	\xi_1 m^K(\bv_h,\bv_h) \le \mathcal{S}^0_K(\bv_h, \bv_h) \le \xi_2 \,m^K(\bv_h,\bv_h).
\end{equation}

\noindent On the other hand, the Nitsche's stabilization terms with parameter $\gamma_{D}, \gamma_{N}>0$, $\forall \bv_h \in \bV_h^k$, $q_h \in Q_h^k$ as
\begin{gather*}
	\cN_{h}^{S, D}(\bu_h, \bv_h) := \gamma_{D} \sum_{e \in \cE_h^D} \int_e h_e^{-1}  \bu_h \cdot \bv_h \ds, \qquad 
	\cN_{h}^{B, D}(\bu_h, \bv_h) := - \sum_{e \in \cE_h^D} \int_e (\nu \beps( {\Pi^{\beps,k}_{K_e}} \bu_h) \nn) \cdot  \bv_h \ds, \\
	\cN_{h}^{b, D}(\bv_h, q_h) := \sum_{e \in \cE_h^D}\int_e (q_h \nn) \cdot \bv_h \ds, \qquad
	\cN_{h}^{S, N}(\bu_h, \bv_h) := \gamma_{N} \sum_{e \in \cE_h^N}\int_e h_e^{-1} ( \bu_h \cdot \nn) (\bv_h \cdot \nn) \ds, \\
	\cN_{h}^{B, N}(\bu_h, \bv_h) := - \sum_{e \in \cE_h^N}\int_e (\nn^t(\nu \beps( {\Pi^{\beps,k}_{K_e}} \bu_h) \nn)) (\bv_h \cdot \nn) \ds,  \quad
	\text{and} \quad \cN_{h}^{b, N}(\bv_h,q_h) :=  \sum_{e \in \cE_h^N}\int_e q_h (\bv_h \cdot \nn) \ds.
\end{gather*}

Now, we are in a position to introduce the virtual element formulation
with Nitsche stabilization for the Brinkman problem with mixed boundary conditions,
as follows: seek $\bu_h \in \bV_h^k,$ and $p_h \in Q_h^k$ such that
\begin{subequations} \label{eq:weak-h}
	\begin{align}
	m_h (\bu_h, \bv_h) + a_{h}(\bu_h, \bv_h) + b_h(\bv_h, p_h)  & =  F_h(\bv_h) & \quad \forall \bv_h \in \bV_h^k, \label{eq:weak-h-u}	\\
	b_h(\bu_h, q_h)   & = G(q_h) & \quad \forall  q_h \in Q_h^k.	\label{eq:weak-h-p}
	\end{align}
\end{subequations}
Denoting 
\begin{align*}
	\cA_h ((\bu_h, p_h), (\bv_h, q_h)) &:= m_h (\bu_h, \bv_h) + a_{h}(\bu_h, \bv_h) + b_h(\bv_h, p_h) - b_h(\bu_h, q_h), \\
	\cL_h((\bv_h, q_h)) &¨:= F_h(\bv_h) - G(q_h),
\end{align*}
then we can rewrite the discrete formulation \eqref{eq:weak-h-u}-\eqref{eq:weak-h-p}, in vector form as:
find $(\bu_h, p_h) \in \bbV_h:= \bV_h^k \times Q_h^k$ such that
\begin{align}  \label{eq:weak-h-vec}
	\cA_h ((\bu_h, p_h), (\bv_h, q_h)) =  \cL_h((\bv_h, q_h))  \qquad \forall (\bv_h, q_h) \in \bbV_h.
\end{align}

\begin{remark}
	The Nitsche stabilization terms in our formulation~\eqref{eq:weak-h},
	such as, $\cN_{h}^{S, D}(\cdot, \cdot)$ and $\cN_{h}^{S, N}(\cdot, \cdot)$
	are used to impose the Dirichlet and slip boundary conditions, respectively,
	for the discrete solution. The other terms in the formulation, such as,
	$\cN_{h}^{B, D}(\bu_h, \bv_h), \, \cN_{h}^{B, N}(\bu_h, \bv_h), \, \cN_{h}^{b, D}(\bv_h, p_h), \cN_{h}^{b, N}(\bv_h, p_h)$ appears naturally while the terms $\cN_{h}^{B, D}(\bv_h, \bu_h),$ $ \cN_{h}^{B, N}(\bv_h, \bu_h),$ $\cN_{h}^{b, D}(\bu_h, q_h), \cN_{h}^{b, N}(\bu_h, q_h)$ are added to retain the symmetry in the discrete formulation.
\end{remark}

\section{Solvability of the VEM with Nitsche stabilized terms}\label{solvab}

In this section, we are going to prove the well-posedness of our discrete formulation.
First, we state below the preliminary results used further in analysis.
{
We recall the classical trace and inverse inequalities for piecewise polynomials.
\begin{lemma}[Discrete trace inequality] \label{lem:dis-tr}
	There exists a constant $C_t$ such that for all $w_h \in \bbP_k(K)$ and $K \in 
	\cT_h$ such that
	\begin{align*}
		\| w_h \|_{0, \partial K} \le C_t  (h_K^{-1/2}\| w_h \|_{0, K} + h_K^{1/2}\| \nabla w_h \|_{0, K}).
	\end{align*}
\end{lemma}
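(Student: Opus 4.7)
The plan is to establish the inequality by a direct divergence-theorem argument on $K$, exploiting the mesh regularity assumptions (specifically, that $K$ is star-shaped with respect to a ball of radius $C_{\mathcal{T}} h_K$). This avoids the need for a reference element, which is preferable in the polytopal setting where no canonical reference polygon is available. I remark that the statement is actually valid for arbitrary $H^1(K)$ functions; the polynomial structure is not needed.

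First, I would fix a point $\bx_K \in K$ that is the center of a ball of radius $C_{\mathcal{T}} h_K$ with respect to which $K$ is star-shaped. The star-shapedness then gives a uniform lower bound $(\bx - \bx_K) \cdot \nn \ge c\, h_K$ on $\partial K$, for a constant $c$ depending only on $C_{\mathcal{T}}$. Next, I would apply the divergence theorem to the vector field $w_h^2(\bx - \bx_K)$, obtaining
\begin{equation*}
\int_{\partial K} w_h^2\, (\bx - \bx_K) \cdot \nn\, \ds
= \int_K \bigl( 2\, w_h\, \nabla w_h \cdot (\bx - \bx_K) + 2\, w_h^2 \bigr) \dx.
\end{equation*}
Using the lower bound on the left-hand side and the estimate $|\bx - \bx_K| \le C h_K$ on the right-hand side, followed by Cauchy--Schwarz, yields
\begin{equation*}
c\, h_K\, \|w_h\|_{0,\partial K}^2 \le C\, h_K\, \|w_h\|_{0,K}\, \|\nabla w_h\|_{0,K} + 2\, \|w_h\|_{0,K}^2.
\end{equation*}

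The final step is to apply Young's inequality of the form $\|w_h\|_{0,K}\, \|\nabla w_h\|_{0,K} \le \tfrac{1}{2}(h_K^{-1}\|w_h\|_{0,K}^2 + h_K \|\nabla w_h\|_{0,K}^2)$, which after dividing by $c\, h_K$ and taking square roots delivers the claimed bound with $C_t$ depending only on $C_{\mathcal{T}}$ and the spatial dimension. The main obstacle, though minor, is in correctly tracking the dependence of $c$ on the star-shapedness constant $C_{\mathcal{T}}$; since mesh regularity is a standing assumption throughout the paper, this dependence is absorbed into the generic constant $C_t$ that is independent of $h_K$ and of the polynomial degree $k$ (indeed, no inverse-type degree dependence enters because we only use calculus identities, not a finite-dimensional equivalence of norms).
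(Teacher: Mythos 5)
Your argument is correct, and it is worth noting that the paper itself offers no proof of this lemma: it is simply recalled as a classical trace inequality for piecewise polynomials (with the companion inverse estimate, Lemma~\ref{lem:dis-inv}, cited from the literature when both are used later). Your divergence-theorem route is the standard self-contained way to establish it on star-shaped polytopes: the mesh assumption (b) gives a point $\bx_K$ with $(\bx-\bx_K)\cdot\nn \ge C_{\mathcal T} h_K$ a.e.\ on $\partial K$ (take $\by=\bx_K+C_{\mathcal T}h_K\,\nn(\bx)$ in the star-shapedness condition), the identity $\int_{\partial K} w_h^2\,(\bx-\bx_K)\cdot\nn\,\ds=\int_K\bigl(2w_h\,\nabla w_h\cdot(\bx-\bx_K)+2w_h^2\bigr)\dx$ is exact in two dimensions since $\nabla\cdot(\bx-\bx_K)=2$, and the bound $|\bx-\bx_K|\le h_K$ together with Cauchy--Schwarz and Young's inequality in the form you state yields the claim after dividing by $C_{\mathcal T}h_K$ and using $\sqrt{a+b}\le\sqrt a+\sqrt b$. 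Your side remark is also accurate and worth keeping in mind: this particular inequality (with the $h_K^{1/2}\|\nabla w_h\|_{0,K}$ term present) holds for all of $H^1(K)$ and carries no dependence on the polynomial degree $k$; the polynomial structure and the attendant inverse estimate of Lemma~\ref{lem:dis-inv} only enter when one wants the purely $L^2$-based form $\|w_h\|_{0,\partial K}\le C h_K^{-1/2}\|w_h\|_{0,K}$ used in Lemma~\ref{lem:discrete_trace}. The only point to state explicitly, which you gloss over slightly, is the geometric fact that star-shapedness with respect to the whole ball (not just its center) is what gives the uniform lower bound $(\bx-\bx_K)\cdot\nn\ge C_{\mathcal T}h_K$; with that spelled out, the proof is complete.
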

\begin{lemma}[Discrete inverse inequality] \label{lem:dis-inv}
	There exists a constant $C_I$ such that for all $w_h \in \bbP_k(K)$ and $K \in 
	\cT_h$ such that
	\begin{align*}
		\| \nabla w_h \|_{0,K} \le C_I  h_K^{-1}\| w_h \|_{0, K}.
	\end{align*}
\end{lemma}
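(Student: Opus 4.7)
The plan is to prove the discrete inverse inequality by the classical scaling argument combined with norm equivalence on the finite-dimensional space $\bbP_k$, using the mesh-regularity assumptions (a) and (b) stated earlier to ensure the constant $C_I$ is uniform in $K$.

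First, I would introduce the affine transformation $F_K : \hat K \to K$, $F_K(\hat x) = x_K + h_K \hat x$, where $x_K$ is the centre of the ball with respect to which $K$ is star-shaped. The image $\hat K = F_K^{-1}(K)$ has diameter at most $1$ and, by assumption (b), contains the ball $\hat B := B(0, C_{\mathcal T})$; moreover $\hat K \subset \hat B_{\mathrm{out}} := B(0, 1)$ since $x_K \in K$ and $K$ has diameter $h_K$. Setting $\hat w(\hat x) := w_h(F_K(\hat x))$, which is again in $\bbP_k$, the standard Jacobian identities in two dimensions give $\|w_h\|_{0,K}^2 = h_K^2 \|\hat w\|_{0,\hat K}^2$ and $\|\nabla w_h\|_{0,K}^2 = \|\hat\nabla \hat w\|_{0,\hat K}^2$. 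The claim is therefore equivalent to the scale-free inequality $\|\hat\nabla \hat w\|_{0,\hat K} \le C_I \|\hat w\|_{0,\hat K}$.

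Since polynomials extend canonically to all of $\RR^2$, I would sandwich $\hat K$ between the two balls and work there, where translation-scale invariance is available. Trivially $\|\hat\nabla \hat w\|_{0,\hat K} \le \|\hat\nabla \hat w\|_{0,\hat B_{\mathrm{out}}}$ and $\|\hat w\|_{0,\hat B} \le \|\hat w\|_{0,\hat K}$. The classical inverse inequality on a ball yields $\|\hat\nabla \hat w\|_{0,\hat B_{\mathrm{out}}} \le C_1(k) \|\hat w\|_{0,\hat B_{\mathrm{out}}}$, while $p \mapsto \|p\|_{0,\hat B_{\mathrm{out}}}$ and $p \mapsto \|p\|_{0,\hat B}$ are both norms on the finite-dimensional space $\bbP_k$ and hence equivalent with a constant $C_2$ depending only on $C_{\mathcal T}$ and $k$. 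Chaining these three bounds gives $\|\hat\nabla \hat w\|_{0,\hat K} \le C_1 C_2 \|\hat w\|_{0,\hat K}$, and undoing the scaling via $F_K$ reintroduces the desired $h_K^{-1}$ factor.

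The main obstacle is the shape-independence of $C_I$. Affine scaling removes the $h_K$ dependence automatically, but without the star-shapedness assumption (b) the scaled element $\hat K$ could degenerate and the norm-equivalence step would lose uniformity. Assumption (b) is therefore essential: it guarantees an inscribed ball of fixed radius $C_{\mathcal T}$ inside $\hat K$, so that the $L^2$ mass of any polynomial on $\hat K$ cannot become arbitrarily small relative to its mass on $\hat B_{\mathrm{out}}$, which is exactly what makes $C_2$ a function of $C_{\mathcal T}$ and $k$ alone.
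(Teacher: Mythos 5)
Your argument is correct. The paper itself does not prove this lemma: it is simply recalled as a classical inverse inequality for polynomials (with the discrete trace/inverse estimates later attributed to the literature in the proof of Lemma~\ref{lem:discrete_trace}), so there is no in-paper proof to compare against. Your scaling argument is the standard one and is sound: the affine map $F_K$ with Jacobian $h_K^2$ correctly turns the claim into a scale-free inequality on $\hat K$, and sandwiching $\hat K$ between the fixed balls $B(0,C_{\mathcal T})$ and $B(0,1)$ — which is exactly where the star-shapedness assumption (b) enters — lets you combine the inverse inequality on the unit ball with finite-dimensional norm equivalence on $\bbP_k$ to obtain a constant depending only on $k$, the dimension and $C_{\mathcal T}$, uniformly over all elements and mesh sizes, which is what the paper needs when it later applies the lemma elementwise (also componentwise for vector-valued polynomials such as $\beps(\Pi^{\beps,k}_{K}\bv_h)$).
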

}

\begin{lemma} \label{lem:discrete_trace}
	The following inequalities hold, with $C_{\text{tr}}$ and $C_T$ independent of $h$,
	\begin{align*}
		\| q_h \|_{h,D} := \Big( \sum_{e \in \cE_h^{D}} h_e \| q_h \|_{0, e}^2 \Big)^{1/2} & \le C_{\text{tr}}  \| q_h \|_{0, \Omega} \qquad \forall q_h \in Q_h^k, \\
		\Big( \sum_{e \in \cE_h^{D}} h_e\| \beps({\Pi^{\beps,k}_{K_e}} \bv_h) \nn \|_{0,e}^2 \Big)^{1/2} & \le C_T  \| \beps( \bv_h) \|_{0, \Omega} \qquad \forall \bv_h \in \bV_h^k.
	\end{align*}
and same holds on the boundary $\g$.
\end{lemma}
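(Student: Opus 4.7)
The plan is to reduce both bounds to the classical element-wise discrete trace and inverse inequalities (Lemmas \ref{lem:dis-tr} and \ref{lem:dis-inv}), applied edge-by-edge to piecewise polynomial quantities, and then to sum over the boundary edges exploiting the mesh regularity assumption (a) from Section \ref{sec:VEapprox}, which gives $h_e \sim h_{K_e}$ for every boundary edge $e$ with unique adjacent element $K_e$.

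For the first inequality, I would fix $e \in \cE_h^D$ and observe that $q_h|_{K_e} \in \mathbb{P}_{k-1}(K_e)$. Applying Lemma \ref{lem:dis-tr} componentwise and then Lemma \ref{lem:dis-inv} to absorb the gradient term, I obtain
\begin{equation*}
\| q_h \|_{0,e}^2 \; \leq \; 2 C_t^2 \bigl( h_{K_e}^{-1}\| q_h\|_{0,K_e}^2 + h_{K_e} \| \nabla q_h\|_{0,K_e}^2 \bigr) \; \leq \; 2 C_t^2 (1 + C_I^2)\, h_{K_e}^{-1} \| q_h\|_{0,K_e}^2.
\end{equation*}
Multiplying by $h_e$, using $h_e \leq C\, h_{K_e}$ from mesh regularity, and summing over $e \in \cE_h^D$ (each element $K$ contributing a bounded number of edges because of assumption (a)) gives the desired bound with an explicit $C_{\text{tr}}$ depending only on $C_t$, $C_I$ and $C_{\mathcal{T}}$.

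For the second inequality I follow the same pattern applied to the polynomial $\beps(\Pi^{\beps,k}_{K_e} \bv_h) \nn$ on $e$. Since $\Pi^{\beps,k}_{K_e} \bv_h \in [\mathbb{P}_k(K_e)]^2$, its symmetric gradient lies in $[\mathbb{P}_{k-1}(K_e)]^{2\times 2}$, so Lemmas \ref{lem:dis-tr}--\ref{lem:dis-inv} apply componentwise and yield $h_e\| \beps(\Pi^{\beps,k}_{K_e} \bv_h) \nn\|_{0,e}^2 \leq C\, \|\beps(\Pi^{\beps,k}_{K_e} \bv_h)\|_{0,K_e}^2$. The final step is the stability of the energy projection in its own norm: testing the defining identity \eqref{Def:Pi_eps} with $\br_k = \Pi^{\beps,k}_{K_e} \bv_h$ and using Cauchy--Schwarz gives $\|\beps(\Pi^{\beps,k}_{K_e} \bv_h)\|_{0,K_e} \leq \|\beps(\bv_h)\|_{0,K_e}$. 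Summing over $e \in \cE_h^D$ concludes the proof; the same argument, run over $\cE_h(\Omega)$, yields the analogous bound on the full boundary $\g$.

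No step is genuinely difficult, but the only mild technical point to watch is the bookkeeping on edge-to-element multiplicities when summing: mesh assumption (a) bounds the number of edges per element uniformly, so the sum $\sum_e \|\cdot\|_{0,K_e}^2$ is controlled by a constant multiple of $\sum_K \|\cdot\|_{0,K}^2 \leq \|\cdot\|_{0,\Omega}^2$. Vector/tensor-valued extensions of the scalar lemmas are obtained componentwise, which contributes only dimension-dependent constants.
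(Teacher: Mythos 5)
Your proposal is correct and follows essentially the same route as the paper: element-wise discrete trace plus inverse inequalities combined with mesh regularity for the boundary-edge sums, and the $L^2(K)$-stability of $\Pi^{\beps,k}_{K}$ in the $\beps$-seminorm to pass from the projected to the original function. The only cosmetic difference is that you derive that stability explicitly (with constant $1$) by testing \eqref{Def:Pi_eps} with $\br_k=\Pi^{\beps,k}_{K}\bv_h$, whereas the paper simply invokes the continuity of the projection with a constant $C_{\Pi}$.
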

{\begin{proof}
	Use of Lemma \ref{lem:dis-tr} and Lemma \ref{lem:dis-inv} and for $C_{\text{tr}}:=C_t C_I$, there holds
	\begin{align*}
		\| q_h \|_{0, \partial K} \le C_{\text{tr}} h_K^{-1/2} \| q_h\|_{0,K} \quad \forall q_h \in Q_h^k.
	\end{align*}
	Using the above inequality, we achieve
	\begin{align*}
		\| q_h\|_{h,D} & = \left( \sum_{e \in \cE_h^D} h_e \| q_h \|_{0,e}^2 \right)^{1/2 }  \le \left( \sum_{K \in \cT_h} h_K \| q_h \|_{0,\partial K}^2 \right)^{1/2}  \le C_{\text{tr}} \left( \sum_{K \in \cT_h} \| q_h \|_{0,K}^2 \right)^{1/2} = C_{\text{tr}} \| q_h \|_{0,\Omega}.
	\end{align*}
	Application of the discrete trace and inverse inequalities \cite[Section $5.2$]{massing_cmame18} for the piecewise polynomials in vector form leads to
	\begin{align*}
		\Big( \sum_{e \in \cE_h^{D}} h_e\| \beps({\Pi^{\beps,k}_{K_e}} \bv_h) \nn \|_{0,e}^2 \Big)^{1/2} & \le \left( \sum_{K \in \cT_h} h_K  \| \beps({\Pi^{\beps,k}_{K}} \bv_h) \nn \|_{0,\partial K}^2  \right)^{1/2} 
		 \le \tilde{C}_{\text{tr}} \left( \sum_{K \in \cT_h}  \| \beps (\Pi^{\beps,k}_{K} \bv_h)\|_{0,K}^2 \right)^{1/2}.
	\end{align*}
	 The continuity of $\Pi^{\beps,k}_{K}$ projection by definition \eqref{Def:Pi_eps}, that is $\| \beps (\Pi^{\beps,k}_{K}  \bv_h)\|_{0,K}  \le C_{\Pi} \| \beps (\bv_h)\|_{0,K}$ for all $K \in \cT_h$, and thus, we conclude the inequality with $C_T:=\tilde{C}_{\text{tr}} C_{\Pi}$.
\end{proof}}

\noindent Next, we introduce the following discrete norms on boundary parts.
\begin{align*}
	\| \bv_h \|_{1/2,h,D}^2 := \sum_{e \in \cE_h^{D}} h_e^{-1} \| \bv_h \|_{0,e}^2, \qquad \text{ and } \qquad \| \bv_h \|_{1/2,h,N}^2 := \sum_{e \in \cE_h^{N}} h_e^{-1} \| \bv_h \cdot \nn \|_{0,e}^2.
\end{align*}

\noindent Now, we have that the discrete forms satisfy the following properties.
\begin{lemma} \label{cor:boundary}
	The boundary terms are bounded, for all $\bu_h,\bv_h \in \bV_h^k$, $q_h \in Q_h^k$, as follows:
	\begin{align*}
		\cN_{h}^{S,D}(\bu_h, \bv_h) & \le \gamma_{D} \| \bu_h \|_{1/2,h,D} \| \bv_h \|_{1/2,h,D}, \\
		\cN_{h}^{B, D}(\bu_h, \bv_h) &\le C_T \nu \| \beps(\bu_h) \|_{0, \Omega} \| \bv_h \|_{1/2,h,D}, \\
		\cN_{h}^{b, D}(\bv_h, q_h) & \le C_{\text{tr}} \| q_h \|_{0, \Omega} \| \bv_h \|_{1/2,h,D}, \\
		\cN_{h}^{S, N}(\bu_h, \bv_h) & \le \gamma_{N} \| \bu_h \|_{1/2,h,N} \| \bv_h \|_{1/2,h,N}, \\
		\cN_{h}^{B, N}(\bu_h, \bv_h) & \le  C_T \,\nu \| \beps(\bu_h)\|_{0,\Omega} \|\bv_h \|_{1/2,h, N}, \\
         \cN_{h}^{b, N}(\bv_h,q_h) & \le C_{\text{tr}} \| q_h \|_{0, \Omega} \| \bv_h \|_{1/2,h,N}.
	\end{align*}
\end{lemma}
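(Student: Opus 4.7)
The plan is to handle each of the six inequalities with a common recipe: (i) apply Cauchy--Schwarz edgewise to split each integrand into two factors; (ii) insert a neutral weight of the form $h_e^{1/2}\cdot h_e^{-1/2}$ so that one factor matches the definition of one of the discrete boundary norms $\|\cdot\|_{1/2,h,D}$, $\|\cdot\|_{1/2,h,N}$, or $\|\cdot\|_{h,D}$; (iii) apply Cauchy--Schwarz over the set of boundary edges to convert the sum of products of edge-wise integrals into the product of two weighted $\ell^2$-sums; (iv) dispose of the remaining factor either by reading off the correct discrete norm or by invoking Lemma \ref{lem:discrete_trace}. The Dirichlet and slip bounds are parallel: since the slip terms only touch the scalar boundary quantity $\bv_h\cdot\nn$, replacing $\bv_h$ by $\bv_h\cdot\nn$ and $\cE_h^D$ by $\cE_h^N$ in the arguments for the Dirichlet case gives the slip case verbatim.

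For the penalty-type terms $\cN_{h}^{S,D}$ and $\cN_{h}^{S,N}$, no trace inequality is needed. Splitting $h_e^{-1}=h_e^{-1/2}\cdot h_e^{-1/2}$ and applying Cauchy--Schwarz on each edge followed by Cauchy--Schwarz on the edge-sum gives immediately the products $\gamma_D\|\bu_h\|_{1/2,h,D}\|\bv_h\|_{1/2,h,D}$ and $\gamma_N\|\bu_h\|_{1/2,h,N}\|\bv_h\|_{1/2,h,N}$. For the pressure--velocity coupling terms $\cN_{h}^{b,D}$ and $\cN_{h}^{b,N}$, the same bookkeeping leaves, after Cauchy--Schwarz, one factor $\bigl(\sum_e h_e\|q_h\|_{0,e}^2\bigr)^{1/2}=\|q_h\|_{h,D}$ (or its analogue on $\g_N$) and one factor $\|\bv_h\|_{1/2,h,D}$ (resp.\ $\|\bv_h\|_{1/2,h,N}$); Lemma \ref{lem:discrete_trace} then controls the first factor by $C_{\text{tr}}\|q_h\|_{0,\Omega}$.

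The only place that actually uses nontrivial machinery is the pair of consistency terms $\cN_{h}^{B,D}$ and $\cN_{h}^{B,N}$. After the edgewise Cauchy--Schwarz and the insertion of $h_e^{\pm 1/2}$ weights, one is left with
\[
\nu\,\Bigl(\sum_{e\in\cE_h^D} h_e\,\|\beps(\Pi^{\beps,k}_{K_e}\bu_h)\,\nn\|_{0,e}^2\Bigr)^{1/2}\!\cdot\|\bv_h\|_{1/2,h,D},
\]
and the first factor is exactly the quantity bounded in the second part of Lemma \ref{lem:discrete_trace} by $C_T\,\nu\,\|\beps(\bu_h)\|_{0,\Omega}$, which combines the discrete trace/inverse inequalities of Lemmas \ref{lem:dis-tr}--\ref{lem:dis-inv} with the continuity of the projection $\Pi^{\beps,k}_K$. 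The slip variant $\cN_{h}^{B,N}$ follows identically after replacing $\bv_h$ by $\bv_h\cdot\nn$ on the boundary factor. Since each step is either a Hölder/Cauchy--Schwarz application or a direct appeal to Lemma \ref{lem:discrete_trace}, no substantive obstacle is expected; the only care required is bookkeeping with the weights $h_e^{\pm 1/2}$ and making sure, in the slip case, that the scalar component $\bv_h\cdot\nn$ generates $\|\bv_h\|_{1/2,h,N}$ instead of the stronger full edge $L^2$-norm.
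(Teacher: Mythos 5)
Your proposal is correct and follows essentially the same route as the paper: edgewise Cauchy--Schwarz with the $h_e^{\pm1/2}$ weight splitting, the discrete norms read off directly for the penalty terms, and Lemma~\ref{lem:discrete_trace} invoked for both the pressure factors and the projected-strain factors (with the slip case obtained by replacing $\bv_h$ with $\bv_h\cdot\nn$). No gaps; this matches the paper's argument step for step.
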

\begin{proof}
	We start with the first bound. The Cauchy-Schwarz inequality yields
	\begin{align*}
		\cN_{h}^{S,D}(\bu_h, \bv_h) \le \gamma_{D}  \sum_{e \in \cE_h^D} \Big( \int_e h_e^{-1}\bu_h^2 \Big)^{1/2} \Big( \int_e h_e^{-1} \bv_h^2 \Big)^{1/2} \le  \gamma_{D} \| \bu_h \|_{1/2,h,D} \| \bv_h \|_{1/2,h,D}.
	\end{align*}
	Next, by using the Cauchy-Schwarz inequality and the use of Lemma~\ref{lem:discrete_trace} leads to
	\begin{align*}
		\cN_{h}^{B, D}(\bu_h, \bv_h) & \le \sum_{e \in \cE_h^B}\nu h_e^{1/2} \| \beps({\Pi^{\beps,k}_{K_e}} \bu_h) \nn \|_{0,e} h_e^{-1/2} \| \bv_h \|_{0,e} \\
		& \le  C_T \nu\| \beps(\bu_h) \|_{0, \Omega} \big( \sum_{e \in \cE_h^B} h_e^{-1} \| \bv_h \|_{0,e}^2 \big)^{1/2}.
	\end{align*}
Proceeding, in similar manner and using that $\|\nn \|_{\infty,e} \le 1$, we get the bound
\begin{align*}
	\cN_{h}^{b, D}(\bv_h, q_h) & \le \sum_{e \in \cE_h^D} h_e^{1/2} \| q_h \|_{0, e} \|\nn \|_{\infty,e} h_e^{-1/2}\|\bv_h\|_{0, e} \le C_{\text{tr}} \| q_h \|_{0, \Omega} \big( \sum_{e \in \cE_h^D} h_e^{-1} \| \bv_h \|_{0,e}^2 \big)^{1/2}.
\end{align*}
On boundary $\g_N$, we have 
\begin{align*}
	\cN_{h}^{S, N}(\bu_h, \bv_h) \le \gamma_{N}  \sum_{e \in \cE_h^N} \Big( \int_e h_e^{-1} ( \bu_h \cdot \nn)^2 \Big)^{1/2} \Big( \int_e h_e^{-1} ( \bv_h \cdot \nn)^2 \Big)^{1/2} \le  \gamma_{N} \| \bu_h \|_{1/2,h,N} \| \bv_h \|_{1/2,h,N}.
\end{align*}
Next, as before, by using the bound of unit normal $\|\nn \|_{\infty,e} \le 1$
and use of Lemma~\ref{lem:discrete_trace}, we have
\begin{align*}
	\cN_{h}^{B, N}(\bu_h, \bv_h) \le \sum_{e \in \cE_h^N} \nu \|\nn \|_{\infty,e}  h_e^{1/2}\| \beps( {\Pi^{\beps,k}_{K_e}} \bu_h) \nn\|_{0,e} h_e^{-1/2} \|\bv_h \cdot \nn\|_{0,e} \le \nu C_T \| \beps(\bu_h)\|_{0,\Omega} \|\bv_h \|_{1/2,h, N}.
\end{align*}
Using again the previous arguments gives
\begin{align*}
	\cN_{h}^{b, N}(\bv_h,q_h) \le \sum_{e \in \cE_h^N} h_e^{1/2} \| q_h \|_{0, e} h_e^{-1/2}\|\bv_h \cdot \nn\|_{0, e} \le C_{\text{tr}} \| q_h \|_{0, \Omega} \big( \sum_{e \in \cE_h^N} h_e^{-1} \| \bv_h \cdot \nn\|_{0,e}^2 \big)^{1/2}.
\end{align*}
Thus, the proof is complete.
\end{proof}

\noindent Define the mesh-dependent norm for all $\bv_h \in \bV_h^k$ as,
$$\vertiii{\bv_h}_h^2:= \vertiii{\bv_h}_{\bV}^2  + \| \bv_h \|_{1/2,h,D}^2 + \| \bv_h \|_{1/2,h,N}^2,$$
and the parameter-dependent discrete norm on $\bbV_h^k$ as
\begin{align*}
	\vertiii{(\bv_h,q_h)}_h^2 & := \vertiii{\bv_h}_h^2+\| q_h \|_{0, \Omega}^2.
\end{align*}

\subsection{Continuity of the forms $\cA_h$ and $\cL_h$}

In this subsection, we show the boundedness for the presented
discrete forms in the discrete formulation.
We start by bounding every term in the definitions of $\cA_h$ and $\cL_h$.

\begin{itemize}
	\item $m_{h}(\cdot, \cdot)$ is continuous. In fact, we have that
	\begin{align*}
	m_{h}(\bu_h, \bv_h) & \le \sum_{K \in \cT_h} \big( m^K(\bPi^{\cero,k}_K \bu_h, \bPi^{\cero,k}_K \bu_h)^{1/2} m^K(\bPi^{\cero,k}_K \bv_h, \bPi^{\cero,k}_K \bv_h)^{1/2} \\
	& \qquad \quad  + \xi_2 \, m^K((\bI - \bPi^{0, k}_K)\bu_h, (\bI - \bPi^{0, k}_K)\bu_h)^{1/2} m^K((\bI - \bPi^{0, k}_K)\bv_h, (\bI - \bPi^{0, k}_K)\bv_h)^{1/2} \big) \\
	& \le \max \{ 1, \xi_2 \} \,\sum_{K \in \cT_h}  m^K(\bu_h, \bu_h) \, m^K(\bv_h, \bv_h) = \xi^* \| \bbK^{-1/2} \bu_h\|_{0, \Omega} \| \bbK^{-1/2} \bv_h\|_{0, \Omega},
\end{align*}
where we have used \eqref{ghjtnb} and with $\xi^* := \max \{ 1, \xi_2 \}.$
\item $a_{h}(\cdot, \cdot)$ is continuous:
\begin{align*}
	{a}_{h}^K(\bu_h, \bv_h) & \le \big( a^K(\bPi^{\beps,k}_K \bu_h, \bPi^{\beps,k}_K \bu_h)^{1/2} a^K(\bPi^{\beps, k}_K \bv_h, \bPi^{\cero,k}_K \bv_h)^{1/2} \\
	& \qquad \quad  + \zeta_2 \, a^K((\bI - \bPi^{\beps, k}_K)\bu_h, (\bI - \bPi^{\beps, k}_K)\bu_h)^{1/2} \, a^K((\bI - \bPi^{\beps, k}_K)\bv_h, (\bI - \bPi^{\beps, k}_K)\bv_h)^{1/2} \big) \\
	& \le \max \{ 1, \zeta_2 \} \,  a^K(\bu_h, \bu_h) \, a^K(\bv_h, \bv_h) = \max \{ 1, \zeta_2 \} \, \nu \| \beps (\bu_h)\|_{0, K} \| \beps (\bv_h) \|_{0, K},
\end{align*}
where we have used \eqref{ghjtnb}.
Thus, we obtain
\begin{align*}
	a_{h}(\bu_h, \bv_h) &:= \sum_{K \in \cT_h}{a}_{h}(\bu_h, \bv_h) +  \cN_{h}^{S, D}(\bu_h, \bv_h) + \cN_{h}^{B, D}(\bu_h, \bv_h) + \cN_{h}^{B, D}(\bv_h, \bu_h) \\
	& \qquad  + \cN_{h}^{S, N}(\bu_h, \bv_h) + \cN_{h}^{B, N}(\bu_h, \bv_h) + \cN_{h}^{B, N}(\bv_h, \bu_h) \\
	& \le \max \{ 1, \zeta_2 \} \, \nu \| \beps (\bu_h)\|_{0, \Omega} \| \beps (\bv_h) \|_{0, \Omega}+ \gamma_{D} \| \bu_h \|_{1/2,h,D} \| \bv_h \|_{1/2,h,D} + C_T \nu \| \beps(\bu_h) \|_{0, \Omega} \| \bv_h \|_{1/2,h,D}  \\
	& \qquad + C_T \nu \| \beps(\bv_h) \|_{0, \Omega} \| \bu_h \|_{1/2,h,D} + \gamma_{N} \| \bu_h \|_{1/2,h,N} \| \bv_h \|_{1/2,h,N} + C_T \,\nu \| \beps(\bu_h)\|_{0,\Omega} \|\bv_h \|_{1/2,h, N}\\
	& \qquad + C_T \,\nu \| \beps(\bv_h)\|_{0,\Omega} \|\bu_h \|_{1/2,h, N}\\
	& \le \zeta^*  \big( \nu \| \beps (\bu_h)\|_{0, \Omega}^2 + \| \bu_h \|_{1/2, h,D}^2 + \| \bu_h \|_{1/2, h,N}^2 \big)^{1/2}  \big( \nu \| \beps (\bv_h)\|_{0, \Omega}^2 + \| \bv_h \|_{1/2, h,D}^2 + \| \bv_h \|_{1/2, h,N}^2  \big)^{1/2},
\end{align*}
where $\zeta^* := (2 C_T \nu^{1/2} + \gamma_{D}+ \gamma_{N}+ \max \{ 1, \zeta_2 \} ).$
\item $b_{h}(\cdot, \cdot)$ is continuous:
\begin{align*}
	b_{h}(\bv_h, q_h) & \le   \| \vdiv \bv_h\|_{0, \Omega} \| q_h\|_{0, \Omega}  +  C_{\text{tr}} \| q_h \|_{0, \Omega} ( \| \bv_h \|_{1/2, h,D} + \| \bv_h \|_{1/2, h,N}) \\
	& \le (1+C_{\text{tr}}) \Big( \| \vdiv \bv_h\|_{0, \Omega}^2  +   \| \bv_h \|_{1/2, h,D}^2 + \| \bv_h \|_{1/2, h,N}^2 \Big)^{1/2} \| q_h\|_{0, \Omega} .
\end{align*}
\item $F_{h}(\cdot)$ is continuous: 
\begin{align*}
	F_h(\bv_h) & \le 
	 \| \ff_h \|_{0, \Omega}  \|\bv_h \|_{0, \Omega}  + \gamma_{D} \sum_{e \in \cE_h^D} \|h_e^{-1/2}  \bg\|_{0,e} \|h_e^{-1/2} \bv_h \|_{0,e} + \sum_{e \in \cE_h^D} \|h_e^{-1/2}  \bg\|_{0,e}  \nu \|h_e^{1/2} \beps( {\Pi^{\beps,k}_{K_e}} \bv_h) \nn\|_{0,e} \\
	& \le 
	\| \bbK ^{1/2} \|_{\infty, \Omega} \| \ff \|_{0, \Omega}  \|\bbK^{-1/2}\bv_h \|_{0, \Omega} + \gamma_{D} \sum_{e \in \cE_h^D} \|h_e^{-1/2}  \bg\|_{0,e} \|h_e^{-1/2} \bv_h \|_{0,e} \\
	& \qquad \quad + ( \sum_{e \in \cE_h^D} \|h_e^{-1/2}  \bg\|_{0,e}^2)^{1/2}  \nu C_T \|\beps( \bv_h) \|_{0,\Omega} \\
	& \le C \Big( \| \ff \|_{0, \Omega}^2   + (\gamma_{D} +  \nu \, C_T )  \|  \bg\|_{1/2,h, D} ^2 \Big)^{1/2}  \vertiii{\bv_h }_{h},
\end{align*}
where the discrete norm $\|  \bg\|_{1/2,h, D}:= \Big( \sum_{e \in \cE_h^{D}} h_e^{-1} \| \bg \|_{0, e}^2 \Big)^{1/2}$ (see \cite{urquiza2014weak}).
\item $G(\cdot)$ is continuous:
\begin{align*}
G(q_h) &\le  \sum_{e \in \cE_h^D} h_e^{-1/2} \| \bg\|_{0,e} h_e^{1/2} \| q_h \|_{0,e} \le C_{tr} \|  \bg\|_{1/2,h, D}\| q_h \|_{0,\Omega} .
\end{align*}

\end{itemize}

Thus, as a consequence of the above bounds,
we have that the discrete bilinear form $\cA_h$ and the linear
form $\cL_h$ are bounded with constants independent
of the mesh-size and the physical parameters.

\subsection{Global inf-sup condition}


First, in the following lemma, we show that the bilinear form
$b_h(\cdot, \cdot)$ satisfies the inf-sup condition on $\bV_h^k \times Q_h^k$.
\begin{lemma} \label{lem:discrete_inf}
	The bilinear form $b_{h}(\cdot, \cdot)$ satisfies inf-sup condition on ${\bV}_h^k \times Q_h^k$, that is, there exist constants $\beta >0$ such that
	\begin{align*}
		\sup_{(0 \neq)\bv_h \in \bV_h^k} \frac{b_{h}(\bv_h, q_h)}{\vertiii{ \bv_h}_{h}} \ge \beta \| q_h \|_{0, \Omega} \quad \forall q_h \in Q_h^k.
	\end{align*}
\end{lemma}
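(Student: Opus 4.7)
The plan is to reduce the discrete inf-sup condition to the continuous one via a Fortin-type interpolation operator, leveraging the fact that the velocity space $\bV_h^k$ has no built-in boundary constraints so we can freely use a Fortin lift of a continuous test function.

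First I would fix $q_h\in Q_h^k$ and, since $Q_h^k\subset Q$, invoke the continuous inf-sup condition for $b(\cdot,\cdot)$ on $\bV\times Q$ to obtain $\widetilde{\bv}\in\bV$ satisfying
\[
-\int_\Omega q_h\,\vdiv \widetilde{\bv}\ \geq\ \beta_0\,\|q_h\|_{Q}\,\vertiii{\widetilde{\bv}}_{\bV}.
\]
Crucially, since $\widetilde{\bv}\in\bV$, we already have $\widetilde{\bv}=\cero$ on $\g_D$ and $\widetilde{\bv}\cdot\nn=0$ on $\g_N$.

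Next I would construct a Fortin-type interpolation operator $\mathcal{I}_h:\bV\to\bV_h^k$ defined by matching the local degrees of freedom of $\bV_h^k(K)$ listed in Section~\ref{subsec:VEspaces}. The key properties I would need are: (i) edge-matching, so that $(\mathcal{I}_h\widetilde{\bv})|_{e}$ equals the Lagrange/moment interpolant of $\widetilde{\bv}|_e$ on each boundary edge (hence $\mathcal{I}_h\widetilde{\bv}=\cero$ on $\g_D$ and $(\mathcal{I}_h\widetilde{\bv})\cdot\nn=0$ on $\g_N$ for straight edges where $\nn$ is constant); (ii) divergence commutativity, $\int_K q_{k-1}\,\vdiv(\mathcal{I}_h\widetilde{\bv}-\widetilde{\bv})=0$ for every $q_{k-1}\in\mathbb{P}_{k-1}(K)$, which is built-in through the divergence moment DoFs; and (iii) parameter-robust stability $\vertiii{\mathcal{I}_h\widetilde{\bv}}_{\bV}\leq C\,\vertiii{\widetilde{\bv}}_{\bV}$ with $C$ independent of $h$, $\nu$, and $\bbK$. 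Such an operator is the divergence-conforming Fortin interpolant adapted from \cite{BLVm2an17,vacca17}, whose stability follows from combining a Scott--Zhang-type $H^1$-stable interpolation with a local bubble correction that fixes the divergence moments while leaving boundary DoFs untouched.

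Setting $\bv_h:=\mathcal{I}_h\widetilde{\bv}$, property (i) immediately gives $\cN_h^{b,D}(\bv_h,q_h)=0$, $\cN_h^{b,N}(\bv_h,q_h)=0$, and $\|\bv_h\|_{1/2,h,D}=\|\bv_h\|_{1/2,h,N}=0$; property (ii) yields
\[
b_h(\bv_h,q_h)\ =\ -\sum_{K\in\cT_h}\!\int_K q_h\,\vdiv\bv_h\ =\ -\!\int_\Omega q_h\,\vdiv\widetilde{\bv}\ \geq\ \beta_0\,\|q_h\|_Q\,\vertiii{\widetilde{\bv}}_{\bV};
\]
and property (iii) together with the vanishing boundary contributions gives $\vertiii{\bv_h}_h=\vertiii{\bv_h}_{\bV}\leq C\,\vertiii{\widetilde{\bv}}_{\bV}$. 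Dividing and taking the supremum over $\bV_h^k$ produces the claim with $\beta=\beta_0/C$.

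The main obstacle will be assembling a Fortin operator that simultaneously (a) preserves the divergence moments, (b) leaves the boundary DoFs equal to those of $\widetilde{\bv}$ (so that the boundary conditions transfer), and (c) is stable in the parameter-dependent norm $\vertiii{\cdot}_{\bV}$ with constants independent of $\nu$ and $\bbK$. The Brinkman-specific point is (c): the $\|\bbK^{-1/2}\cdot\|_{0,\Omega}$ piece must be controlled via $L^2$-stability of the interpolant, and the $\|\vdiv\cdot\|_{0,\Omega}$ piece via the commutativity in (ii); once these ingredients are in place, the remainder of the proof is routine.
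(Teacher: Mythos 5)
Your strategy is sound and, if carried out, proves the lemma, but it takes a genuinely different route from the paper. The paper's proof is a two-step reduction: it invokes the discrete inf-sup condition already established in \cite{vacca17} for $b(\cdot,\cdot)$ over the constrained subspace $\bV_h^k\cap\bV$, observes that for such test functions all Nitsche boundary contributions vanish, so that $b_h(\bv_h,q_h)=b(\bv_h,q_h)$ and $\vertiii{\bv_h}_h=\vertiii{\bv_h}_{\bV}$, and then simply enlarges the supremum from $\bV_h^k\cap\bV$ to $\bV_h^k$. You instead go back to the \emph{continuous} inf-sup and rebuild the Fortin machinery that underlies the cited discrete result; the key observation you encode in property (i) — a discrete test function satisfying the boundary conditions strongly kills the extra Nitsche terms and the $1/2$-norms on $\g_D$, $\g_N$ — is exactly the observation the paper exploits by restricting to $\bV_h^k\cap\bV$. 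The paper's route buys brevity (no operator is constructed; the VEM-specific work is delegated to \cite{vacca17,BLVm2an17}); your route buys self-containedness and an explicit constant $\beta_0/C$, at the price of re-proving the hardest ingredient.

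One step deserves care as stated: property (iii), parameter-robust stability of $\mathcal{I}_h$ in the norm $\vertiii{\cdot}_{\bV}$ for an \emph{arbitrary} realizer $\widetilde{\bv}$ of the continuous inf-sup, is stronger than what the standard DoF-based VEM interpolant gives. The usual bound $\|\mathcal{I}_h\widetilde{\bv}\|_{0,\Omega}\le C(\|\widetilde{\bv}\|_{0,\Omega}+h|\widetilde{\bv}|_{1,\Omega})$ does not control $\|\bbK^{-1/2}\mathcal{I}_h\widetilde{\bv}\|_{0,\Omega}$ by $\vertiii{\widetilde{\bv}}_{\bV}$ uniformly as $\nu\to 0$, so you would genuinely need an $L^2$-stable, boundary-preserving, divergence-commuting quasi-interpolant — which is precisely the technical heart of \cite{vacca17}. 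The simpler and standard fix is to choose $\widetilde{\bv}$ not as a generic inf-sup realizer but as a Bogovskii-type right inverse: $\widetilde{\bv}\in[H^1_0(\Omega)]^2\subset\bV$ with $\vdiv\widetilde{\bv}=-q_h$ and $\|\widetilde{\bv}\|_{1,\Omega}\le C\|q_h\|_{0,\Omega}$. Then plain $H^1$-type stability of the interpolant, together with $\nu\le 1$, the bound $\kappa_2$ on $\bbK^{-1}$, and the divergence-moment matching, gives $b_h(\mathcal{I}_h\widetilde{\bv},q_h)=\|q_h\|_{0,\Omega}^2$ and $\vertiii{\mathcal{I}_h\widetilde{\bv}}_h\le C\|q_h\|_{0,\Omega}$, which is all you need. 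Note also that point-value degrees of freedom are not defined for $H^1$ functions, so the interpolant must indeed be understood in the quasi-interpolation sense you allude to; working with $\widetilde{\bv}\in[H^1_0(\Omega)]^2$ additionally sidesteps the delicate issue of preserving only the normal trace on $\g_N$ under averaging-type operators.
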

\begin{proof}
	From \cite{vacca17}, we have that
	\begin{align*}
		\sup_{(0 \neq){\bv}_h \in \bV_h^k \cap \bV} \frac{b({\bv}_h, q_h)}{\vertiii{\bv_h}_{\bV}} \ge \beta \| q_h \|_{0, \Omega} \qquad \forall q_h \in Q_h^k.
	\end{align*}
	For ${\bv}_h \in \bV_h^k  \cap \bV$, $\vertiii{\bv_h}_h= \vertiii{\bv_h}_{\bV}$ and $b_{h}({\bv}_h, q_h)= b({\bv}_h, q_h) \,\, \forall q_h \in Q_h^k$ and thus yield
	\begin{align*}
		\sup_{(0 \neq){\bv}_h \in \bV_h^k\cap \bV} \frac{b_{h}({\bv}_h, q_h)}{\vertiii{ {\bv}_h}_{h}} = \sup_{(0 \neq){\bv}_h \in \bV_h^k \cap \bV} \frac{b({\bv}_h, q_h)}{\vertiii{{\bv}_h}_{\bV}}.
	\end{align*}
	Trivially, we have $\bV_h^k \cap \bV \subseteq \bV_h^k$ and achieve
	\begin{align*}
		\sup_{(0 \neq)\bv_h \in \bV_h^k} \frac{b_{h}(\bv_h, q_h)}{\vertiii{ \bv_h}_{h}} \ge \sup_{(0 \neq){\bv}_h \in \bV_h^k\cap \bV} \frac{b_{h}({\bv}_h, q_h)}{\vertiii{ \bv_h}_{h}}
		\ge \beta \| q_h \|_{0, \Omega} \qquad \forall q_h \in Q_h^k.
	\end{align*}
\end{proof}

{\begin{lemma} \label{lem:coerc}
		For $\gamma_{D},\gamma_{N}> \frac{2 C_T^2 }{\min \{1, \zeta_1\}}$ and for all $\bv_h \in \bV_h^k$, there holds,
		\begin{align*}
			m_{h}(\bv_h, \bv_h)+ a_{h}(\bv_h, \bv_h) \ge (\xi_* + \zeta_*) \big(\|\mathbb{K}^{-1/2} \bv_h\|_{0,\Omega}^2 +  \nu \| \beps(\bv_h) \|_{0,\Omega}^2 + \| \bv_h \|_{1/2,h,D}^2 + \| \bv_h \|_{1/2,h,N}^2 \big),
		\end{align*}
	for $\xi_*, \zeta_*$ independent of $h$ and all the physical parameters.
	\end{lemma}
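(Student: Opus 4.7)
The plan is to expand $m_h(\bv_h,\bv_h)+a_h(\bv_h,\bv_h)$ into its constituent pieces and show that the "good" diagonal contributions dominate the sign-indefinite consistency-type Nitsche cross terms, once Young's inequality is used and the threshold condition on $\gamma_D,\gamma_N$ is invoked. I will treat the bulk VEM terms first (to recover the $\bbK^{-1/2}$ and $\beps$ parts of the right-hand norm) and then the boundary terms (to recover the two $\|\cdot\|_{1/2,h,\cdot}$ parts), absorbing the obstructive cross terms into the penalty contributions.

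For the bulk, I would use the standard VEM split: writing each virtual function as $\bv_h=\bPi\bv_h+(I-\bPi)\bv_h$ with the appropriate projection (the $L^2$-projection for $m_h$, the energy projection $\bPi^{\beps,k}_K$ for $a_h^K$), combining Pythagoras-style identities with the stability bounds \eqref{ghjtnb} $\xi_1 m^K\le\mathcal{S}^0_K$ and $\zeta_1 a^K\le\mathcal{S}^\beps_K$, and the spectral bounds on $\bbK^{-1}$. This yields constants $\xi_*$ and $\tilde\zeta_*:=\min\{1,\zeta_1\}$ such that
\[
\sum_{K\in\cT_h}\!\bigl(m^K(\bPi^{\cero,k}_K\bv_h,\bPi^{\cero,k}_K\bv_h)+\mathcal{S}^0_K\bigr)\ge \xi_*\|\bbK^{-1/2}\bv_h\|_{0,\Omega}^2, \qquad \sum_{K\in\cT_h}a_h^K(\bv_h,\bv_h)\ge\tilde\zeta_*\,\nu\|\beps(\bv_h)\|_{0,\Omega}^2.
\]
The penalty pieces $\cN_h^{S,D}(\bv_h,\bv_h)=\gamma_D\|\bv_h\|_{1/2,h,D}^2$ and $\cN_h^{S,N}(\bv_h,\bv_h)=\gamma_N\|\bv_h\|_{1/2,h,N}^2$ are already in the target norm.

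The cross terms $2\cN_h^{B,D}(\bv_h,\bv_h)$ and $2\cN_h^{B,N}(\bv_h,\bv_h)$ are the only obstruction. I would bound them using Lemma~\ref{cor:boundary}, namely
\[
2|\cN_h^{B,D}(\bv_h,\bv_h)|\le 2C_T\,\nu\,\|\beps(\bv_h)\|_{0,\Omega}\|\bv_h\|_{1/2,h,D},
\]
and then Young's inequality with a parameter $\delta>0$, together with $\nu\le 1$, to split as
\[
2C_T\,\nu\,\|\beps(\bv_h)\|_{0,\Omega}\|\bv_h\|_{1/2,h,D}\le \delta\,\nu\|\beps(\bv_h)\|_{0,\Omega}^2+\tfrac{C_T^{2}}{\delta}\|\bv_h\|_{1/2,h,D}^{2},
\]
and analogously for the Neumann boundary. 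Choosing $\delta=\tilde\zeta_*/2$ (so that after treating both boundaries one still has half of $\tilde\zeta_*\,\nu\|\beps(\bv_h)\|_{0,\Omega}^2$ left) gives leftover penalty coefficients $\gamma_{D}-\tfrac{2C_T^{2}}{\tilde\zeta_*}$ and $\gamma_{N}-\tfrac{2C_T^{2}}{\tilde\zeta_*}$, which are strictly positive exactly under the hypothesis $\gamma_{D},\gamma_{N}>2C_T^2/\min\{1,\zeta_1\}$. Setting $\zeta_*$ equal to the smallest of these four leftover factors yields the claimed inequality with constant $\xi_*+\zeta_*$ (up to relabelling of $\zeta_*$ to accommodate the half-factor absorbed by Young's inequality).

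The main obstacle, and the step I would pay most attention to, is the calibration of the Young parameter $\delta$: both Dirichlet and Neumann cross terms draw from the same reservoir $\tilde\zeta_*\,\nu\|\beps(\bv_h)\|_{0,\Omega}^2$, so I must split this reservoir between them before I can read off the required lower bound on $\gamma_D,\gamma_N$; the use of $\nu\le 1$ to pull a factor of $\nu$ out of $C_T^2\nu/\delta$ is what makes the penalty threshold independent of the physical parameters, which is essential for robustness. Everything else reduces to term-by-term assembly.
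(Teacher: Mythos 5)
Your proposal follows essentially the same route as the paper's proof: lower bounds for the stabilized bulk terms via \eqref{ghjtnb}, the penalty terms $\cN_h^{S,D},\cN_h^{S,N}$ kept exactly, and the Nitsche cross terms bounded through Lemma \ref{cor:boundary} plus Young's inequality with $\nu\le 1$, yielding the same threshold on $\gamma_D,\gamma_N$. The only wrinkle — with $\delta=\tilde\zeta_*/2$ the two boundary cross terms together consume the whole $\tilde\zeta_*\,\nu\|\beps(\bv_h)\|_{0,\Omega}^2$ reservoir rather than half of it — is a calibration slip the paper's own proof essentially shares (it subtracts $\tfrac{C_\zeta}{2}\nu^2\|\beps(\bv_h)\|_{0,\Omega}^2$ for each boundary), and it is repaired by taking $\delta$ slightly smaller at the price of a correspondingly larger admissible lower bound for $\gamma_D,\gamma_N$.
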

	\begin{proof}
		Use of the lower bound of $\mathcal{S}^0_K(\cdot,\cdot)$ from \eqref{ghjtnb}, and $\xi_*:=\min \{1, \xi_1\}$ yield 
		\begin{align*}
			m_{h}(\bv_h, \bv_h) &:= \sum_{K \in \cT_h}m_{h}^K(\bv_h, \bv_h) \ge \xi_*  \| \bbK^{-1} \bv_h \|_{0, \Omega}^2.
		\end{align*}
		From definition of the bilinear forms, lower bound of $\mathcal{S}^{\beps}_K(\cdot,\cdot)$ in \eqref{ghjtnb} and use of Lemma \ref{cor:boundary} gives
		\begin{align*}
			a_{h}(\bv_h, \bv_h) &:= \sum_{K \in \cT_h}a_{h}^K(\bv_h, \bv_h) +  \cN_{h}^{S, D}(\bv_h, \bv_h) + 2\,\cN_{h}^{B, D}(\bv_h, \bv_h) + \cN_{h}^{S, N}(\bv_h, \bv_h) + 2 \, \cN_{h}^{B, N}(\bv_h, \bv_h) \\
			& \ge \min \{1, \zeta_1\} \nu \| \beps(\bv_h) \|_{0,\Omega}^2 + \gamma_{D}  \| \bv_h \|_{h, D}^2  + \gamma_{N}  \| \bv_h \|_{1/2,h,N}^2 - 2   C_T \nu \| \beps(\bv_h) \|_{0, \Omega} \| \bv_h \|_{h,D} \\
			& \qquad \qquad -2 C_T \nu \| \beps(\bv_h)\|_{0,\Omega}\|\bv_h\|_{1/2,h,N}.
		\end{align*}
	Taking $C_\zeta:= \min \{1, \zeta_1\}$ and use of Young's inequality with $\epsilon=C_\zeta/2>0$, there holds 
	$$2 C_T \nu \| \beps(\bv_h) \|_{0, \Omega} \| \bv_h \|_{1/2,h,D} \le \frac{C_\zeta}{2} \nu^2 \| \beps(\bv_h) \|_{0,\Omega}^2 +  \frac{2 C_T^2 }{C_\zeta} \| \bv_h \|_{1/2,h, D}^2, $$
	 and using $\nu \le 1$, we achieve
		\begin{align*}
			a_{h}(\bv_h, \bv_h) & \ge  \frac{C_\zeta}{2}  \nu \| \beps(\bv_h) \|_{0,\Omega}^2 + \big(\gamma_{D} - \frac{2 C_T^2 }{C_\zeta} \big)  \| \bv_h \|_{1/2,h, D}^2  + \big(\gamma_{N} - \frac{2 C_T^2 }{C_\zeta} \big)  \| \bv_h \|_{1/2,h, N}^2.
		\end{align*}
		Choosing $\gamma_{D},\gamma_{N}> \frac{2 C_T^2 }{C_\zeta}$ and choosing $\zeta_*:= \min \{ \frac{C_\zeta}{2}, \gamma_{D} - \frac{2 C_T^2 }{C_\zeta}, \gamma_{N} - \frac{2 C_T^2 }{C_\zeta} \}$, we get
		\begin{align*}
			a_{h}(\bv_h, \bv_h) & \ge \zeta_* ( \nu \| \beps(\bv_h) \|_{0,\Omega}^2 +  \| \bv_h \|_{1/2,h, D}^2  + \| \bv_h \|_{1/2,h, N}^2).
		\end{align*}
	Thus, we can conclude this lemma.
	\end{proof}
}

In the next theorem, we prove the global inf-sup condition
for the discrete bilinear form $\cA_h(\cdot, \cdot)$.

\begin{theorem} \label{thm:discrete-inf-sup}
	For $(\bu_h, p_h)  \in \bbV_h^k$, there exists $(\bv_h, q_h) \in \bbV_h^k$ with $\vertiii{(\bv_h, q_h) }_{h} \le C \vertiii{(\bu_h, p_h)}_h$ such that
	\begin{align*}
		\cA_h ((\bu_h, p_h), (\bv_h, q_h))  \ge C_* \vertiii{(\bu_h, p_h)}_h^2.
	\end{align*}
	where $C_*$ is a constant independent of the mesh-size and the physical parameters.
\end{theorem}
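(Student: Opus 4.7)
The plan is to construct a suitable test pair as a small perturbation of $(\bu_h, p_h)$ itself, namely
$(\bv_h, q_h) = (\bu_h + \delta_1 \tilde{\bv}_h,\; p_h + \delta_2 r_h)$,
with positive parameters $\delta_1,\delta_2$ to be fixed at the end, and with auxiliary fields $\tilde{\bv}_h\in\bV_h^k$ and $r_h\in Q_h^k$ chosen below to recover the two missing norm contributions. Plugging in $(\bv_h,q_h) = (\bu_h,p_h)$ itself cancels the two $b_h$ terms by antisymmetry, and the coercivity estimate of Lemma~\ref{lem:coerc} yields
$$\cA_h((\bu_h,p_h),(\bu_h,p_h)) \ge C_1\bigl(\|\mathbb{K}^{-1/2}\bu_h\|_{0,\Omega}^2 + \nu\|\beps(\bu_h)\|_{0,\Omega}^2 + \|\bu_h\|_{1/2,h,D}^2 + \|\bu_h\|_{1/2,h,N}^2\bigr).$$
This controls every ingredient of $\vertiii{(\bu_h,p_h)}_h$ except $\|\vdiv\bu_h\|_{0,\Omega}$ and $\|p_h\|_{0,\Omega}$, and the two perturbations are designed to recover exactly these.

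For the pressure, I would invoke Lemma~\ref{lem:discrete_inf} to pick $\tilde{\bv}_h$ with $b_h(\tilde{\bv}_h, p_h) \ge \beta\|p_h\|_{0,\Omega}^2$ and $\vertiii{\tilde{\bv}_h}_h \le C\|p_h\|_{0,\Omega}$. Testing with $(\delta_1\tilde{\bv}_h,0)$ contributes $\delta_1 b_h(\tilde{\bv}_h,p_h)$, which is the sought $\|p_h\|_{0,\Omega}^2$ term, plus cross terms $\delta_1[m_h(\bu_h,\tilde{\bv}_h) + a_h(\bu_h,\tilde{\bv}_h)]$; by the continuity estimates already established in Section~\ref{solvab} together with Young's inequality, these can be split between the coercive velocity part and a small fraction of the new $\|p_h\|^2$ contribution.

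For the divergence, I would decompose $\vdiv\bu_h = r_h + c$, where $c:=|\Omega|^{-1}\int_\Omega\vdiv\bu_h$ is the mean (nonzero in general because $\bu_h\cdot\nn$ only vanishes weakly) and $r_h\in Q_h^k$ is its zero-mean part. A direct calculation using $\int_\Omega r_h = 0$ gives
$b_h(\bu_h, r_h) = -\|r_h\|_{0,\Omega}^2 + \cN_{h}^{b,D}(\bu_h,r_h) + \cN_{h}^{b,N}(\bu_h,r_h),$
so testing with $(0,r_h)$ produces $\|r_h\|_{0,\Omega}^2$ minus boundary terms bounded, via Lemma~\ref{cor:boundary} and Young's inequality, by $\tfrac{1}{2}\|r_h\|_{0,\Omega}^2 + C(\|\bu_h\|_{1/2,h,D}^2 + \|\bu_h\|_{1/2,h,N}^2)$. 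The divergence theorem applied to $\bu_h$, combined with the trace estimate of Lemma~\ref{lem:dis-tr} on the boundary edges, yields $|c|^2|\Omega| \le C\,h(\|\bu_h\|_{1/2,h,D}^2 + \|\bu_h\|_{1/2,h,N}^2)$, which converts the bound on $\|r_h\|_{0,\Omega}^2$ into one on $\|\vdiv\bu_h\|_{0,\Omega}^2$ modulo already-controlled boundary contributions.

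Collecting the three pieces and choosing first the Young's parameter $\epsilon$ small, then $\delta_1$ and $\delta_2$ small enough so that every mixed term is absorbed into the positive diagonal contributions (coercivity of the velocity, $\delta_1\beta\|p_h\|_{0,\Omega}^2$, and $\tfrac12\delta_2\|\vdiv\bu_h\|_{0,\Omega}^2$), one obtains the desired lower bound $C_*\vertiii{(\bu_h,p_h)}_h^2$ with $C_*$ independent of $h$ and of $\nu,\mathbb{K}$. The companion norm bound $\vertiii{(\bv_h,q_h)}_h \le C\vertiii{(\bu_h,p_h)}_h$ then follows from the triangle inequality together with $\vertiii{\tilde{\bv}_h}_h \le C\|p_h\|_{0,\Omega}$ and $\|r_h\|_{0,\Omega}\le\|\vdiv\bu_h\|_{0,\Omega}\le \vertiii{\bu_h}_h$. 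The main obstacle is the careful book-keeping needed to tune $\delta_1,\delta_2,\epsilon$ so that every cross term is dominated simultaneously; in particular, the possibly nontrivial mean of $\vdiv\bu_h$ forces the auxiliary bound on $|c|$ via the boundary norms, without which the standard Stokes-type Fortin argument does not close in the present mixed-boundary Nitsche setting.
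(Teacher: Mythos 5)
Your proposal is correct and follows essentially the same route as the paper's proof: coercivity of $m_h+a_h$ (Lemma~\ref{lem:coerc}) for the velocity block, the discrete inf-sup of Lemma~\ref{lem:discrete_inf} for the pressure, the mean-corrected divergence $\vdiv\bu_h-c$ as an extra pressure test function, and a final tuning of the scaling and Young parameters. The only cosmetic difference is that the paper observes $b_h(\bu_h,c)=0$ and thus obtains $\|\vdiv\bu_h\|_{0,\Omega}^2$ directly, whereas you recover it from $\|r_h\|_{0,\Omega}^2$ by bounding the mean $c$ via the divergence theorem and the boundary norms; both variants rest on the same identity $\int_\Omega\vdiv\bu_h=\int_{\partial\Omega}\bu_h\cdot\nn$.
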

\begin{proof}
	By definition of $\cA_h(\cdot,\cdot)$ and Lemma \ref{lem:coerc}, we have 
	\begin{align} \label{est:GI-1}
		& \cA_h ((\bu_h, p_h), (\bu_h, p_h))  = m_{h}(\bu_h, \bu_h)+ a_{h}(\bu_h, \bu_h) \nonumber \\
		& \qquad  \ge (\xi_* + \zeta_*) \big(\|\mathbb{K}^{-1/2} \bu_h\|_{0,\Omega}^2 +  \nu \| \beps(\bu_h) \|_{0,\Omega}^2 + \| \bu_h \|_{1/2,h,D}^2 + \| \bu_h \|_{1/2,h,N}^2 \big).
	\end{align}
The discrete inf-sup condition in Lemma \ref{lem:discrete_inf} gives the existence of $\bw_h^* \in \bV_h^k$ such that $$ -b_h(\bw_h^*, p_h) \ge \beta_* \| p_h \|_{0, \Omega}^2 \quad \text{and} \quad \vertiii{ \bw_h^* }_h \le \| p_h \|_{0, \Omega}.$$
Thus, we get
\begin{align*}
	\cA_h ((\bu_h, p_h), (\bw_h^*, 0)) & = m_{h}(\bu_h, \bw_h^*)+ a_{h}(\bu_h, \bw_h^*) + b_h(\bw_h^*, p_h) \\
	& \le  (\xi^*+ \zeta^*) \big( \| \bbK^{-1} \bu_h \|_{0, \Omega}^2 +  \nu \| \beps (\bu_h)\|_{0, \Omega}^2 + \| \bu_h \|_{1/2, h,D}^2 + \| \bu_h \|_{1/2, h,N}^2 \big)^{1/2} \vertiii{ \bw_h^* }_{h}\\
	& \quad - \beta_* \| p_h \|_{0, \Omega}^2 \\
	& \le  (\xi^*+ \zeta^*) \big( \| \bbK^{-1} \bu_h \|_{0, \Omega}^2 +  \nu \| \beps (\bu_h)\|_{0, \Omega}^2 + \| \bu_h \|_{1/2, h,D}^2 + \| \bu_h \|_{1/2, h,N}^2 \big)^{1/2} \| p_h \|_{0, \Omega} \\
	& \quad - \beta_* \| p_h \|_{0, \Omega}^2.
\end{align*}
By using Young's inequality with $\epsilon_1 >0$ gives
\begin{align} \label{est:GI-2}
	\cA_h ((\bu_h, p_h), (\bw_h^*, 0)) & \le  \frac{\epsilon_1}{2}(\xi^*+ \zeta^*)  \big( \| \bbK^{-1} \bu_h \|_{0, \Omega}^2 +  \nu \| \beps (\bu_h)\|_{0, \Omega}^2 + \| \bu_h \|_{1/2, h,D}^2 + \| \bu_h \|_{1/2, h,N}^2 \big)  \nonumber \\
	& \qquad - \big(\beta_* -  \frac{1}{2\epsilon_1}(\xi^*+ \zeta^*) \big) \| p_h \|_{0, \Omega}^2.
\end{align}
Moreover, we know that for $c:=\frac{1}{\vert\Omega\vert}\int_{\Omega}\vdiv \bu_h$, we obtain
\begin{align*}
b_h(\bu_h, c) & = - \sum_{K \in \cT_h} (\vdiv \bu_h, c)_{0,K} + \sum_{e \in \cE_h^{D}} (\bu_h, c \nn)_{0,e} + \sum_{e \in \cE_h^{N}} (\bu_h \cdot \nn, c)_{0,e} \\
&  = c \Big( - \int_{\Omega} \vdiv \bu_h + \sum_{e \in \cE_h^{D} \cup  \cE_h^{N}} \int_e (\bu_h \cdot \nn) \ds \Big) = 0
\end{align*}
and $(\vdiv \bu_h-c) \in Q_h^k$, then, we  consider
\begin{align*} 
	\cA_h ((\bu_h, p_h), (\cero, \vdiv \bu_h-c)) & = - b_h(\bu_h, \vdiv \bu_h - c) \\
	& = \| \vdiv \bu_h \|_{0,\Omega}^2 - \sum_{e \in \cE_h^{D}} (\vdiv \bu_h, \bu_h)_{0,e} - \sum_{e \in \cE_h^{N}} (\vdiv \bu_h, \bu_h \cdot \nn)_{0,e} \\
	& \ge  \| \vdiv \bu_h \|_{0,\Omega}^2  - \big( \sum_{e \in \cE_h^{D} \cup \cE_h^{N}  } h_e \| \vdiv \bu_h \|_{0,e}^2 \big)^{1/2}  \Big( \| \bu_h\|_{1/2, h, D}^2 + \| \bu_h \cdot \nn \|_{1/2, h, N}^2 \Big)^{1/2}.
\end{align*}
Use of trace inequality in Lemma \ref{lem:discrete_trace} and Young's inequality with $\epsilon_2>0$ gives
\begin{align}\label{est:GI-3}
	\cA_h ((\bu_h, p_h), (\cero, \vdiv \bu_h-c)) & \ge  \| \vdiv \bu_h \|_{0,\Omega}^2 - C_{\text{tr}} \| \vdiv \bu_h \|_{0,\Omega}  \Big( \| \bu_h\|_{1/2, h, D}^2 + \| \bu_h \cdot \nn \|_{1/2, h, N}^2 \Big)^{1/2} \nonumber \\
	& \ge \Big( 1- \frac{\epsilon_2}{2} C_{\text{tr}} \Big) \| \vdiv \bu_h \|_{0,\Omega}^2 - \frac{C_{\text{tr}}}{2\, \epsilon_2} \Big( \| \bu_h\|_{1/2, h, D}^2 + \| \bu_h \cdot \nn \|_{1/2, h, N}^2 \Big).
\end{align}
By considering $(\bv_h, q_h) := \delta (\bu_h, p_h) - (\bw_h^*, 0) + (\cero, \vdiv \bu_h-c)$, for constant $\delta> 0$, and using the bounds \eqref{est:GI-1}-\eqref{est:GI-3}, we have
\begin{align*}
	\cA_h ((\bu_h, p_h), (\bv_h, q_h)) & \ge \delta (\xi_* + \zeta_*) \big(\|\mathbb{K}^{-1/2} \bu_h\|_{0,\Omega}^2 +  \nu \| \beps(\bu_h) \|_{0,\Omega}^2 + \| \bu_h \|_{1/2,h,D}^2 + \| \bu_h \|_{1/2,h,N}^2 \big) \\
	& \qquad -\frac{\epsilon_1}{2}(\xi^*+ \zeta^*)  \big( \| \bbK^{-1} \bu_h \|_{0, \Omega}^2 +  \nu \| \beps (\bu_h)\|_{0, \Omega}^2 + \| \bu_h \|_{1/2, h,D}^2 + \| \bu_h \|_{1/2, h,N}^2 \big)  \\
	& \qquad + \big(\beta_* -  \frac{1}{2\epsilon_1}(\xi^*+ \zeta^*) \big) \| p_h \|_{0, \Omega}^2 + \Big( 1- \frac{\epsilon_2}{2} C_{\text{tr}} \Big) \| \vdiv \bu_h \|_{0,\Omega}^2 \\
	& \qquad - \frac{C_{\text{tr}}}{2\, \epsilon_2} \Big( \| \bu_h\|_{1/2, h, D}^2 + \| \bu_h \cdot \nn \|_{1/2, h, N}^2 \Big) \\
	& \ge \Big( (\delta-\frac{\epsilon_1}{2})(\xi_* + \zeta_*) - \frac{C_{\text{tr}}}{2\, \epsilon_2}\Big) \big(\|\mathbb{K}^{-1/2} \bu_h\|_{0,\Omega}^2 +  \nu \| \beps(\bu_h) \|_{0,\Omega}^2 + \| \bu_h \|_{1/2,h,D}^2 + \| \bu_h \|_{1/2,h,N}^2 \big)\\
	& \qquad + \big(\beta_* -  \frac{1}{2\epsilon_1}(\xi^*+ \zeta^*) \big) \| p_h \|_{0, \Omega}^2 + \Big( 1- \frac{\epsilon_2}{2} C_{\text{tr}} \Big) \| \vdiv \bu_h \|_{0,\Omega}^2.
\end{align*}
 Taking $ \delta:= \frac{C_{\text{tr}}^2 + \beta_*}{\xi_* + \zeta_*}$,
 $ \epsilon_1 := \frac{\xi_* + \zeta_*}{\beta_*}$,
 and $\epsilon_2 :=\frac{1}{C_{\text{tr}}}$, then we obtain
 \begin{align*}
 	\cA_h ((\bu_h, p_h), (\bv_h, q_h)) & \ge \frac{C_{\text{tr}}^2 + \beta_*}{2} \big(\|\mathbb{K}^{-1/2} \bu_h\|_{0,\Omega}^2 +  \nu \| \beps(\bu_h) \|_{0,\Omega}^2 + \| \bu_h \|_{1/2,h,D}^2 + \| \bu_h \|_{1/2,h,N}^2 \big) \\
 	& \qquad + \frac{1}{2}\| \vdiv \bu_h \|_{0,\Omega}^2 + \frac{\beta_*}{2} \| p_h \|_{0, \Omega}^2 \\
 	& \ge C_* \vertiii{(\bu_h, p_h)}_h^2,
 \end{align*}
with $C_*:=\frac{1}{2} \Big( C_{\text{tr}}^2 + 2 \beta_* + 1 \Big)$.

Moreover, we see that $(\bv_h, q_h)= (\delta\bu_h- \bw_h^*, \delta p_h + \vdiv \bu_h-c)$ satisfies
\begin{align*}
	\vertiii{(\bv_h, q_h)}_h^2 & \le  \|\mathbb{K}^{-1/2} (\delta \bu_h- \bw_h^*)\|_{0,\Omega}^2 +  \nu \| \beps(\delta \bu_h- \bw_h^*) \|_{0,\Omega}^2 + \| \vdiv (\delta \bu_h- \bw_h^*) \|_{0,\Omega}^2  \\
	& \qquad + \sum_{e \in \cE_h^{D}} h_e^{-1} \| \delta \bu_h- \bw_h^* \|_{0,e}^2  +  \sum_{e \in \cE_h^{N}} h_e^{-1} \| (\delta \bu_h- \bw_h^*) \cdot \nn_e \|_{0,e}^2 + \|{\delta} p_h + \vdiv \bu_h-c \|_{0, \Omega}^2 \\
	& {\le  \left( \frac{C_{\text{tr}}^2 + \beta_*}{\xi_* + \zeta_*} + 2 \right) ( \vertiii{\bu_h}_h^2 + \| p_h \|_{0, \Omega}^2) }\le C \vertiii{(\bu_h, p_h)}_h^2,
\end{align*}
with $C:=\Big( \frac{C_{\text{tr}}^2 + \beta_*}{\xi_* + \zeta_*} + { 2} \Big).$
\end{proof}

Now, we are now in a position to establish the unique solvability, and the stability properties of
the discrete problem~\eqref{eq:weak-h}. The proof is a direct consequence of the above result.

\begin{theorem}
	{For given $\ff \in [L^2(\Omega)]^2$ and $\bg \in [H^{1/2}(\Gamma_D)]^2$}, the discrete problem~\eqref{eq:weak-h-vec} is well-posed and solution $(\bu_h, p_h) \in \bV_h^k \times Q_h^k$ satisfies
	\begin{align*}
	\vertiii{(\bu_h, p_h)}_h \le C ( \| \ff \|_{0, \Omega}^2   + (\gamma_{D} +  \nu \, C_T + C_{\text{tr}})  \|  \bg\|_{1/2,h, D} ^2)^{1/2}.
	\end{align*}
	where $C$ is a constant independent of viscosity and mesh size $h$.
\end{theorem}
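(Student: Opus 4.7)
The plan is to deduce this final theorem directly from the global inf-sup result (Theorem~\ref{thm:discrete-inf-sup}) together with the continuity estimates already established for $F_h$ and $G$. Since the discrete problem~\eqref{eq:weak-h-vec} is posed on a finite-dimensional space $\bbV_h$ with square structure (the trial and test spaces coincide), existence and uniqueness are equivalent, and both follow as soon as the bilinear form $\cA_h$ satisfies an inf-sup condition on $\bbV_h\times\bbV_h$. This is precisely what Theorem~\ref{thm:discrete-inf-sup} provides with constant $C_*$ independent of $h$ and the physical parameters, so well-posedness is immediate; alternatively one may appeal to the Banach--Ne\v{c}as--Babu\v{s}ka framework.

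For the stability estimate, I would proceed as follows. Given the (unique) discrete solution $(\bu_h,p_h)$, Theorem~\ref{thm:discrete-inf-sup} yields a test pair $(\bv_h,q_h)\in\bbV_h$ with
\[
C_*\vertiii{(\bu_h,p_h)}_h^2 \le \cA_h((\bu_h,p_h),(\bv_h,q_h)),
\qquad \vertiii{(\bv_h,q_h)}_h \le C\vertiii{(\bu_h,p_h)}_h.
\]
Using equation~\eqref{eq:weak-h-vec} the right-hand side equals $\cL_h((\bv_h,q_h)) = F_h(\bv_h)-G(q_h)$. Both continuity bounds were derived in the continuity analysis of Section~\ref{solvab}, namely
\[
F_h(\bv_h) \le C\bigl(\|\ff\|_{0,\Omega}^2 + (\gamma_D+\nu C_T)\|\bg\|_{1/2,h,D}^2\bigr)^{1/2}\vertiii{\bv_h}_h,
\quad
G(q_h) \le C_{\text{tr}}\|\bg\|_{1/2,h,D}\|q_h\|_{0,\Omega}.
\]

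Combining these, I would absorb both terms into $\vertiii{(\bv_h,q_h)}_h$ and then apply the bound $\vertiii{(\bv_h,q_h)}_h \le C\vertiii{(\bu_h,p_h)}_h$ coming from the inf-sup test construction. After dividing by $\vertiii{(\bu_h,p_h)}_h$ the desired estimate
\[
\vertiii{(\bu_h,p_h)}_h \le C\bigl(\|\ff\|_{0,\Omega}^2 + (\gamma_D+\nu C_T+C_{\text{tr}})\|\bg\|_{1/2,h,D}^2\bigr)^{1/2}
\]
follows, with $C$ independent of $\nu$, $\bbK$ and $h$ (since all constants in the inf-sup and continuity arguments have been tracked to be parameter-robust).

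There is essentially no hard step here: the bulk of the technical work---deriving the global inf-sup condition with a parameter-independent constant and checking continuity of $F_h$ and $G$---has already been done. The only point requiring minor care is making sure that the constant multiplying $\|\bg\|_{1/2,h,D}^2$ in the final bound indeed takes the form $\gamma_D+\nu C_T+C_{\text{tr}}$; this amounts to keeping the $F_h$-contribution ($\gamma_D+\nu C_T$) and the $G$-contribution ($C_{\text{tr}}$) separate when summing. No additional argument is required.
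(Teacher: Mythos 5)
Your proposal is correct and follows essentially the same route as the paper, which states the theorem as a direct consequence of the global inf-sup condition (Theorem~\ref{thm:discrete-inf-sup}) combined with the continuity bounds for $F_h$ and $G$ established in Section~\ref{solvab}. The argument you give—inf-sup plus boundedness of the right-hand side, then dividing by $\vertiii{(\bu_h,p_h)}_h$—is exactly the intended (omitted) proof.
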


\section{Abstract error analysis}\label{sec:abstract-error-analysis}

{We assume the regularity of the given data $\ff \in [H^{\max \{ 0,r-1\}}(\Omega)]^{2}$ and $\bg \in [H^{r + \frac{1}{2}}(\g_D)]^{2}$ with $r>0$ then we have the regularity estimate for the continuous solutions as
\begin{align} \label{est:reg}
	\| \bu \|_{1+r,\Omega} + \| p \|_{r,\Omega} \le C_{\text{reg}} (\| \ff \|_{\max \{0,r-1 \},\Omega} +\| \bg \|_{r+\frac{1}{2}, \g_D} ).
\end{align}}

\begin{lemma}[Polynomial approximation] \label{lem:poly}
	For $\bu \in [H^{1+r}(\Omega)]^2$ with ${r >0}$, there exists a polynomial approximation
	$\bu_{\pi}|_K \in [\mathbb{P}_k(K)]^{2}$ for each polygon $K$ and satisfies
	\begin{align*}
			\sum_{K \in \cT_h} \Big( \|  \bu -  \bu_{\pi} \|_{0, K} + h |  \bu -  \bu_{\pi}|_{1, K} \Big)  & \lesssim h^{\min\{ k, r\}+1} | \bu|_{1+r, \Omega}.
		\end{align*}
\end{lemma}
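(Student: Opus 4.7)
The plan is to invoke a local polynomial approximation result on each polygon $K$ and then sum. Under the mesh regularity hypothesis that each $K\in\cT_h$ is star-shaped with respect to a ball of radius $C_\cT h_K$, standard Bramble–Hilbert / Dupont–Scott theory applies on $K$ with a chain constant that depends only on $C_\cT$. Concretely, for each $K$ I would define $\bu_\pi|_K$ to be the averaged Taylor polynomial of degree $k$ associated with the ball of radius $C_\cT h_K$ guaranteed by assumption (a)/(b), which is a bounded projection from $[H^{1+r}(K)]^2$ onto $[\mathbb{P}_k(K)]^2$.

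The key local estimate I would quote is: for $s=\min\{k,r\}$, there exists $C>0$ depending only on $C_\cT$ (and $k$, $r$) such that
\begin{equation*}
\|\bu-\bu_\pi\|_{0,K}+h_K\,|\bu-\bu_\pi|_{1,K}\ \lesssim\ h_K^{s+1}\,|\bu|_{1+r,K}.
\end{equation*}
This follows from the scaled Bramble–Hilbert lemma applied on the star-shaped chunk (if $r$ is a positive integer one uses the classical version; for non-integer $r>0$ the fractional version in Dupont–Scott, or the $K$-method of interpolation between integer order estimates, is required).

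Summing over $K\in\cT_h$ and bounding each $h_K\le h$ gives
\begin{equation*}
\sum_{K\in\cT_h}\bigl(\|\bu-\bu_\pi\|_{0,K}+h\,|\bu-\bu_\pi|_{1,K}\bigr)\ \lesssim\ h^{s+1}\sum_{K\in\cT_h}|\bu|_{1+r,K}\ \lesssim\ h^{s+1}|\bu|_{1+r,\Omega},
\end{equation*}
where in the last step I use Cauchy–Schwarz together with the finite overlap of the local seminorms (or simply that the $\ell^1$ sum of local seminorms is controlled by the global one up to the number of elements, which is absorbed by one power of $h$ — alternatively one states the bound in the $\ell^2$ form and cites the additivity of the Sobolev seminorm).

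The only subtle point I expect is keeping the constant independent of the shape of the polygons, which is exactly what the star-shapedness assumption with radius $C_\cT h_K$ buys via the Dupont–Scott argument; and, for non-integer $r$, ensuring the fractional Bramble–Hilbert variant is cited correctly so that the power $h^{\min\{k,r\}+1}$ is obtained sharply. Everything else is routine scaling and summation.
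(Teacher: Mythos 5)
The paper itself states this lemma without proof, as a classical polynomial-approximation result on star-shaped polygons (Dupont--Scott / Bramble--Hilbert with constants depending only on $C_{\mathcal T}$), so your choice of the averaged Taylor polynomial on the ball of radius $C_{\mathcal T}h_K$ and the scaled local estimate
$\|\bu-\bu_\pi\|_{0,K}+h_K|\bu-\bu_\pi|_{1,K}\lesssim h_K^{\min\{k,r\}+1}|\bu|_{1+r,K}$
is exactly the standard route, including the remark that the fractional version is needed for non-integer $r$.

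The one genuine problem is your summation step. Your primary justification --- that the $\ell^1$ sum of local seminorms is controlled by the global seminorm ``up to the number of elements, which is absorbed by one power of $h$'' --- is false: in 2D the mesh-regularity assumptions give $N\sim h^{-2}$ elements, so Cauchy--Schwarz yields $\sum_K|\bu|_{1+r,K}\le N^{1/2}\bigl(\sum_K|\bu|_{1+r,K}^2\bigr)^{1/2}\sim h^{-1}|\bu|_{1+r,\Omega}$, which would destroy a full power of $h$ and leave only $h^{\min\{k,r\}}$, not the claimed $h^{\min\{k,r\}+1}$; there is no spare power available to absorb it. The correct reading, and the one the paper intends despite the $\ell^1$-style notation in the statement, is the broken ($\ell^2$) form: square the local estimates, sum, and use the superadditivity $\sum_K|\bu|_{1+r,K}^2\le|\bu|_{1+r,\Omega}^2$ (valid also for the fractional Sobolev--Slobodeckij seminorm since the local double integrals are taken over subsets of $\Omega\times\Omega$), which gives
$\|\bu-\bu_\pi\|_{0,\Omega}+h\,|\bu-\bu_\pi|_{1,\Omega}\lesssim h^{\min\{k,r\}+1}|\bu|_{1+r,\Omega}$
with broken norms on the left. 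You do mention this alternative in passing; it should be the argument, not a parenthetical, and the erroneous ``absorbed by one power of $h$'' remark should be dropped.
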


\begin{lemma}[Interpolation approximation] \label{lem:interpolant}
	For $\bu \in [H^{1+r}(\Omega)]^2$ with ${r >0}$, there exists
	a polynomial approximation $\bu_I \in\bV_h^k$ and satisfies
	\begin{align*}
	\|  \bu -  \bu_I \|_{0, \Omega} + h |  \bu -  \bu_I|_{1, \Omega} &\lesssim h^{\min\{ k, r\}+1} | \bu|_{1+r, \Omega}.
	\end{align*}
\end{lemma}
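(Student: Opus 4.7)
The plan is to build $\bu_I$ element by element by matching the degrees of freedom of $\bu$, and then to combine polynomial preservation with a Bramble--Hilbert type estimate on each polygon~$K$.

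First, I would define $\bu_I|_K \in \bV_h^k(K)$ as the unique element such that every local degree of freedom listed in Section~\ref{subsec:VEspaces} coincides with the corresponding DOF of $\bu$. Unisolvence yields well-definedness, while shared edge DOFs across neighbouring elements produce a globally conforming function in $[H^1(\Omega)]^2$. Since $[\bbP_k(K)]^2 \subset \bV_h^k(K)$, the operator $\bv \mapsto \bv_I$ reproduces polynomials of degree at most~$k$.

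Writing $\bu_\pi$ for the piecewise polynomial approximation of Lemma~\ref{lem:poly}, polynomial preservation gives $(\bu_\pi)_I|_K = \bu_\pi$, so that on each $K$,
\begin{align*}
\bu - \bu_I = (\bu - \bu_\pi) - (\bu - \bu_\pi)_I.
\end{align*}
It therefore suffices to bound $\|\bv_I\|_{0,K}$ and $h_K|\bv_I|_{1,K}$ in terms of the norms of $\bv := \bu - \bu_\pi$. For this I would establish a local continuity estimate of the form
\begin{align*}
\|\bv_I\|_{0,K} + h_K|\bv_I|_{1,K} \le C\bigl(\|\bv\|_{0,K} + h_K|\bv|_{1,K} + h_K^{1+r}|\bv|_{1+r,K}\bigr),
\end{align*}
obtained by mapping to a reference polygon under the star-shapedness assumption of Section~\ref{sec:VEapprox}, bounding each DOF (vertex and edge-point values via the continuous embedding $H^{1+r}(K) \hookrightarrow C^0(\overline{K})$ for $r>0$; moments against $\mathcal{G}_{k-2}(K)^\perp$ by Cauchy--Schwarz in $L^2$; and $\vdiv$ moments by the $H^1$ seminorm), and then invoking norm equivalence on the finite-dimensional DOF image together with a scaling argument.

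Substituting $\bv = \bu - \bu_\pi$ and using Lemma~\ref{lem:poly} to replace each norm of $\bu - \bu_\pi$ by the appropriate power of $h_K$ times $|\bu|_{1+r,K}$, then squaring and summing over $K \in \cT_h$, yields the claimed global rate $h^{\min\{k,r\}+1}|\bu|_{1+r,\Omega}$. The main obstacle is the local continuity estimate: the vertex and edge-point degrees of freedom are pointwise evaluations, which is precisely what forces the extra regularity $r>0$, and on general polygonal elements the scaled trace/Sobolev embedding must be handled with care so that the hidden constant stays independent of $h_K$ and of the number of vertices, which is where the mesh regularity assumptions of Section~\ref{sec:VEapprox} come into play.
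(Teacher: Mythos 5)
The paper does not actually prove this lemma: it is quoted as a known result for the divergence-conforming Stokes-type virtual spaces, inherited from \cite{BLVm2an17,vacca17}, so your attempt can only be measured against the standard argument in that literature. Your skeleton is the classical one (define $\bu_I$ by matching degrees of freedom, use conformity across edges, polynomial reproduction together with Lemma~\ref{lem:poly}, and a local stability bound for the DOF-interpolation operator), and that is indeed the route taken in the cited references.

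The gap is in the step you yourself flag as the main obstacle, and the fix you propose does not work as stated. For general polygonal meshes there is no reference polygon: the elements of $\cT_h$ have varying numbers of vertices and shapes, so you cannot pull everything back to a single fixed element, and a ``norm equivalence on the finite-dimensional DOF image'' has a constant that a priori depends on the shape of $K$; making it uniform over all admissible polygons is precisely what needs proof. Moreover, the functions of $\bV_h^k(K)$ are not known in closed form, so the bound $\|\bv_I\|_{0,K}+h_K|\bv_I|_{1,K}\lesssim\cdots$ cannot be obtained by inspecting a basis. The standard argument instead exploits that a virtual function is the solution of the local Stokes-type boundary-value problem built into the definition of $\bV_h^k(K)$ (equivalently, an energy-minimality property among functions with the same boundary trace, divergence and interior moments): its $H^1(K)$ norm is controlled by the $H^{1/2}(\partial K)$ norm of its polynomial boundary trace and by the prescribed moments, with constants depending only on the star-shapedness parameter $C_{\cT}$, and one then combines this with scaled trace, Poincar\'e and Sobolev inequalities on star-shaped domains. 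You would also need to check, rather than assert, that $[\bbP_k(K)]^2\subset\bV_h^k(K)$ for the \emph{enhanced} space (the extra orthogonality condition involving $\bPi^{\bnabla,k}_K$ is designed so that this holds, but it is not automatic), since polynomial reproduction is what lets you invoke Lemma~\ref{lem:poly}. With those two ingredients supplied, the rest of your argument (splitting $\bu-\bu_I=(\bu-\bu_\pi)-(\bu-\bu_\pi)_I$, summing over elements) is sound.
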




The following theorem provides the rate of convergence of our virtual
element scheme with Nitsche stabilization, for the Brinkman problem
with mixed boundary conditions, presented in \eqref{eq:weak-h}.

\begin{theorem}
	Let $(\bu, p) \in \bV \times Q$ and $(\bu_h, p_h) \in \bV_h^k \times Q_h^k$
	be the solutions of the continuous and discrete problems.
	Assuming {$\ff \in [H^{\max \{ r,k\}-1}(\Omega)]^{2}$ and $\bg \in [H^{r + \frac{1}{2}}(\g_D)]^{2}$ for $r>0$, there holds}
	\begin{align*}
	\vertiii{(\bu, p) - (\bu_h, p_h)}_h  \le C h^{\min \{ r,k \}} {(\| \ff \|_{\max \{ r,k\}-1,\Omega} +\| \bg \|_{r+\frac{1}{2}, \g_D} )},
	\end{align*}
	where $C$ is a generic constant independent of $\nu$ and $h$.
\end{theorem}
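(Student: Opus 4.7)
The plan is to combine a Strang-type decomposition with the global inf-sup condition of Theorem \ref{thm:discrete-inf-sup}. Denote by $\bu_I\in\bV_h^k$ the interpolant of Lemma \ref{lem:interpolant}, by $\bu_\pi$ the element-wise polynomial approximation of Lemma \ref{lem:poly}, and by $p_\pi\in Q_h^k$ the piecewise $L^2$-projection of $p$ onto $\mathbb{P}_{k-1}$. A triangle inequality yields
$$\vertiii{(\bu-\bu_h,\,p-p_h)}_h \le \vertiii{(\bu-\bu_I,\,p-p_\pi)}_h + \vertiii{(\bu_I-\bu_h,\,p_\pi-p_h)}_h,$$
and the first summand is handled directly by Lemmas \ref{lem:poly}--\ref{lem:interpolant} together with the classical continuous trace inequality to control the $\|\cdot\|_{1/2,h,D}$ and $\|\cdot\|_{1/2,h,N}$ pieces; combined with the regularity bound \eqref{est:reg}, this produces the desired $h^{\min\{r,k\}}$ factor.

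For the discrete error $\bxi_h:=(\bu_I-\bu_h,\,p_\pi-p_h)\in\bbV_h^k$, Theorem \ref{thm:discrete-inf-sup} supplies a test pair $(\bv_h,q_h)\in\bbV_h^k$ with $\vertiii{(\bv_h,q_h)}_h\lesssim\vertiii{\bxi_h}_h$ such that, upon invoking the discrete equation $\cA_h((\bu_h,p_h),\cdot)=\cL_h(\cdot)$,
$$C_*\vertiii{\bxi_h}_h^2 \le \cA_h\bigl((\bu_I-\bu,\,p_\pi-p),\,(\bv_h,q_h)\bigr) + \bigl[\cA_h((\bu,p),(\bv_h,q_h))-\cL_h((\bv_h,q_h))\bigr].$$
The first summand is controlled term by term using the continuity bounds for $\cA_h$ established earlier in Section \ref{solvab} and the approximation lemmas, contributing the expected $h^{\min\{r,k\}}$ rate.

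The main obstacle is the consistency residual in the bracket. After integrating the momentum equation by parts cell by cell and exploiting the continuous boundary identities $\bu=\bg$ on $\Gamma_D$, $\bu\cdot\nn=0$ on $\Gamma_N$ and $(\beps(\bu)\nn)\cdot\bt=0$ on $\Gamma_N$, the pieces $\cN_h^{S,D}(\bu,\cdot)$, $\cN_h^{b,D}(\bu,\cdot)$, $\cN_h^{S,N}(\bu,\cdot)$, $\cN_h^{b,N}(\bu,\cdot)$ and the symmetrization $\cN_h^{B,D}(\cdot,\bu)$ cancel against the corresponding data contributions in $F_h$ and $G$ exactly, while $\cN_h^{B,N}(\cdot,\bu)$ vanishes directly because $\bu\cdot\nn=0$. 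What survives is (i) the cell-wise VEM consistency errors $m_h^K(\bu,\bv_h)-m^K(\bu,\bv_h)$ and $a_h^K(\bu,\bv_h)-a^K(\bu,\bv_h)$, treated in the standard way by inserting $\bu_\pi$ (whose contribution is killed by the polynomial consistency of the projections) and applying the two-sided bounds \eqref{ghjtnb} together with Lemma \ref{lem:poly}; (ii) the load consistency $\langle \ff-\ff_h,\bv_h\rangle$, estimated via the $L^2$-orthogonality of $\Pi^{0,k-1}$ and the VEM approximability of $\bv_h$; and (iii) the truly delicate boundary projection error
$$\sum_{e\in\cE_h^D}\int_e\nu\,\beps(\bu-\Pi^{\beps,k}_{K_e}\bu)\nn\cdot\bv_h\,\ds+\sum_{e\in\cE_h^N}\int_e\nu\,\nn^t\beps(\bu-\Pi^{\beps,k}_{K_e}\bu)\nn\,(\bv_h\cdot\nn)\,\ds,$$
produced by the presence of $\Pi^{\beps,k}_{K_e}\bu$ in $\cN_h^{B,D}(\bu,\cdot)$ and $\cN_h^{B,N}(\bu,\cdot)$ instead of $\bu$ itself.

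This last piece is the principal difficulty. I would bound it by Cauchy--Schwarz, trading the $h_e^{-1/2}$ weight already present in $\|\bv_h\|_{1/2,h,D/N}\le\vertiii{\bv_h}_h$ against the $h_e^{1/2}$ weight supplied by a continuous trace inequality on $K_e$, and then invoking the approximation estimate $|\bu-\Pi^{\beps,k}_{K_e}\bu|_{1,K_e}\lesssim h^{\min\{r,k\}}|\bu|_{1+r,K_e}$ (with the analogous second-derivative bound) to extract the factor $h^{\min\{r,k\}}$. Inserting the regularity bound \eqref{est:reg}, dividing through by $\vertiii{\bxi_h}_h$ and recombining with the interpolation contribution closes the argument.
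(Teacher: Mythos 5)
Your proposal is correct and follows essentially the same route as the paper: a Strang-type argument driven by the global inf-sup condition of Theorem~\ref{thm:discrete-inf-sup}, with the consistency residual reduced via the boundary conditions to the cell-wise VEM consistency of $m_h$ and $a_h$, the load term, and the boundary projection terms involving $\beps((\bI-\Pi^{\beps,k}_{K_e})\bu)\nn$, each bounded by Cauchy--Schwarz, a scaled trace inequality and the approximation Lemmas~\ref{lem:poly}--\ref{lem:interpolant} together with \eqref{est:reg}. The cancellations you identify and your treatment of the delicate edge terms coincide with the paper's estimates of $T_1$--$T_5$, so no essential step is missing.
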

\begin{proof}
	Let $\boldsymbol{\delta}_h := (\bu_h, p_h) - (\bv_h, q_h) \in \bbV_h^k= \bV_h^k \times Q_h^k$, then,
	as a consequence of the global inf-sup condition in Theorem \ref{thm:discrete-inf-sup}, we have
	\begin{align*}
		C \vertiii{ \boldsymbol{\delta}_h }_{h} & \le \sup_{((\cero, 0) \neq) (\bw_h, r_h) \in \bbV_h^k} \frac{\cA_h(\boldsymbol{\delta}_h \pm (\bu, p), (\bw_h, r_h))}{\vertiii{(\bw_h, r_h)}_h} \\
	& = \sup_{((\cero, 0) \neq) (\bw_h, r_h) \in \bbV_h^k} \left( \frac{\cL_h(\bw_h, r_h) -\cA_h((\bu, p), (\bw_h, r_h))}{\vertiii{(\bw_h, r_h)}_h} + \frac{\cA_h((\bu, p) - (\bv_h, q_h), (\bw_h, r_h))}{\vertiii{(\bw_h, r_h)}_h} \right).
\end{align*}
We start the error analysis with estimating the consistency error for any $(\bw_h, r_h) \in \bV_h^k \times Q_h^k$ 
$$E_c(\bu, p) := \cL_h((\bw_h, r_h)) - \cA_h((\bu, p), (\bw_h, r_h)).$$

\noindent Use of the definition of discrete formulation \eqref{eq:weak-h-vec} leads to
\begin{align*}
E_c(\bu, p) & = F_h(\bw_h)   - G(r_h)  - m_h(\bu, \bw_h) - a_h(\bu, \bw_h) - b_h(\bw_h, p) + b_h(\bu, r_h) \\
& = \langle \ff_h, \bw_h \rangle -\sum_{K \in \cT_h} \big( m_h^K(\bu, \bw_h)+ a_h^K( \bu, \bw_h)  \big) - \sum_{K \in \cT_h} b^K(\bw_h, p)  - \sum_{e \in \cE_h^D}\int_e (p \nn) \cdot \bw_h \ds \\	
& \qquad - \sum_{e \in \cE_h^N}\int_e p (\bw_h \cdot \nn) \ds  + \sum_{e \in \cE_h^D}   \int_e (\nu \beps( {\Pi^{\beps,k}_{K_e}} \bu) \nn) \cdot  \bw_h \ds  + \sum_{e \in \cE_h^N} \int_e (\nn^t(\nu \beps( {\Pi^{\beps,k}_{K_e}} \bu) \nn)) (\bw_h \cdot \nn) \ds .
\end{align*}
The boundary conditions for continuous solution $\bu$ \eqref{bc:Sigma}-\eqref{bc:Gamma} and the continuous formulation \eqref{eq:weak} implies
\begin{align*}
	E_c(\bu, p) & = \Big(\langle \ff_h, \bw_h \rangle - (\ff,\bw_h) \Big) + \Big( m(\bu, \bw_h)+ a( \bu, \bw_h) +b(\bw_h, p) - \sum_{e \in \cE_h^D} \int_e (\nu \beps(\bu) - p \bI)\nn  \cdot \bw_h  \ds \\
	& \qquad - \sum_{e \in \cE_h^N} \int_e (\nn^t(\nu \beps(\bu) - p \mathbb{I})\nn) (\bw_h \cdot \nn)  \ds \Big) -\sum_{K \in \cT_h} \big( m_h^K(\bu, \bw_h)+ a_h^K( \bu, \bw_h)  \big) 
	\\ 
	& \qquad - \sum_{K \in \cT_h} b^K(\bw_h, p)  - \sum_{e \in \cE_h^D}\int_e (p \nn) \cdot \bw_h \ds  - \sum_{e \in \cE_h^N}\int_e p (\bw_h \cdot \nn) \ds  \\	
	& \qquad + \sum_{e \in \cE_h^D}   \int_e (\nu \beps( {\Pi^{\beps,k}_{K_e}} \bu) \nn) \cdot  \bw_h \ds  + \sum_{e \in \cE_h^N} \int_e (\nn^t(\nu \beps( {\Pi^{\beps,k}_{K_e}} \bu) \nn)) (\bw_h \cdot \nn) \ds \\
	& = \Big(\langle \ff_h, \bw_h \rangle - (\ff,\bw_h) \Big) + \sum_{K \in \cT_h} \big( m^K(\bu, \bw_h) - m_h^K(\bu, \bw_h) \big) + \sum_{K \in \cT_h} \big( a^K( \bu, \bw_h)  - a_h^K(\bu, \bw_h) \big)  \\
	& \qquad  - \sum_{e \in \cE_h^D}   \int_e (\nu \beps( (\bI - \Pi^{\beps,k}_{K_e}) \bu) \nn) \cdot  \bw_h \ds  - \sum_{e \in \cE_h^N} \int_e (\nn^t(\nu \beps( (\bI - \Pi^{\beps,k}_{K_e}) \bu) \nn)) (\bw_h \cdot \nn) \ds \\
	& =: \sum_{j=1}^5 T_j.
\end{align*}
The Cauchy-Schwarz inequality with $C_{\bbK}$ is  a constant depending on the bound $\| \bbK\|_{\infty, \Omega}$ gives
\begin{align*}
	T_1 & \le  C_{\bbK} \Big(\sum_{K \in \cT_h} \| (I- \Pi^{0,k-1}_K) \ff\|_{0,K}^2 \Big)^{1/2} \| \bbK^{-1/2} \bw_h\|_{0,\Omega} \\
	& \le C_{\bbK} h^{k} | \ff|_{k-1,\Omega}  \| \bbK^{-1/2} \bw_h\|_{0,\Omega}.
\end{align*}
Referring to \cite{Mora21imajna} for the estimates of the term $T_2$ as
\begin{align*}
T_2 & =\sum_{K \in \cT_h} \big( m^K(\bu, \bw_h) - m_h^K(\bu, \bw_h) \big)  \\
& =\sum_{K \in \cT_h} \big( (\bbK^{-1} \bu, \bw_h)_{0,K} - (\bbK^{-1}  \Pi^{0,k}_K \bu, \Pi^{0,k}_K \bw_h) - S^0_K((\bI - \Pi^{0,k}_K) \bu, (\bI - \Pi^{0,k}_K) \bw_h) \big) \\
& = \sum_{K \in \cT_h} \big( ((\bI - \Pi^{0,k}_K) (\bbK^{-1} \bu), (\bI  - \Pi^{0,k}_K) \bw_h)_{0,K} - (  (\bI - \Pi^{0,k}_K) \bu, (\bI - \Pi^{0,k}_K) (\bbK^{-1} \bw_h))_{0,K} \\
& \qquad \qquad  - (  (\bI - \Pi^{0,k}_K) \bu,  \bbK^{-1}(\bI - \Pi^{0,k}_K)  \bw_h)_{0,K}  - S^0_K((\bI - \Pi^{0,k}_K) \bu, (\bI - \Pi^{0,k}_K) \bw_h) \big) \\
& \le C_{\bbK} \sum_{K \in \cT_h} \Big( \|(\bI - \Pi^{0,k}_K) (\bbK^{-1} \bu) \|_{0,K} \| (\bI  - \Pi^{0,k}_K) \bw_h \|_{0,K} + \|(\bI - \Pi^{0,k}_K) \bu \|_{0,K} \| (\bI  - \Pi^{0,k}_K) (\bbK^{-1} \bw_h) \|_{0,K} \\
& \qquad \qquad \quad + \| \bbK^{-1}\|_{\infty, K} \|(\bI - \Pi^{0,k}_K) \bu \|_{0,K} \| (\bI  - \Pi^{0,k}_K) \bw_h \|_{0,K} \Big) \\
& \le C_{\bbK} h^{\min \{ r,k\}} |\bu|_{1+r, \Omega} \| \bbK^{-1/2} \bw_h \|_{0, \Omega}.
\end{align*}
Use of polynomial approximation Lemma \ref{lem:poly} and polynomial consistency leads to
\begin{align*}
	T_3 &= \sum_{K \in \cT_h} \big( a^K( \bu - \bu_{\pi}, \bw_h)  - a_h^K(\bu - \bu_{\pi}, \bw_h) \big) \\
	& \le 2  \big( \sum_{K \in \cT_h} \nu\| \beps(\bu - \bu_{\pi})\|_{0,K}^2 \big)^{1/2}  \big( \sum_{K \in \cT_h} \nu \|\beps( \bw_h)\|_{0,K}^2 \big)^{1/2} \\
	& \le C h^{\min \{r,k\}} | \bu|_{1+r,\Omega} \,   \nu \|\beps( \bw_h)\|_{0,\Omega}.
\end{align*}
Combination of Cauchy-Schwarz inequality and Lemma \ref{lem:discrete_trace} gives
\begin{align*}
	T_4 & \le \sum_{e \in \cE_h^D}  h_e^{-1/2} \|\nu \beps( (\bI - \Pi^{\beps,k}_{K_e}) \bu) \nn \|_{0,e} h_e^{1/2}\| \bw_h\|_{0,e} \\
	&  \le \nu C_{\text{tr}}  \Big( \sum_{K \in \cT_h} \|\beps( (\bI - \Pi^{\beps,k}_{K}) \bu) \|_{0,K}^2 \Big)^{1/2} \| \bw_h\|_{1/2,h, D} \\
	&  \le \nu C_{\text{tr}}  h^{\min \{r,k\}} | \bu|_{1+r,\Omega} \| \bw_h\|_{1/2,h, D}.
\end{align*}
In similar manner asthebound for term $T_4$, there holds
\begin{align*}
T_5 & \le \sum_{e \in \cE_h^N}  h_e^{-1/2} \|\nu \beps( (\bI - \Pi^{\beps,k}_{K_e}) \bu) \nn \|_{0,e} h_e^{1/2}\| \bw_h \cdot \nn\|_{0,e} \\
& \le \nu C_{\text{tr}}  \Big( \sum_{K \in \cT_h} \|\beps( (\bI - \Pi^{\beps,k}_{K_e}) \bu) \|_{0,K_e}^2 \Big)^{1/2} \| \bw_h\|_{1/2,h, N}\\
&  \le \nu C_{\text{tr}}  h^{\min \{r,k \}} | \bu|_{1+r,\Omega} \| \bw_h\|_{1/2,h, N}.
\end{align*}
The use of the estimates of $T_1$-- $T_5$ and the continuity of discrete form arrive to 
\begin{align*}
C \vertiii{ \boldsymbol{\delta}_h }_{h} & \le   \vertiii{(\bu, p) - (\bv_h, q_h)}_h + C_{\bbK} h^{\min \{ k,r \}} \Big(  {| \ff|_{\max \{ r,k\}-1,\Omega}}+  | \bu|_{1+r,\Omega}^2  \Big)^{1/2} \vertiii{ \bw_h}_h.
\end{align*}
The triangle inequality along with the previous bound for $\vertiii{ \boldsymbol{\delta}_h }_{h}$ yield
\begin{align*}
	\vertiii{(\bu, p) - (\bu_h, p_h)}_h & \le \vertiii{(\bu, p) - (\bv_h, q_h)}_h + \vertiii{(\bv_h, q_h) - (\bu_h, p_h)}_h \\
	& \le 2 \vertiii{(\bu, p) - (\bv_h, q_h)}_h +C \, h^{\min \{ k,r \}} \Big(  {| \ff|^2_{\max \{ r,k\}-1,\Omega}} +  | \bu|_{1+r,\Omega}^2  \Big)^{1/2}  .
\end{align*}

\noindent Taking $\bv_h = \bu_I$, $q_h|_K = \Pi^{0,k-1}_K p$ followed with the use of Lemma \ref{lem:interpolant} and regularity estimate \eqref{est:reg} then we have the estimates
\begin{align*}
\vertiii{(\bu, p) - (\bu_h, p_h)}_h & \le C h^{\min \{ k,r \}} \Big(  {| \ff|^2_{\max \{ r,k\}-1,\Omega}} +  | \bu|_{1+r,\Omega}^2  + |p|_{r, \Omega}^2 \Big)^{1/2} \\
& { \le C h^{\min \{ r,k \}} (\| \ff \|_{\max \{ r,k\}-1,\Omega} +\| \bg \|_{r+\frac{1}{2}, \g_D} )},
\end{align*} 
where $C$ is independent of $\nu$ and $h$. The proof is complete.
\end{proof}

\begin{remark}
	The global inf-sup condition for the continuous Taylor-Hood virtual element spaces cannot be followed in the same manner due to the fact that $\vdiv \bV_h^k$ is a piecewise discontinuous polynomial space of degree $k-1$ while $Q_h^k$ is continuous polynomial space of degree $k-1$, and hence the error analysis for these spaces do not follow from the previous theorem and requires a different treatment though the coercivity on the kernel of $b_h(\cdot, \cdot)$.
\end{remark}




		\section{Numerical results}\label{sec:numerical-section}
		This section is devoted to explore different numerical experiments to verify the performance and robustness of the proposed method. The numerical approach involves the utilization of the Dune-Fem library \cite{dedner2010generic}, specifically the Dune-Vem module  \cite{dedner2022framework} for generating the computational spaces $\bV_h^k$ and $Q_h^k$.  We resort to the \texttt{DivfreeSpace} + $\mathbb{P}_0$ on polygonal meshes to construct these spaces. Then, similar to \cite{BLVm2an17},  a more accurate pressure is recovered by an element-wise post-processing procedure. For completeness, we represent the pressure surface plots using the $\mathbb{P}_0$ discrete approximation.
				
		In order to tackle the resulting sparse linear system $\boldsymbol{Ax}=\boldsymbol{F}$, arising from the discretization process, we employ the sparse linear solver \texttt{spsolve} from the Scipy library \cite{2020SciPy-NMeth} with UMFPACK .
		A critical parameter in our simulations is the Nitsche parameter, which is chosen to satisfy $\gamma_{D}=\gamma_{N}=\gamma$, with $\gamma\geq 100(k+1)^2$, where $k$ represents the order of the numerical scheme employed (see also \cite{bertoluzza2022weakly}).  
		
		We denote by $N$ the number of elements in the mesh.  To study the rate of convergence, we use the relations $\mathcal{O}(h) \approx \mathcal{O}(N^{-1/2})$.  By $e(\cdot)$ we  denote the error associated with the quantity $\cdot$ in its natural norm, and will denote by $h_i$ the mesh~size corresponding to a refinement level $i$. The experimental convergence order is computed as
		\[r(\cdot) = \frac{\log(e_i(\cdot))- \log(e_{i+1}(\cdot))}{\log(h_i)- \log(h_{i+1})}\approx  \frac{\log(e_i(\cdot))- \log(e_{i+1}(\cdot))}{-2(\log(N_i)- \log(N_{i+1}))}, \]
		where $N_i$ and $N_{i+1}$ are two consecutive measures of the number of elements for the refinements $h_i$ and $h_{i+1}$, respectively.
		
		When evaluating the performance of our numerical scheme, we explore different geometric configurations, thereby assessing its robustness and versatility. Triangle and quad meshes are generated using the Dune mesher or the GMSH software \cite{geuzaine2009gmsh} through the Pygmsh interface  \cite{pygmshschlomer2021}. Voronoi  and other polygonal meshes are constructed using Polymesher \cite{polymeshertalischi2012} and Matlab, respectively.
		

		For the experiments we consider that the domain is partitioned considering the following types of meshes:
		\begin{itemize}
			\item $\mathcal{T}_{1,h}:=$ A mesh using triangles
			\item $\mathcal{T}_{2,h}:=$ A mesh with quadrilaterals
			\item  $\mathcal{T}_{3,h}:=$ A mesh with a Voronoi tessellation
			\item  $\mathcal{T}_{4,h}:=$ A mesh with non-convex polygons
		\end{itemize}
		Other mesh types were considered and the results were similar to those presented below.
		
		\subsection{Convergence on a square domain}\label{subsec:squaredomain}
		We start by considering the square domain $\Omega:=(0,1)^2$ together with smooth solutions for the velocity and pressure. The right-hand side and boundary conditions are chosen such that the exact solutions are given by
		$$
		\bu(x,y):= \left(\frac{\partial \varphi}{\partial y},-\frac{\partial \varphi}{\partial x}\right), \qquad p(x,y):=\sin(x-y),
		$$
		where $\varphi(x,y):=-256x^2(x - 1)^2y(y - 1)(2y - 1)$. The velocity is solenoidal and is characterized for having high tangential components across $y=0$ and $y=1$. 
		
		The error history, together with the values for the $L^2$-norm of $\nabla\cdot\bu_h$ are presented in Tables \ref{table-square2D-presrec}-\ref{table-square2D-presrec-k3}. Here, we observe that the rates behave optimal trough the different meshes, and the velocity divergence is converging to zero. Also, we note that the scheme is not divergence-free because of the Nitsche method employed.	For comparison, the error curves for $k=2,3$ are presented in Figure \ref{fig:square2D-errorcurves}. We observe that the energy error behaves like $\mathcal{O}(h^k)$, as expected.
				
		Several computed solutions on different meshes are portrayed in Figures \ref{fig:uh-square2D}-\ref{fig:ph-square2D}. On Figure \ref{fig:uh-square2D}, the velocity components and vector field are represented in different meshes. On the other hand, Figure \ref{fig:ph-square2D} describe the behavior of the discrete pressure throughout several meshes, where we observe a good match between them.

		\begin{table}[t!]
			\setlength{\tabcolsep}{4.5pt}
			\centering 
			\caption{Example \ref{subsec:squaredomain}. Error history on different meshes with $k=2$ for $\bu_h$ and $p_h$ on the unit square domain $\Omega=(0,1)^2$.}
			{\small\begin{tabular}{|c|rcccccc|} 
					\hline\hline
					$\mathcal{T}_{i,h}$&N   &   $h$  & $\mathrm{e}(\bu)$  &   $r(\bu)$   &   $\mathrm{e}(p)$  &   $r(p)$  &  $\Vert\nabla\cdot\bu_h\Vert_{0,\Omega}$  \\
					\hline 
					\hline
					\multirow{5}{0.01\linewidth}{$1$}
					&128 & 0.177 & 1.94e+00 & $\star$ & 7.98e-01 & $\star$ & 1.47e-04  \\
					&512 & 0.088 & 6.16e-01 & 1.66 & 2.11e-01 & 1.92 & 2.69e-05  \\
					&2048 & 0.044 & 1.79e-01 & 1.78 & 5.40e-02 & 1.97 & 5.01e-06  \\
					&8192 & 0.022 & 4.76e-02 & 1.91 & 1.36e-02 & 1.99 & 8.71e-07  \\
					&32768 & 0.011 & 1.22e-02 & 1.97 & 3.41e-03 & 2.00 & 1.79e-07  \\
					\hline
					\hline
					\multirow{5}{0.01\linewidth}{$2$}
					&64 & 0.125 & 2.08e+00 & $\star$ & 1.95e+00 & $\star$ & 1.14e-03  \\
					&256 & 0.063 & 5.30e-01 & 1.97 & 5.61e-01 & 1.79 & 1.92e-04  \\
					&1024 & 0.031 & 1.33e-01 & 1.99 & 1.50e-01 & 1.91 & 3.16e-05  \\
					&4096 & 0.016 & 3.33e-02 & 2.00 & 3.84e-02 & 1.96 & 5.51e-06  \\
					&16384 & 0.008 & 8.32e-03 & 2.00 & 9.69e-03 & 1.99 & 1.07e-06  \\
					\hline
					\hline
					\multirow{5}{0.01\linewidth}{$3$}
					 &64 & 0.162 & 2.01e+00 & $\star$ & 1.28e+00 & $\star$ & 7.39e-03  \\
					&256 & 0.089 & 4.85e-01 & 2.05 & 3.26e-01 & 1.97 & 8.47e-04  \\
					&1024 & 0.046 & 1.28e-01 & 1.92 & 8.81e-02 & 1.89 & 1.24e-04  \\
					&4096 & 0.021 & 3.03e-02 & 2.08 & 1.91e-02 & 2.21 & 2.37e-05  \\
					&16384 & 0.011 & 7.54e-03 & 2.01 & 4.51e-03 & 2.08 & 3.86e-06  \\
					\hline
					\hline
					\multirow{5}{0.01\linewidth}{$4$}
					&64 & 0.156 & 2.36e+00 & $\star$ & 1.61e+00 & $\star$ & 6.24e-03  \\
					&256 & 0.078 & 6.23e-01 & 1.93 & 4.54e-01 & 1.83 & 8.49e-04  \\
					&1024 & 0.039 & 1.58e-01 & 1.98 & 1.18e-01 & 1.94 & 1.09e-04  \\
					&4096 & 0.020 & 3.97e-02 & 1.99 & 2.95e-02 & 2.00 & 1.35e-05  \\
					&16384 & 0.010 & 9.94e-03 & 2.00 & 7.27e-03 & 2.02 & 1.59e-06  \\
					\hline
					\hline
			\end{tabular}}
			\smallskip			
			\label{table-square2D-presrec}
		\end{table}
		
		\begin{table}[t!]
			\setlength{\tabcolsep}{4.5pt}
			\centering 
			\caption{Example \ref{subsec:squaredomain}. Error history on different meshes with $k=3$ for $\bu_h$ and $p_h$ on the unit square domain $\Omega=(0,1)^2$.}
			{\small\begin{tabular}{|c|rcccccc|} 
					\hline\hline
					$\mathcal{T}_{i,h}$&N   &   $h$  & $\mathrm{e}(\bu)$  &   $r(\bu)$   &   $\mathrm{e}(p)$  &   $r(p)$  &  $\Vert\nabla\cdot\bu_h\Vert_{0,\Omega}$  \\
					\hline 
					\hline
					\multirow{5}{0.01\linewidth}{$1$}
					&128 & 0.177 & 2.00e-01 & $\star$ & 7.45e-02 & $\star$ & 3.03e-05  \\
					&512 & 0.088 & 2.42e-02 & 3.05 & 1.05e-02 & 2.83 & 3.89e-06  \\
					&2048 & 0.044 & 2.96e-03 & 3.03 & 1.38e-03 & 2.93 & 1.14e-06  \\
					&8192 & 0.022 & 3.65e-04 & 3.02 & 1.76e-04 & 2.96 & 3.96e-07  \\
					&32768 & 0.011 & 4.72e-05 & 2.95 & 2.23e-05 & 2.98 & 1.39e-07  \\
					\hline
					\hline
					\multirow{5}{0.01\linewidth}{$2$}
					&64 & 0.125 & 3.52e-01 & $\star$ & 2.15e-01 & $\star$ & 1.38e-04  \\
					&256 & 0.063 & 4.08e-02 & 3.11 & 3.57e-02 & 2.59 & 1.56e-05  \\
					&1024 & 0.031 & 4.67e-03 & 3.13 & 5.04e-03 & 2.82 & 4.69e-06  \\
					&4096 & 0.016 & 5.47e-04 & 3.10 & 6.66e-04 & 2.92 & 1.66e-06  \\
					&16384 & 0.008 & 6.56e-05 & 3.06 & 8.54e-05 & 2.96 & 5.88e-07  \\
					\hline
					\hline
					\multirow{5}{0.01\linewidth}{$3$}
					&64 & 0.162 & 2.67e-01 & $\star$ & 1.08e-01 & $\star$ & 1.36e-04  \\
					&256 & 0.089 & 3.39e-02 & 2.98 & 1.45e-02 & 2.89 & 8.00e-06  \\
					&1024 & 0.046 & 4.63e-03 & 2.87 & 1.63e-03 & 3.16 & 1.71e-06  \\
					&4096 & 0.021 & 5.23e-04 & 3.15 & 1.56e-04 & 3.38 & 6.52e-07  \\
					&16384 & 0.011 & 6.62e-05 & 2.98 & 1.77e-05 & 3.14 & 2.31e-07  \\
					\hline
					\hline
					\multirow{5}{0.01\linewidth}{$4$}
					&64 & 0.156 & 2.99e-01 & $\star$ & 1.54e-01 & $\star$ & 1.06e-04  \\
					&256 & 0.078 & 3.72e-02 & 3.01 & 2.15e-02 & 2.84 & 7.86e-06  \\
					&1024 & 0.039 & 4.58e-03 & 3.02 & 2.74e-03 & 2.97 & 1.26e-06  \\
					&4096 & 0.020 & 5.65e-04 & 3.02 & 3.41e-04 & 3.00 & 4.12e-07  \\
					&16384 & 0.010 & 7.01e-05 & 3.01 & 4.25e-05 & 3.01 & 1.46e-07  \\
					\hline
					\hline
			\end{tabular}}
			\smallskip			
			\label{table-square2D-presrec-k3}
		\end{table}
		\begin{figure}[!hbt]
			\centering
			\begin{minipage}{0.49\linewidth}\centering
			\includegraphics[scale=0.5,,trim= 0cm 0cm 0cm 0cm,clip]{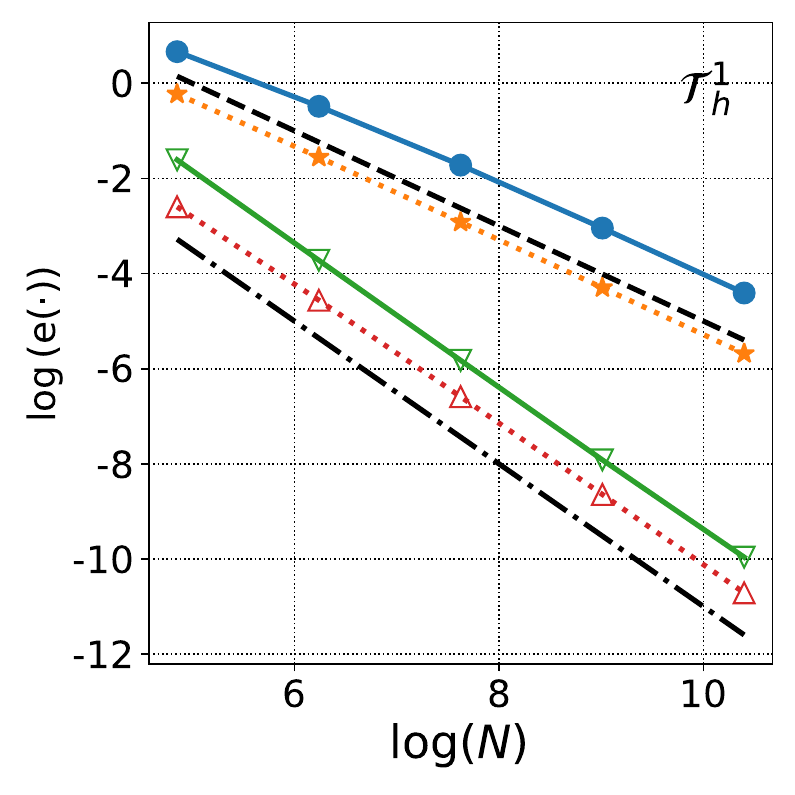}
		\end{minipage}
		\begin{minipage}{0.49\linewidth}\centering
			\includegraphics[scale=0.5,,trim= 0cm 0cm 0cm 0cm,clip]{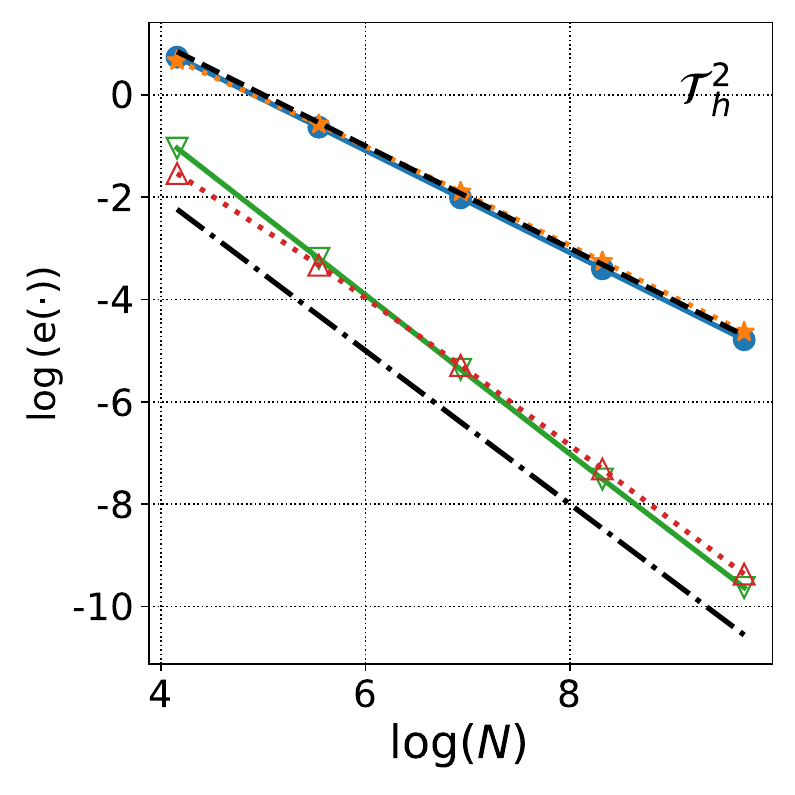}
		\end{minipage}\\
		\begin{minipage}{0.49\linewidth}\centering
			\includegraphics[scale=0.5,,trim= 0cm 0cm 0cm 0cm,clip]{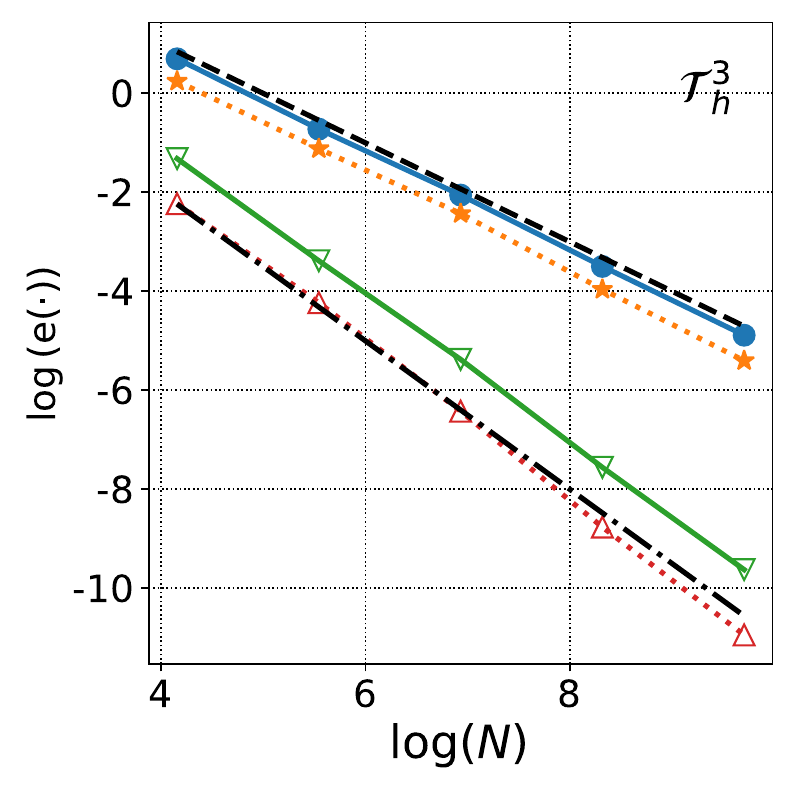}
		\end{minipage}
		\begin{minipage}{0.49\linewidth}\centering
			\includegraphics[scale=0.5,,trim= 0cm 0cm 0cm 0cm,clip]{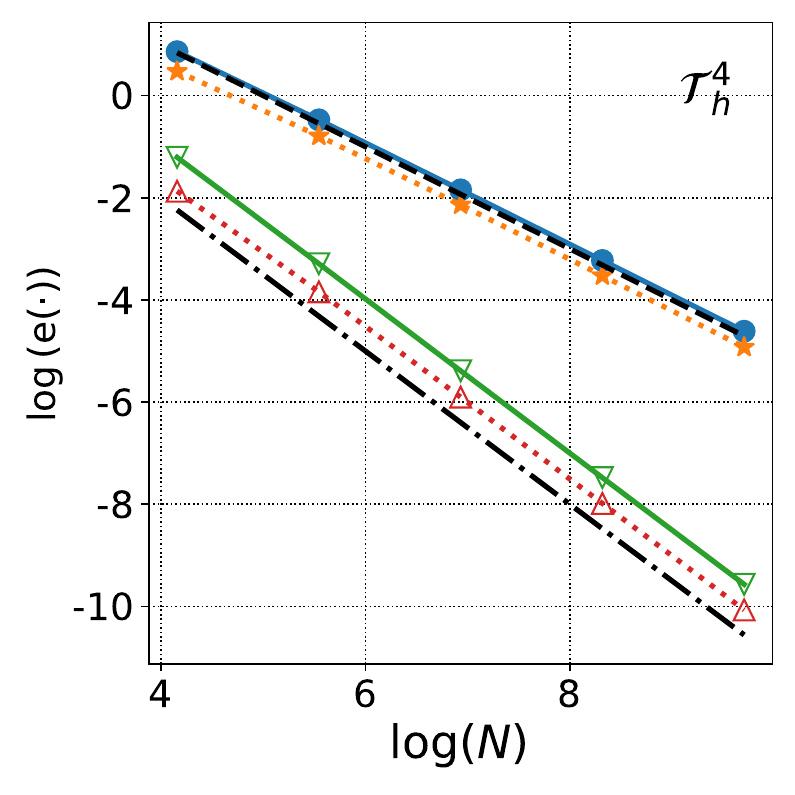}
		\end{minipage}\\
		\begin{minipage}{\linewidth}\centering
			\includegraphics[scale=0.32,trim= 0.8cm 2.2cm 0cm 2.2cm,clip]{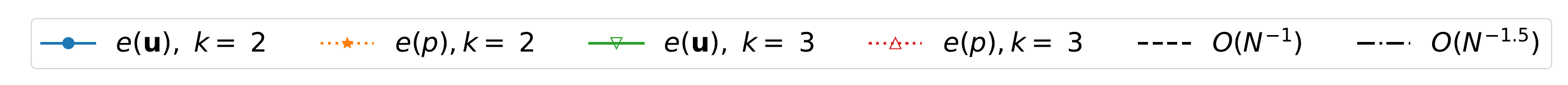}
		\end{minipage}
		\caption{Test \ref{subsec:squaredomain}.  Error curves of the virtual element scheme for the Brinkman equations using different meshes. Here, we set the parameters $\mathbb{K}=\mathbb{I}$, and $\nu=1$. }
		\label{fig:square2D-errorcurves}
		\end{figure}

		\begin{figure}[hbt!]
			\centering
			\begin{minipage}{0.32\linewidth}\centering
				{\footnotesize $\bu_{1,h}$}\\
				\includegraphics[scale=0.17, trim= 16cm 5cm 16cm 2.85cm,clip]{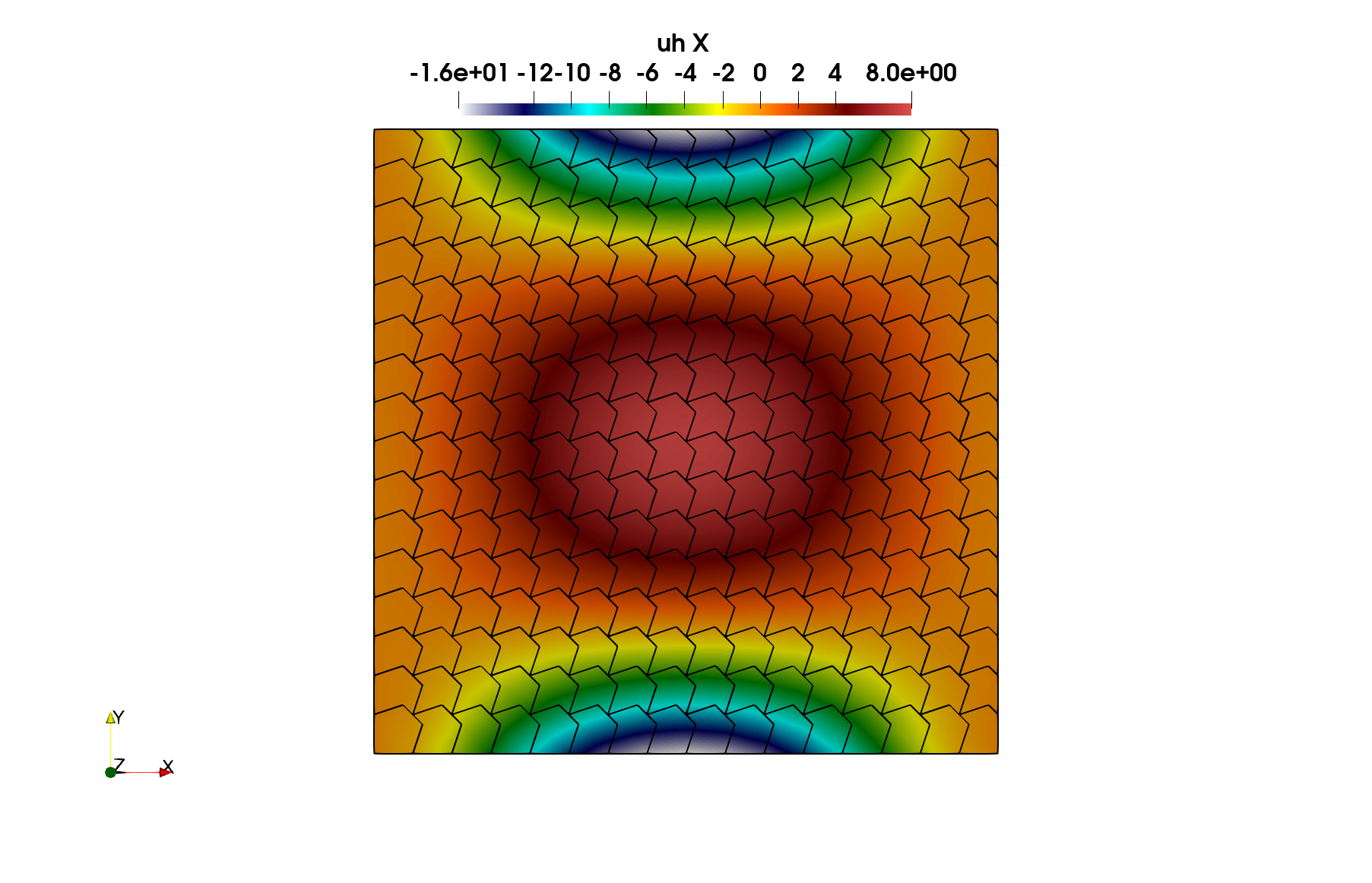}\\
			\end{minipage}
			\begin{minipage}{0.32\linewidth}\centering
				{\footnotesize $\bu_{2,h}$}\\
				\includegraphics[scale=0.17, trim= 16cm 5cm 16cm 2.85cm,clip]{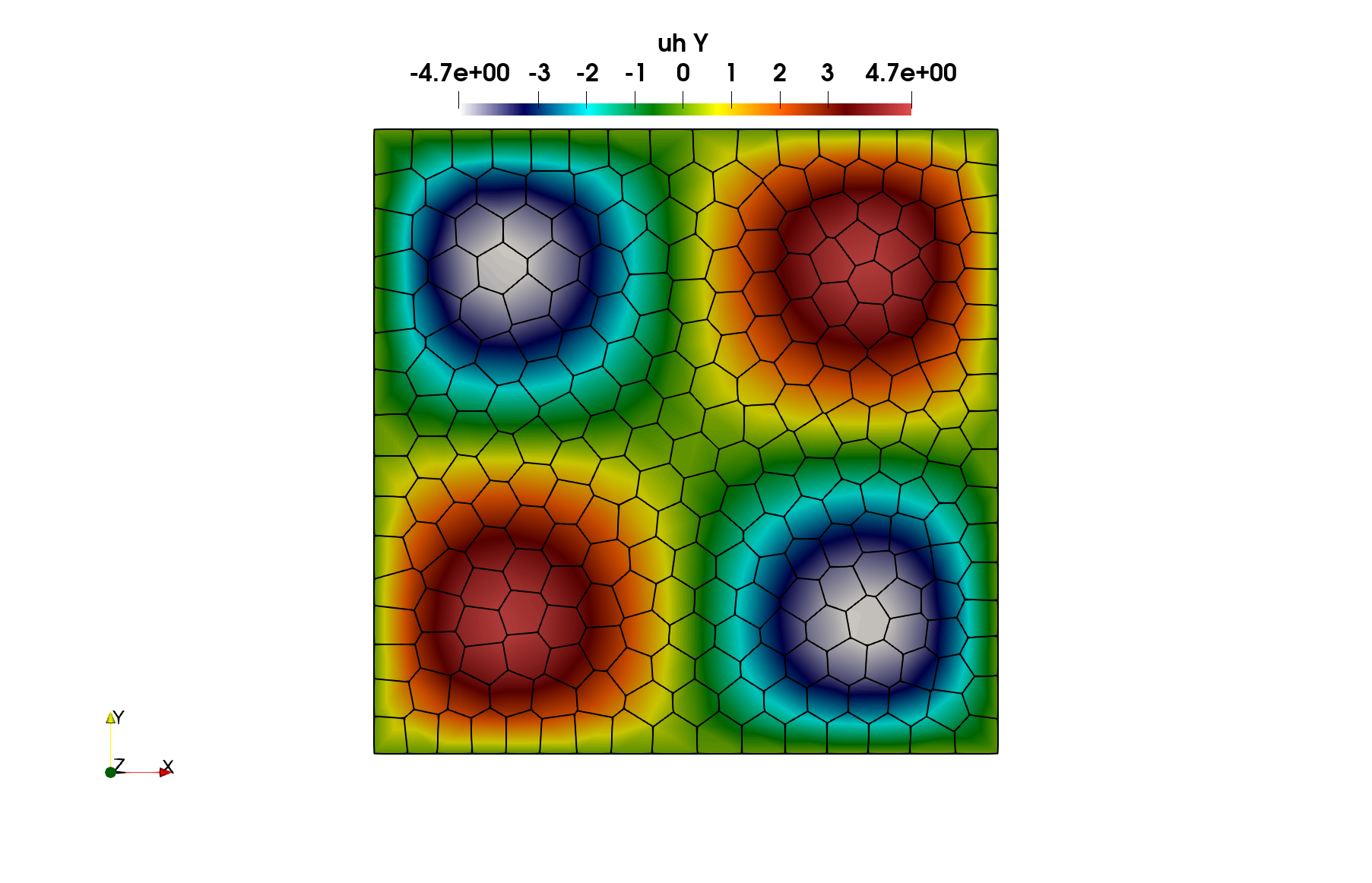}\\
			\end{minipage}
			\begin{minipage}{0.32\linewidth}\centering
				{\footnotesize $|\bu_h|$}\\
				\includegraphics[scale=0.17, trim= 16cm 5cm 16cm 2.85cm,clip]{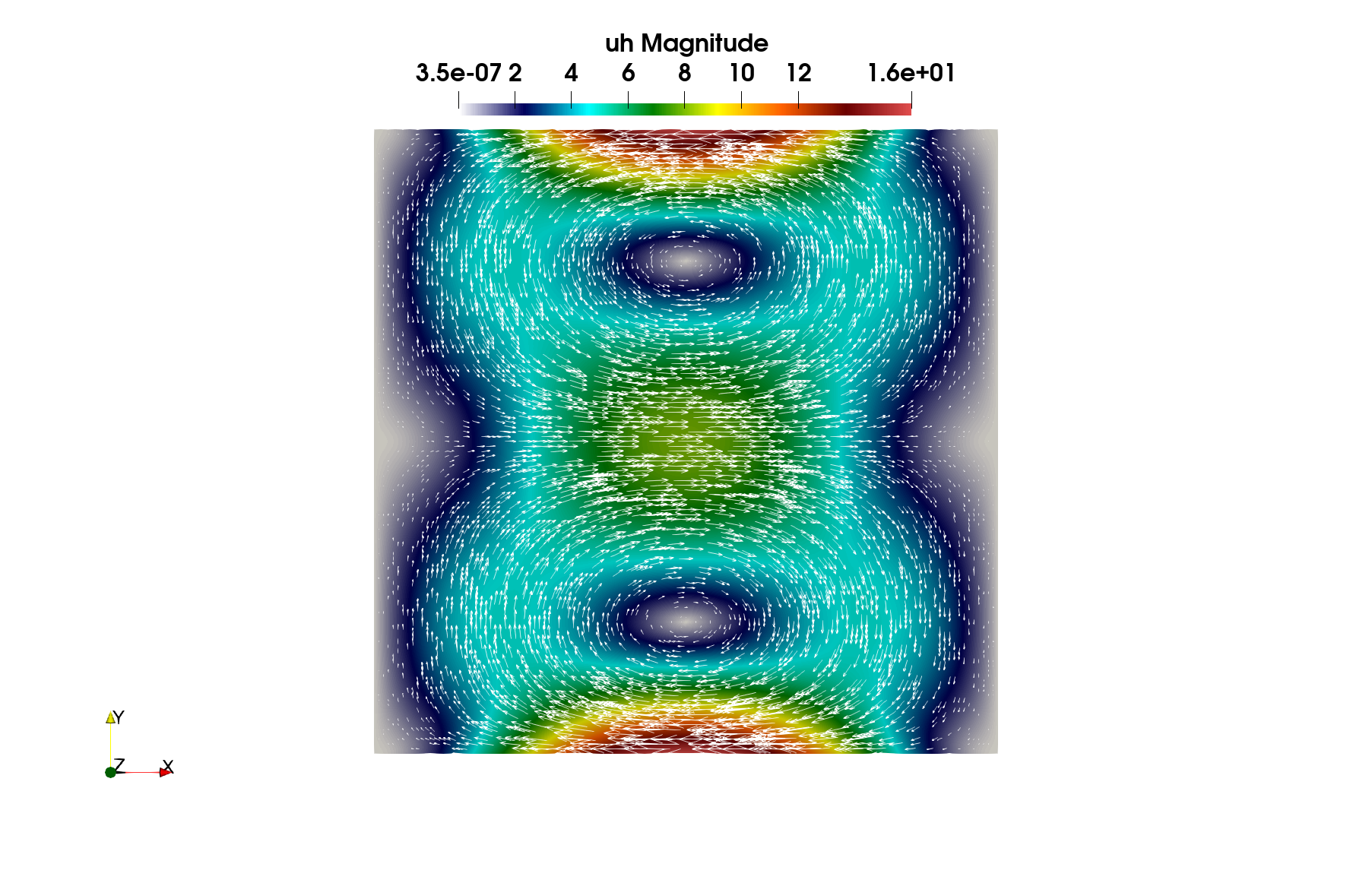}\\
			\end{minipage}
			\caption{Test \ref{subsec:squaredomain}. Scalar components of the computed velocity $\bu_{i,h}, i=1,2$ in different meshes together with the corresponding field $\bu_h$. }
			\label{fig:uh-square2D}
		\end{figure}

		\begin{figure}[hbt!]
			\centering
			\begin{minipage}{0.32\linewidth}\centering
				{\footnotesize $p_h$}\\
				\includegraphics[scale=0.17, trim= 16cm 5cm 16cm 2.85cm,clip]{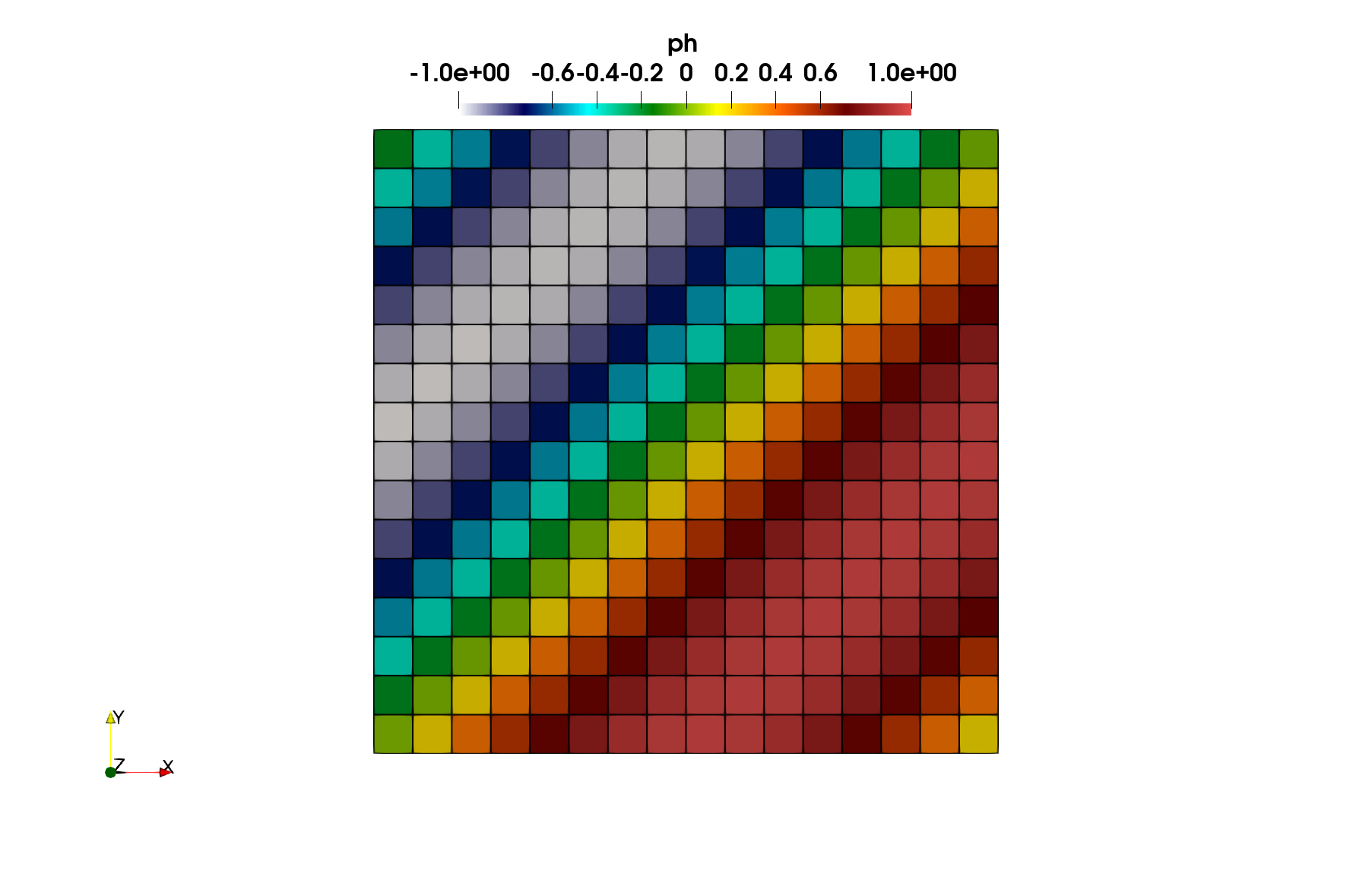}\\
			\end{minipage}
			\begin{minipage}{0.32\linewidth}\centering
				{\footnotesize $p_h$}\\
				\includegraphics[scale=0.17, trim= 16cm 5cm 16cm 2.85cm,clip]{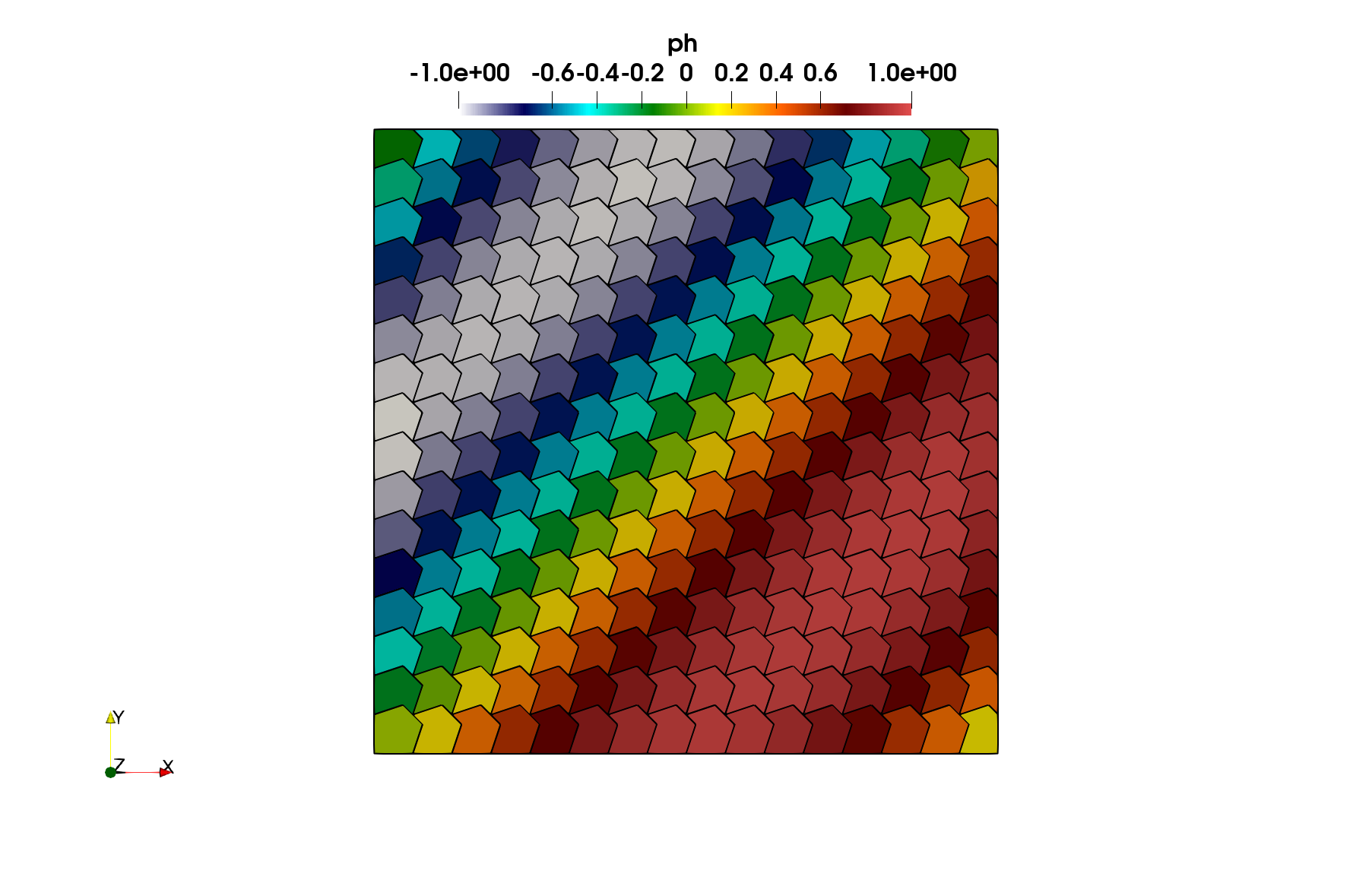}\\
			\end{minipage}
			\begin{minipage}{0.32\linewidth}\centering
				{\footnotesize $p_h$}\\
				\includegraphics[scale=0.17, trim= 16cm 5cm 16cm 2.85cm,clip]{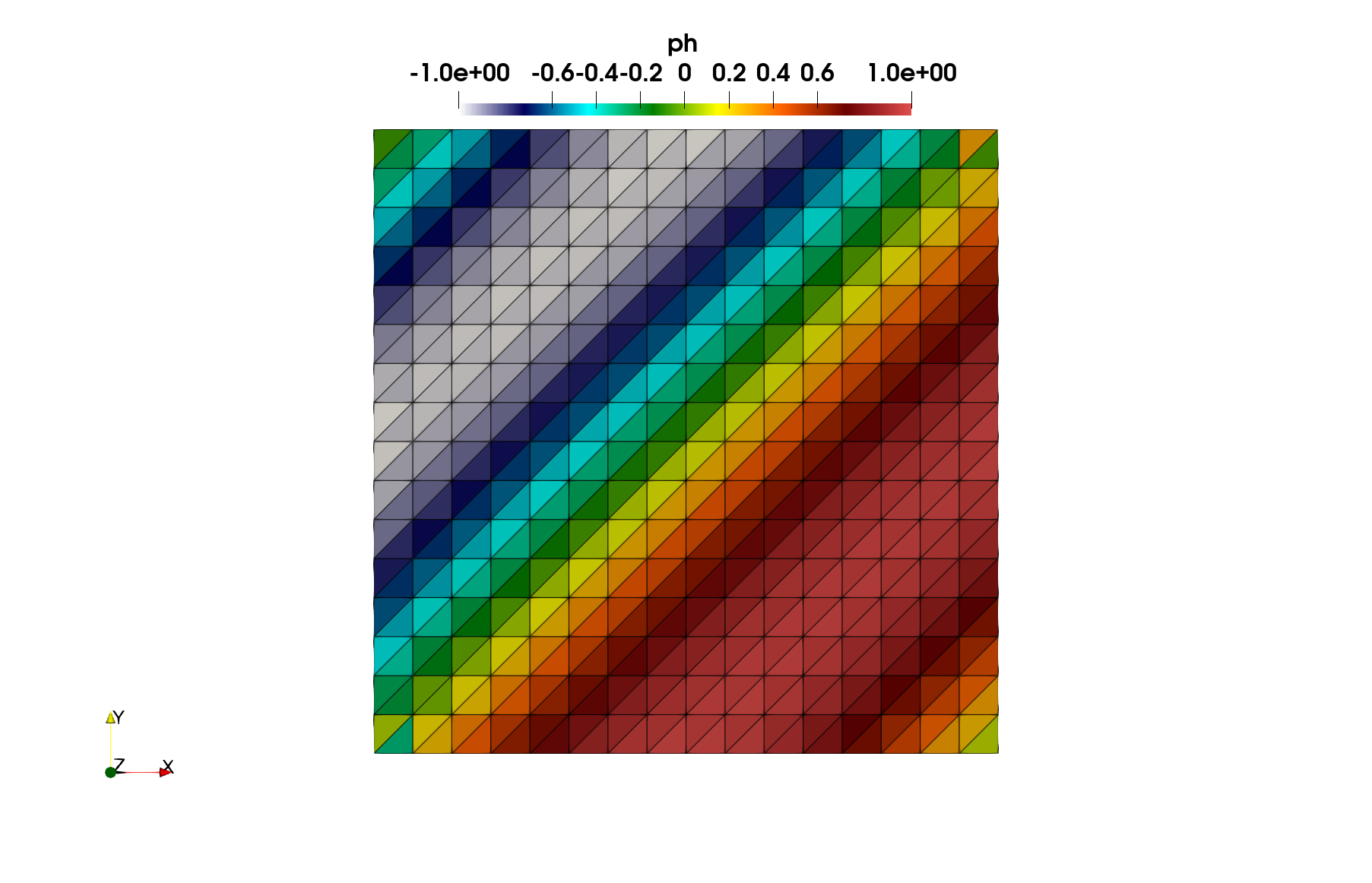}\\
			\end{minipage}
			\caption{Test \ref{subsec:squaredomain}. Comparison between the computed pressure $p_h$ on different polygonal meshes.  }
			\label{fig:ph-square2D}
		\end{figure}
		
		\subsection{Robustness with respect to $\nu$}\label{subsec:robustness-nu}
		In this section we aim to test the robustness of the scheme with respect to the viscosity. More precisely, we observe the error behavior when $\nu$ becomes small. For simplicity, the exact solutions are the ones given in section \ref{subsec:squaredomain}, and we take $\mathcal{T}=\mathcal{T}_{3,h}$ (Voronoi mesh).
		
		We consider a given viscosity $\nu=10^{-3j},\, j=1,2,3,4,$ and $\mathbb{K}=\mathbb{I}$. The computed errors and experimental rates of convergence are given in Tables \ref{table-square2D-robustness-nu-k2}-\ref{table-square2D-robustness-nu-k3}. We observe small variations in the computed errors between taking $\nu=10^{-3}$ and the rest, that do not affect the convergence rate. This can also be said about the present results and the ones given in Section \ref{subsec:squaredomain} for $\mathcal{T}_{3,h}$. The pattern is repeated for the scheme orders $k=2,3$, confirming the theoretical robustness with respect to the viscosity predicted in Section \ref{sec:abstract-error-analysis}.
		\begin{table}[t!]
			\setlength{\tabcolsep}{4.5pt}
			\centering 
			\caption{Example \ref{subsec:robustness-nu}. Error history  with respect to different values of $\nu$ on the unit square domain for $\bu_h$ and $p_h$. Here, we consider $k=2$ as the scheme order.}
			{\small\begin{tabular}{|c|rcccccc|} 
					\hline\hline
					$\nu$&N   &   $h$  & $\mathrm{e}(\bu)$  &   $r(\bu)$   &   $\mathrm{e}(p)$  &   $r(p)$  &  $\Vert\nabla\cdot\bu_h\Vert_{0,\Omega}$  \\
					\hline 
					\hline
					\multirow{5}{0.04\linewidth}{$10^{-3}$}
					&64 & 0.162 & 2.19e+00 & $\star$ & 9.48e-02 & $\star$ & 7.04e-03  \\
					&256 & 0.089 & 4.95e-01 & 2.14 & 3.02e-02 & 1.65 & 8.14e-04  \\
					&1024 & 0.046 & 1.28e-01 & 1.96 & 6.72e-03 & 2.17 & 1.10e-04  \\
					&4096 & 0.021 & 3.03e-02 & 2.08 & 1.62e-03 & 2.05 & 1.92e-05  \\
					&16384 & 0.011 & 7.54e-03 & 2.00 & 3.68e-04 & 2.14 & 1.10e-06  \\
					\hline
					\hline
					\multirow{5}{0.04\linewidth}{$10^{-6}$}
					&64 & 0.162 & 2.37e+00 & $\star$ & 9.40e-02 & $\star$ & 7.04e-03  \\
					&256 & 0.089 & 5.61e-01 & 2.08 & 3.00e-02 & 1.65 & 8.14e-04  \\
					&1024 & 0.046 & 1.49e-01 & 1.91 & 6.66e-03 & 2.17 & 1.10e-04  \\
					&4096 & 0.021 & 3.37e-02 & 2.14 & 1.61e-03 & 2.05 & 1.92e-05  \\
					&16384 & 0.011 & 8.27e-03 & 2.03 & 3.65e-04 & 2.14 & 1.10e-06  \\
					\hline
					\hline
					\multirow{5}{0.04\linewidth}{$10^{-9}$}
					&64 & 0.162 & 2.37e+00 & $\star$ & 9.40e-02 & $\star$ & 7.04e-03  \\
					&256 & 0.089 & 5.61e-01 & 2.08 & 3.00e-02 & 1.65 & 8.14e-04  \\
					&1024 & 0.046 & 1.49e-01 & 1.91 & 6.66e-03 & 2.17 & 1.10e-04  \\
					&4096 & 0.021 & 3.39e-02 & 2.14 & 1.61e-03 & 2.05 & 1.92e-05  \\
					&16384 & 0.011 & 8.44e-03 & 2.01 & 3.65e-04 & 2.14 & 1.10e-06  \\
					\hline
					\hline
					\multirow{5}{0.04\linewidth}{$10^{-12}$}
					&64 & 0.162 & 2.37e+00 & $\star$ & 9.40e-02 & $\star$ & 7.04e-03  \\
					&256 & 0.089 & 5.61e-01 & 2.08 & 3.00e-02 & 1.65 & 8.14e-04  \\
					&1024 & 0.046 & 1.49e-01 & 1.91 & 6.66e-03 & 2.17 & 1.10e-04  \\
					&4096 & 0.021 & 3.39e-02 & 2.14 & 1.61e-03 & 2.05 & 1.92e-05  \\
					&16384 & 0.011 & 8.44e-03 & 2.01 & 3.65e-04 & 2.14 & 1.10e-06  \\
					\hline
					\hline
			\end{tabular}}
			\smallskip			
			\label{table-square2D-robustness-nu-k2}
		\end{table}
		
			\begin{table}[t!]
			\setlength{\tabcolsep}{4.5pt}
			\centering 
			\caption{Example \ref{subsec:robustness-nu}. Error history  with respect to different values of $\nu$ on the unit square domain for $\bu_h$ and $p_h$. Here, we consider $k=3$ as the scheme order.}
			{\small\begin{tabular}{|c|rcccccc|} 
					\hline\hline
					$\nu$&N   &   $h$  & $\mathrm{e}(\bu)$  &   $r(\bu)$   &   $\mathrm{e}(p)$  &   $r(p)$  &  $\Vert\nabla\cdot\bu_h\Vert_{0,\Omega}$  \\
					\hline 
					\hline
					\multirow{5}{0.04\linewidth}{$10^{-3}$}
					&64 & 0.162 & 2.74e-01 & $\star$& 1.15e-02 & $\star$& 1.02e-04  \\
					&256 & 0.089 & 2.96e-02 & 3.21 & 1.44e-03 & 3.00 & 7.42e-06  \\
					&1024 & 0.046 & 4.06e-03 & 2.87 & 1.59e-04 & 3.18 & 1.75e-06  \\
					&4096 & 0.021 & 5.01e-04 & 3.02 & 1.52e-05 & 3.38 & 6.56e-07  \\
					&16384 & 0.011 & 6.54e-05 & 2.94 & 1.66e-06 & 3.20 & 2.32e-07  \\
					\hline
					\hline
					\multirow{5}{0.04\linewidth}{$10^{-6}$}
					&64 & 0.162 & 3.47e-01 & $\star$& 1.14e-02 & $\star$& 1.02e-04  \\
					&256 & 0.089 & 4.45e-02 & 2.96 & 1.43e-03 & 3.00 & 7.42e-06  \\
					&1024 & 0.046 & 5.96e-03 & 2.90 & 1.58e-04 & 3.18 & 1.75e-06  \\
					&4096 & 0.021 & 6.80e-04 & 3.13 & 1.51e-05 & 3.38 & 6.56e-07  \\
					&16384 & 0.011 & 8.09e-05 & 3.07 & 1.65e-06 & 3.20 & 2.32e-07  \\
					\hline
					\hline
					\multirow{5}{0.04\linewidth}{$10^{-9}$}
					&64 & 0.162 & 3.47e-01 & $\star$& 1.14e-02 & $\star$& 1.02e-04  \\
					&256 & 0.089 & 4.46e-02 & 2.96 & 1.43e-03 & 3.00 & 7.42e-06  \\
					&1024 & 0.046 & 6.00e-03 & 2.89 & 1.58e-04 & 3.18 & 1.75e-06  \\
					&4096 & 0.021 & 6.93e-04 & 3.11 & 1.51e-05 & 3.38 & 6.56e-07  \\
					&16384 & 0.011 & 8.68e-05 & 3.00 & 1.65e-06 & 3.20 & 2.32e-07  \\
					\hline
					\hline
					\multirow{5}{0.04\linewidth}{$10^{-12}$}
					&64 & 0.162 & 3.47e-01 & $\star$& 1.14e-02 & $\star$& 1.02e-04  \\
					&256 & 0.089 & 4.46e-02 & 2.96 & 1.43e-03 & 3.00 & 7.42e-06  \\
					&1024 & 0.046 & 6.00e-03 & 2.89 & 1.58e-04 & 3.18 & 1.75e-06  \\
					&4096 & 0.021 & 6.93e-04 & 3.11 & 1.51e-05 & 3.38 & 6.56e-07  \\
					&16384 & 0.011 & 8.68e-05 & 3.00 & 1.65e-06 & 3.20 & 2.32e-07  \\
					\hline
					\hline
			\end{tabular}}
			\smallskip			
			\label{table-square2D-robustness-nu-k3}
		\end{table}
		
		\subsection{Applications with several boundary conditions}
		This experiment aims to test the behavior of our method in applications where several geometric conditions are assumed.  Let $\boldsymbol{g}_1$ and $\boldsymbol{g}_2$ be two vector fields such that
		$$
		\bu\cdot\boldsymbol{n}=\boldsymbol{g}_1\cdot\nn, \qquad (\beps(\bu))\boldsymbol{n}\cdot \bt = \boldsymbol{g}_2\cdot\bt, \quad \text{ on } \Gamma_N.
		$$		
		Then, the discrete right-hand sides forms with additional Nitsche terms  are given as follow (cf. Section \ref{subsec:VEspaces}).
		$$
		\begin{aligned}
		\widetilde{F}_h(\bv_h) &:= F_h(\bv_h) +  \gamma_{N} \sum_{e \in \cE_h^N}\int_e h_e^{-1} (\boldsymbol{g}_1\cdot\nn) (\bv_h \cdot \nn) \ds - \sum_{e \in \cE_h^N}\int_e (\nn^t(\nu \beps( {\Pi^{\beps,k}_{K_e}} \bv_h) \nn)) (\boldsymbol{g}_1\cdot\nn) \ds \\
		&\hspace{2cm}+  \sum_{e \in \cE_h^N}\int_e (\boldsymbol{g}_2\cdot\bt) (\bv_h \cdot \bt) \ds
	\end{aligned}
	$$
	$$
		\widetilde{G}(q_h) := G(q_h) +  \sum_{e \in \cE_h^N} \int_e  \boldsymbol{g}_1 \cdot (q_h \nn)\ds.
	$$
		We divide the test in three cases, that we detail below.
		
		\subsubsection{Flow past cylinder}\label{subsec:flow_past_cylinder}
		We first focuse on the simulation of a cross-flow around a cylinder between two paralel plates. This phenomena can be characterized in a 2D model, for which we take $R=0.05$ as the radius of the cylinder, and the length and height of the channel are given by $L=0.82$ and $H=0.41$, respectively. The resulting domain is of the form
		$$
		\Omega:= (0,L)\times (0,H) \backslash D_R(x_c,y_c),
		$$
		where $D_R$ is the disk of radius $R$ and centered at $(x_c,y_c)=(0.2,0.2)$. We divide the boundaries as $\partial\Omega:=\Gamma_{\text{in}}\cup\Gamma_{\text{wall}}\cup\Gamma_{\text{circ}}\cup\Gamma_{\text{out}}$, corresponding to the inlet, walls, inner circle and outlet boundary, respectively. Each of these subdomains have the following data. 
		$$
		\begin{aligned}
			&\bu = \boldsymbol{0}, \quad \text{ on } \Gamma_w\cup\Gamma_{\text{circ}},\\
			&\bu\cdot\nn = \boldsymbol{g}_1\cdot\nn,\quad(\nu\beps(\bu))\nn\cdot\bt = \boldsymbol{g}_1\cdot\bt,  \quad \text{ with }\boldsymbol{g}_1(x,y)=
			\left(\frac{6u_{\max}y(H-y)}{H^2},0\right)^{\texttt{t}}, \quad\text{ on } \Gamma_{\text{in}},\\
			&(\nu\beps(\bu) - p\mathbb{I})\nn = \boldsymbol{0}, \quad \text{on } \Gamma_{\text{out}},
		\end{aligned}
		$$
		with $u_{\max}=2$. The viscosity is taken to be $\nu=1$.  Note that $\boldsymbol{g}_1$ corresponds to a Poiseuille flow and the condition over $\Gamma_{\text{out}}$ is a free flow boundary condition, which is beyond the scope of the proposed theory.
		
		We have depicted the computed velocity data in Figure \ref{fig:flow-cylinder-velocity}. Here, we observe that the boundary conditions are applied correctly, having no-slip around the inner circle and the walls. Also, by comparing the plots of $\boldsymbol{u}_{1,h}$ and $\boldsymbol{u}_{2,h}$ in the outflow we observe that is similar to that of Stokes and Navier-stokes on a channel that ends at $\Gamma_{\text{out}}$. To observe the behavior at the inlet, we have also plotted the velocity profiles in Figure \ref{fig:flow_cylinder_velocities-inlet}. Here, we observe that the parabolic and zero-tangential stress profiles are properly imposed. We finish this part of the experiment showing the surface plot of the pressure drop on two different meshes in Figure \ref{fig:flow-cylinder-pressure}.
		\begin{figure}[!hbt]\centering
		\begin{minipage}{0.49\linewidth}\centering
			{\footnotesize $\bu_{1,h}$}\\
			\includegraphics[scale=0.13, trim= 8cm 16cm 6cm 13.5cm, clip]{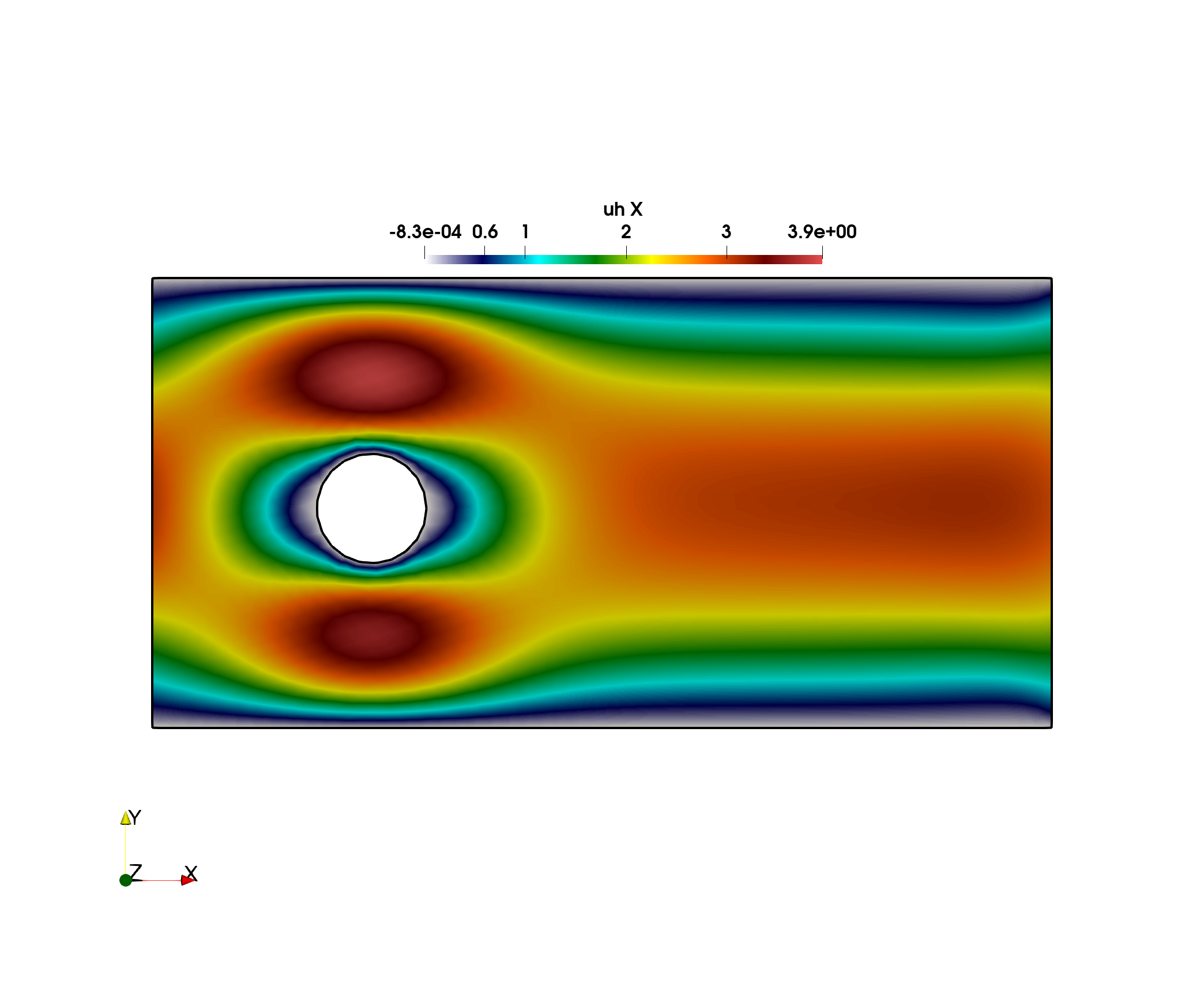}
		\end{minipage}
		\begin{minipage}{0.49\linewidth}\centering
			{\footnotesize $\bu_{2,h}$}\\
			\includegraphics[scale=0.13, trim= 8cm 16cm 6cm 13.5cm, clip]{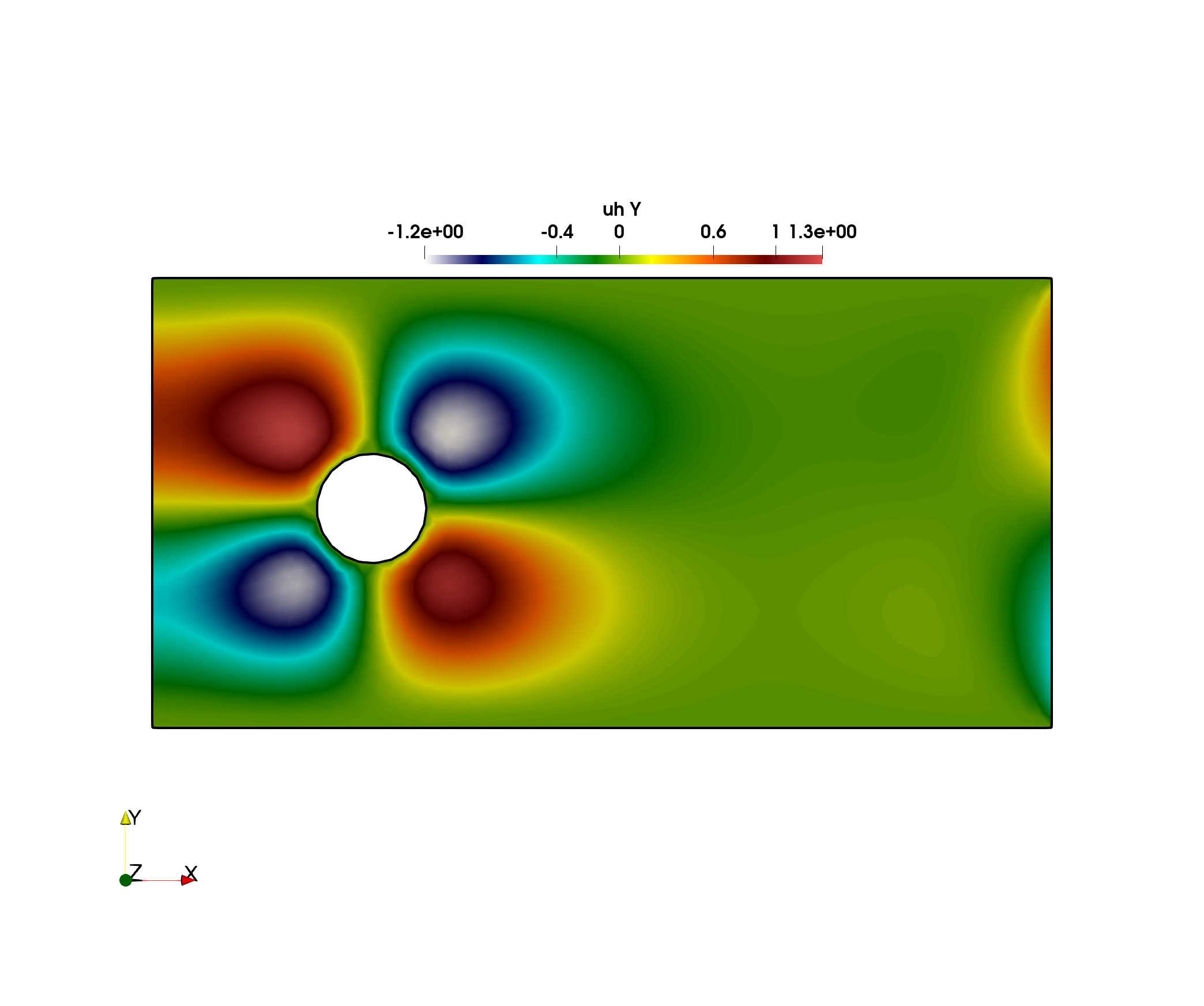}
		\end{minipage}\\
		\begin{minipage}{0.49\linewidth}\centering
			{\footnotesize $|\bu_{h}|$}\\
			\includegraphics[scale=0.13, trim= 8cm 16cm 6cm 13.5cm, clip]{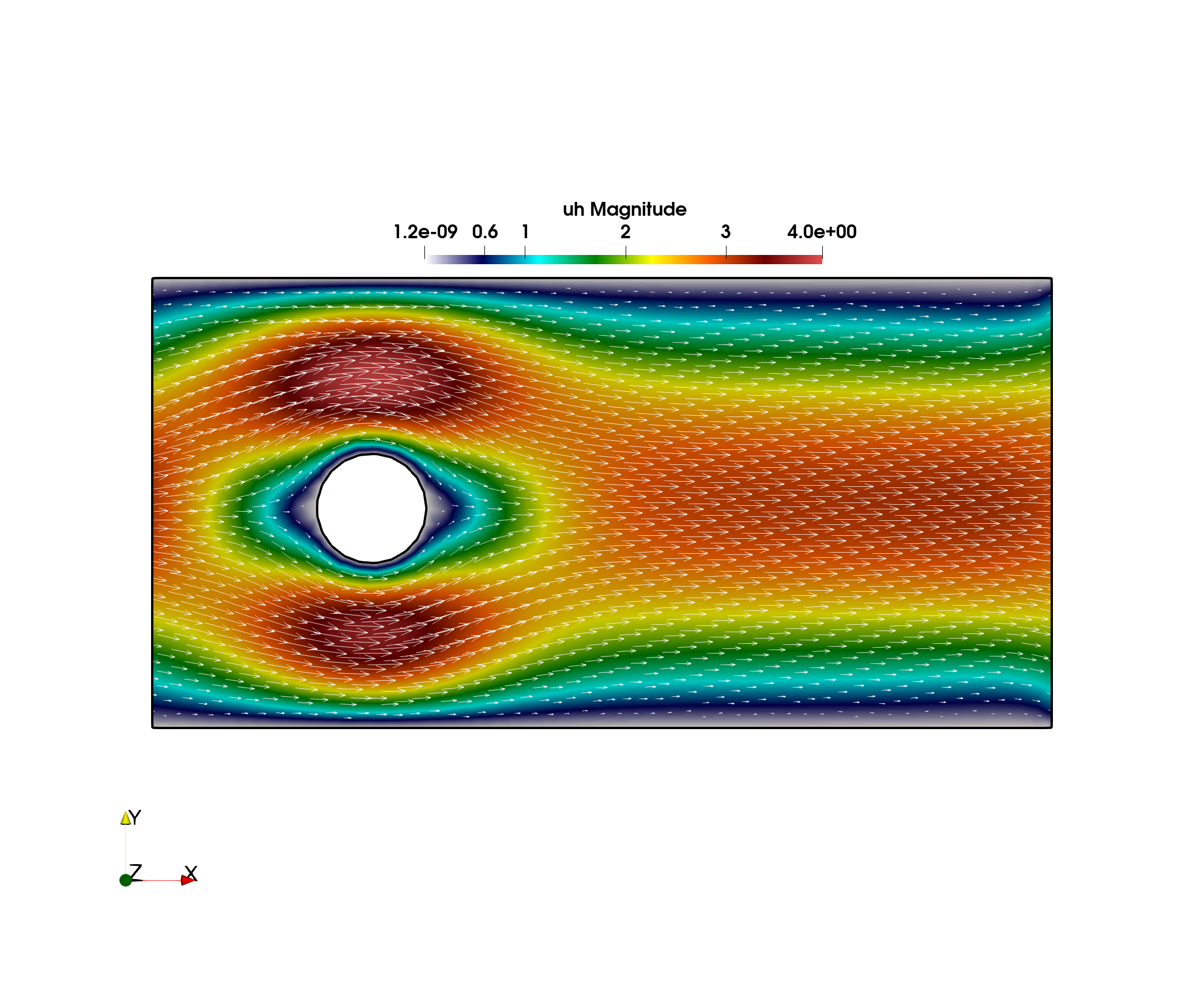}
		\end{minipage}
		\begin{minipage}{0.49\linewidth}\centering
			{\footnotesize $|\bu_{h}|$}\\
			\includegraphics[scale=0.13, trim= 8cm 16cm 6cm 13.5cm, clip]{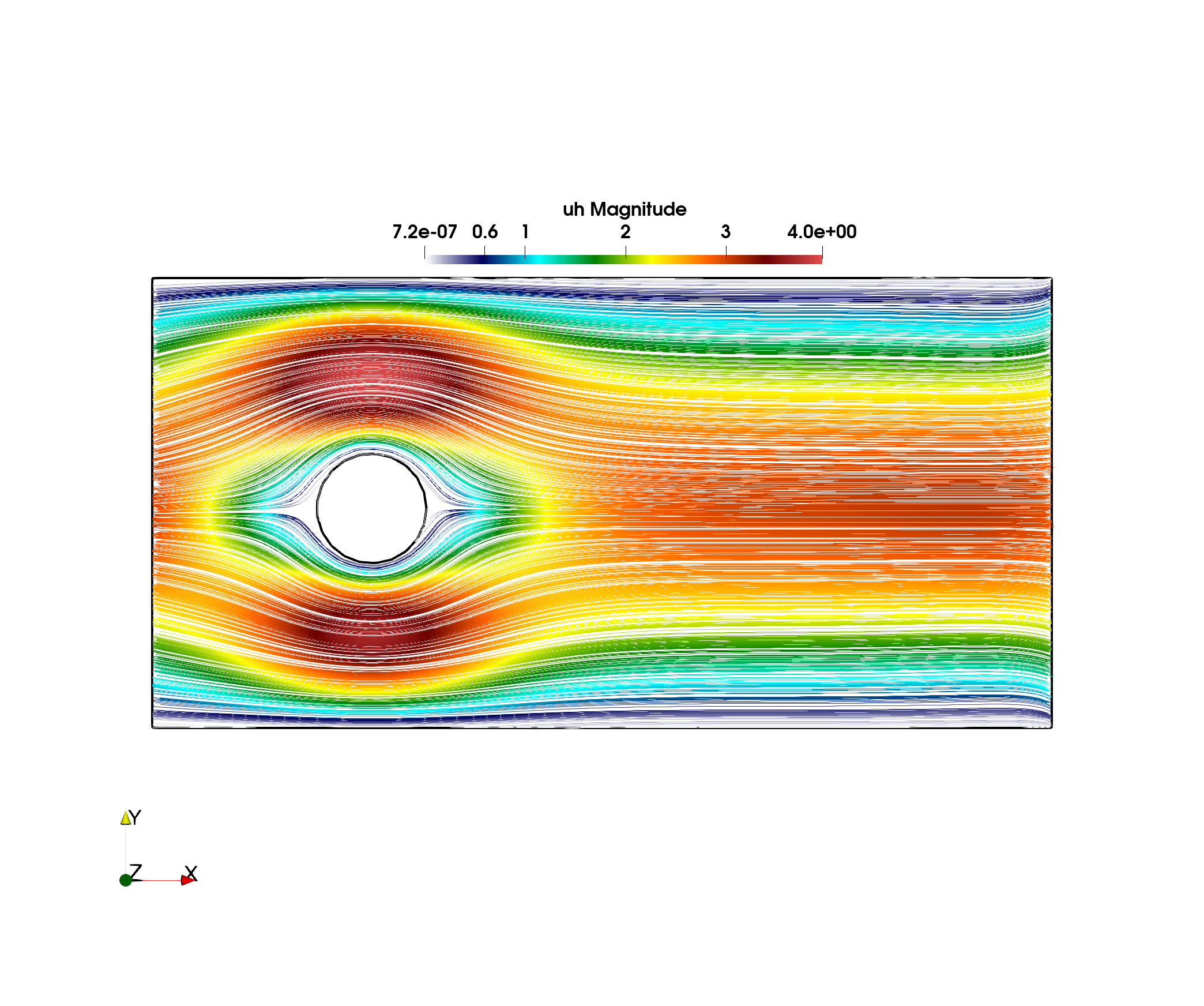}
		\end{minipage}
		\caption{Test \ref{subsec:flow_past_cylinder}. Computed  scalar components of the velocity field (top) along with the corresponding velocity field and streamlines (bottom) in the cross-flow model.}
		\label{fig:flow-cylinder-velocity}
		\end{figure}
		
		\begin{figure}[!hbt]\centering
			\begin{minipage}{0.49\linewidth}\centering
			\includegraphics[scale=0.4]{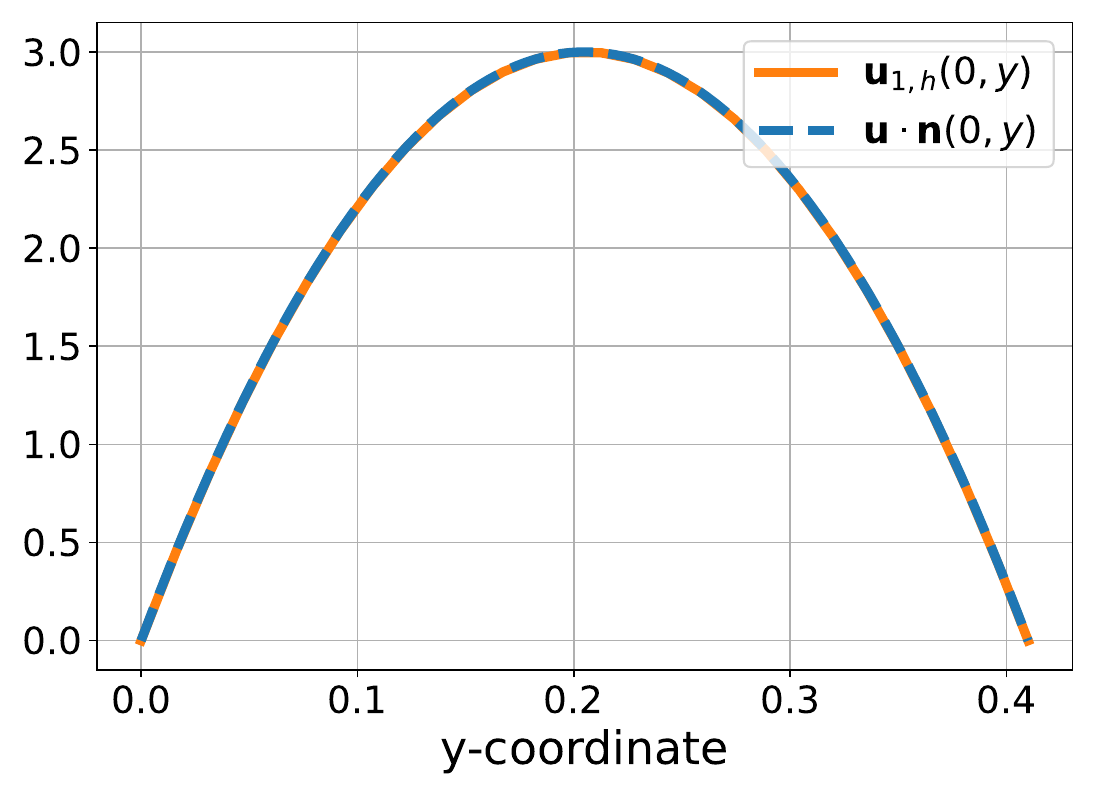}
			\end{minipage}
			\begin{minipage}{0.49\linewidth}\centering
				\includegraphics[scale=0.4]{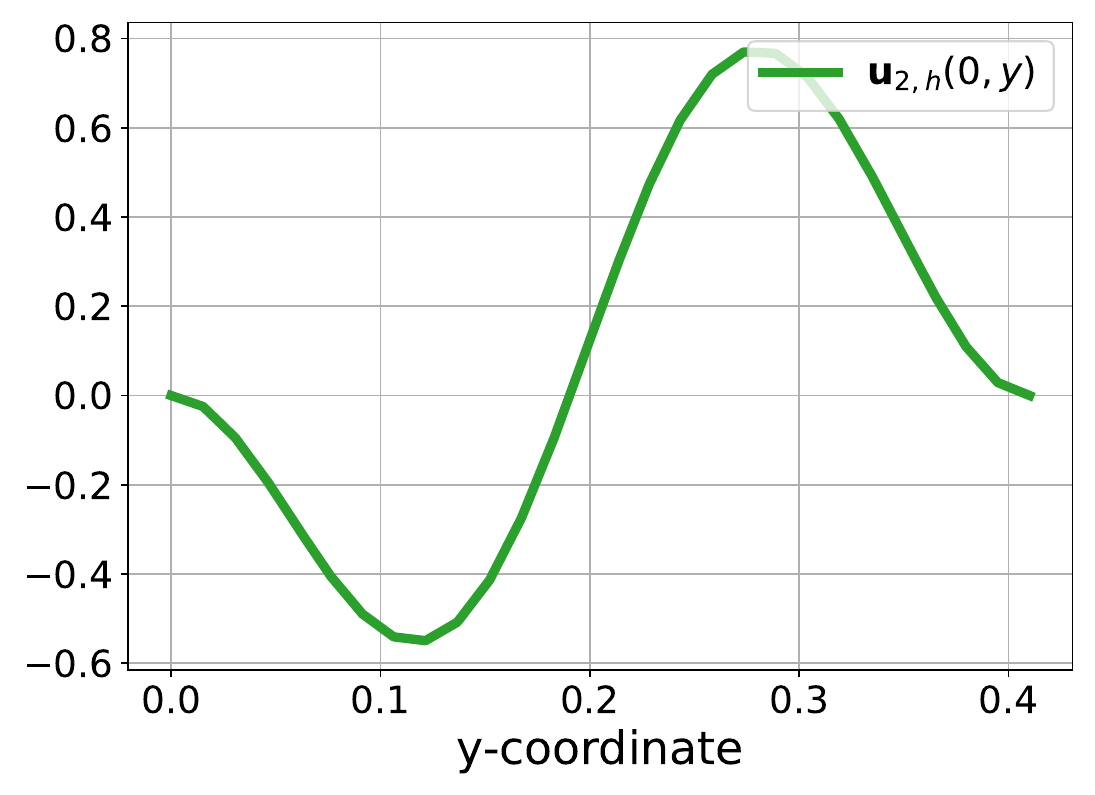}
			\end{minipage}
			\caption{Test \ref{subsec:flow_past_cylinder}. Plots of the resulting computed inlet velocity components weakly imposed at $\Gamma_{\text{in}}$. The behavior of $\bu_{2,h}$ results from imposing $(\nu\beps(\bu))\nn\cdot\bt = \boldsymbol{g}_1\cdot\bt$.}
			\label{fig:flow_cylinder_velocities-inlet}
		\end{figure}
		
		\begin{figure}[!hbt]\centering
			\begin{minipage}{0.49\linewidth}\centering
				{\footnotesize $p_h,\; \mathcal{T}_{1,h}$}\\
				\includegraphics[scale=0.13, trim= 8cm 16cm 6cm 13.7cm, clip]{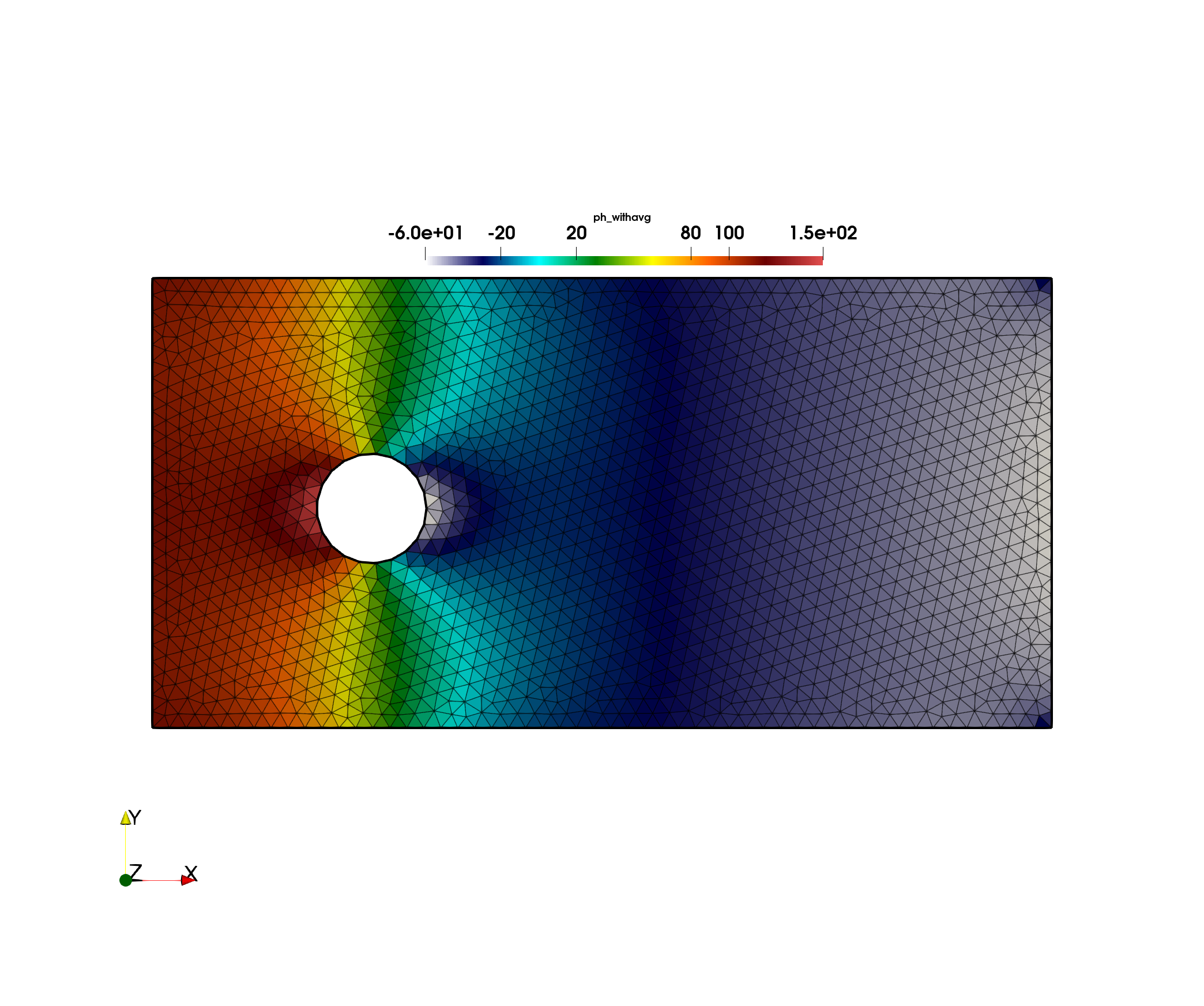}
			\end{minipage}
			\begin{minipage}{0.49\linewidth}\centering
				{\footnotesize $p_h,\; \mathcal{T}_{3,h}$}\\
				\includegraphics[scale=0.13, trim= 8cm 16cm 6cm 13.7cm, clip]{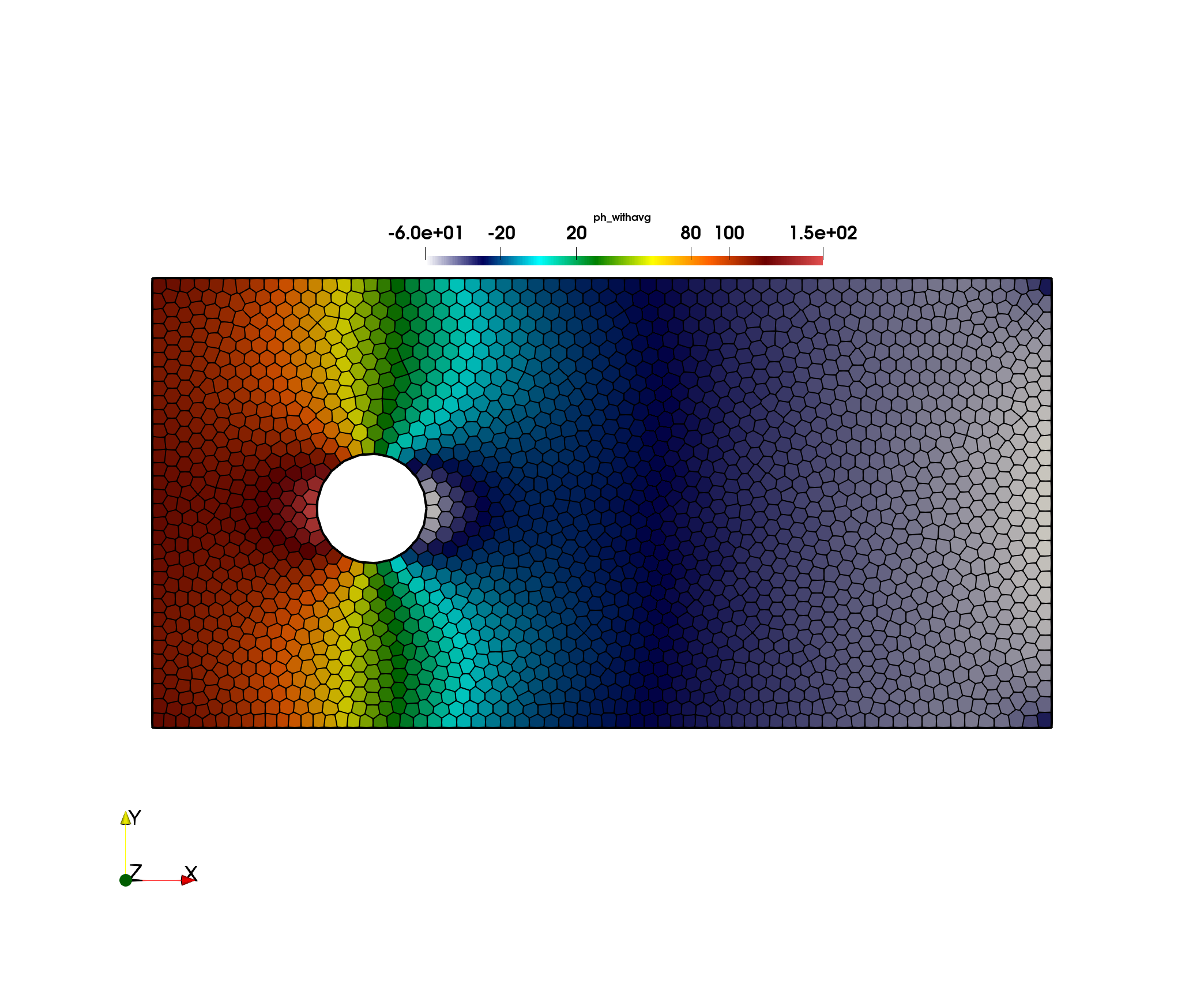}
			\end{minipage}
			\caption{Test \ref{subsec:flow_past_cylinder}. Comparison between computed pressures in the cross-flow model using $\mathcal{T}_{1,h}$ and $\mathcal{T}_{3,h}$. }
			\label{fig:flow-cylinder-pressure}
		\end{figure}

		\subsubsection{Flow into a backwards facing step}\label{subsec:backward_facing_step}
		A classical benchmark test for Stokes, Navier-Stokes or Brinkman equations is the backward facing step. Let us consider $H:=1$ and a channel given by
		$$
		\Omega:=(0,9H)\times (0,2H) \backslash \{(0,2H)\times (0,H)\},
		$$
		where we have a backward facing step in $(2H,H)$. We consider as the inlet region the left boundary, where we weakly impose 
		$$\bu\cdot\nn = \boldsymbol{g}_1\cdot\nn,\quad(\nu\beps(\bu))\nn\cdot\bt = \boldsymbol{g}_1\cdot\bt,  \quad \text{with }\quad\boldsymbol{g}_1(x,y)=
		\left(u_{\max} - 0.5(y-1.5H)^ 2,0\right)^{\texttt{t}}.$$
		As before, the inflow consist of a Poiseuille type flow, with $u_{\max}=0.125$. In the outlet (right boundary), we impose a zero stress boundary condition $(\nu\boldsymbol{\epsilon}(\bu) - p\mathbb{I})\boldsymbol{n}=\boldsymbol{0}$, and no-slip boundary conditions on the rest of $\partial\Omega$. The viscosity is taken to be $\nu=1$. The experiment was carried in two different Voronoi meshes: one with 4096 elements, and the other with  16384. The behavior for both meshes where similar, and we present the results with the coarsest mesh. The physical parameters were taken as $\mathbb{K}= \mathbb{I}$ and $\nu=1$. The approximated velocity field and pressure are presented in Figure \ref{fig:backward-facing-step}. Here, we have some well-known phenomena that include singularities at the reentrant corner and recirculation zones. The negative pressure values is valid due to the incompressible formulation of the problem, which allows to write the variable $p$ in terms of the gauge pressure. To observe the weak imposition of the boundary conditions at the inlet, we depict the velocity components in Figure~\ref{fig:backward-facing-step-inlet}, where we have a parabolic behavior of the first velocity component together with the tangential stress provided by the Poiseuille flow.
		
		\begin{figure}[!hbt]
			\centering
			\begin{minipage}{1\linewidth}\centering
				{\footnotesize \hspace{1.5cm}$\bu_{2,h}$}\\
				\includegraphics[scale=0.18, trim= 6cm 23cm 6cm 21.5cm, clip]{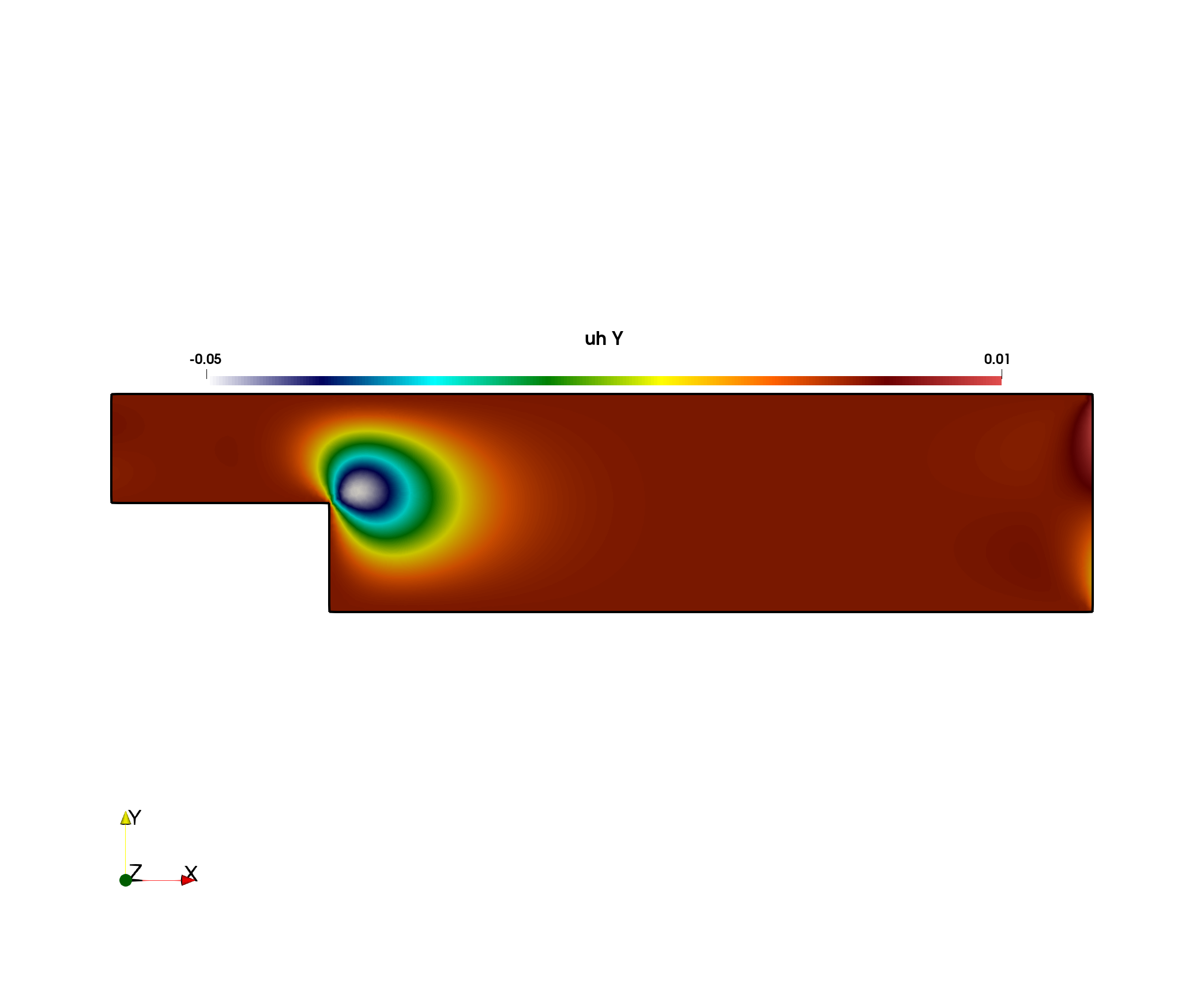}
			\end{minipage}\\
			\begin{minipage}{1\linewidth}\centering
				{\footnotesize \hspace{1.5cm}$|\bu_h|$}\\
				\includegraphics[scale=0.18, trim= 6cm 23cm 6cm 21.5cm, clip]{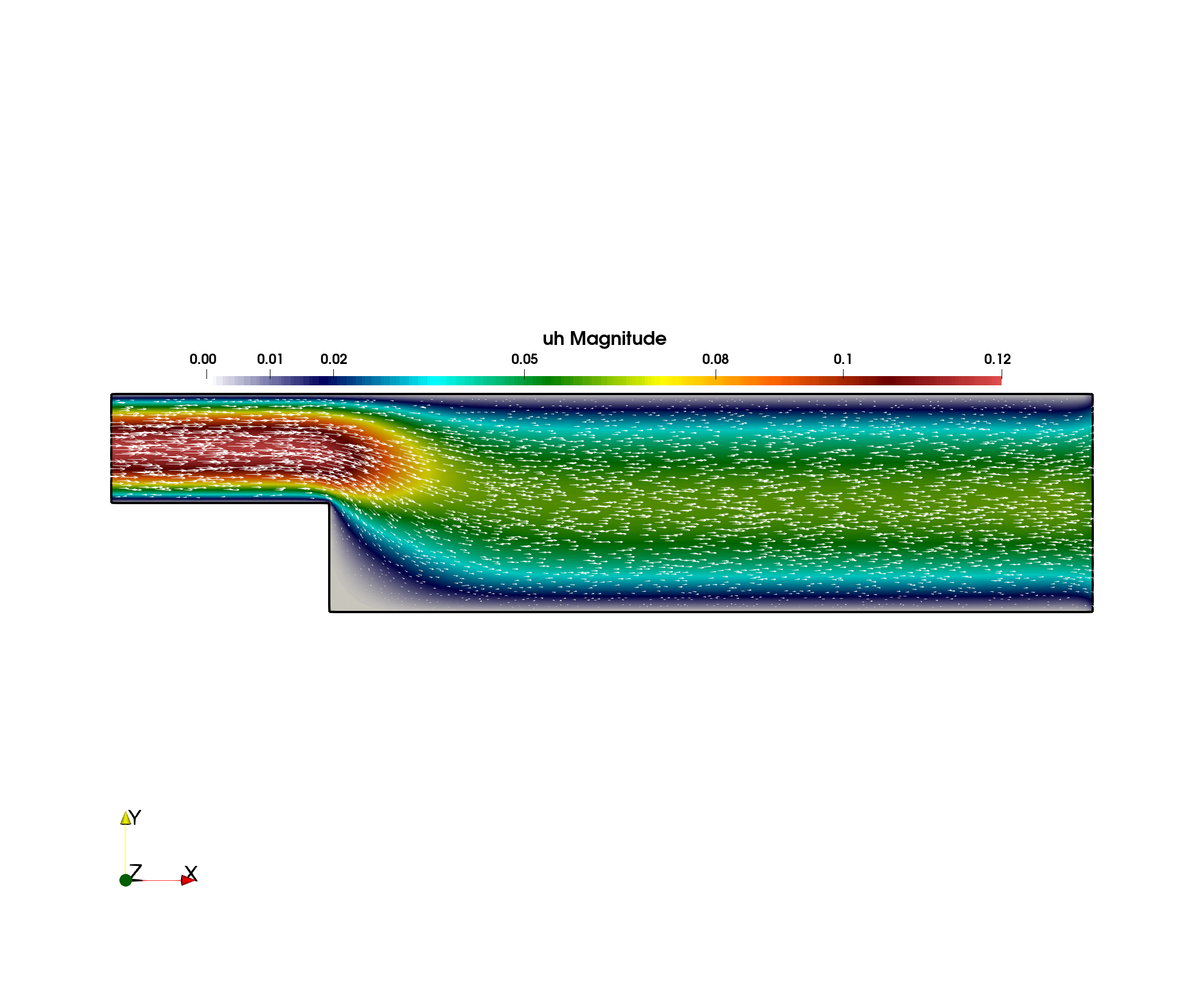}
			\end{minipage}\\
			\begin{minipage}{1\linewidth}\centering
				{\footnotesize \hspace{1.5cm}$|\bu_{h}|$}\\
				\includegraphics[scale=0.18, trim= 6cm 23cm 6cm 21.5cm, clip]{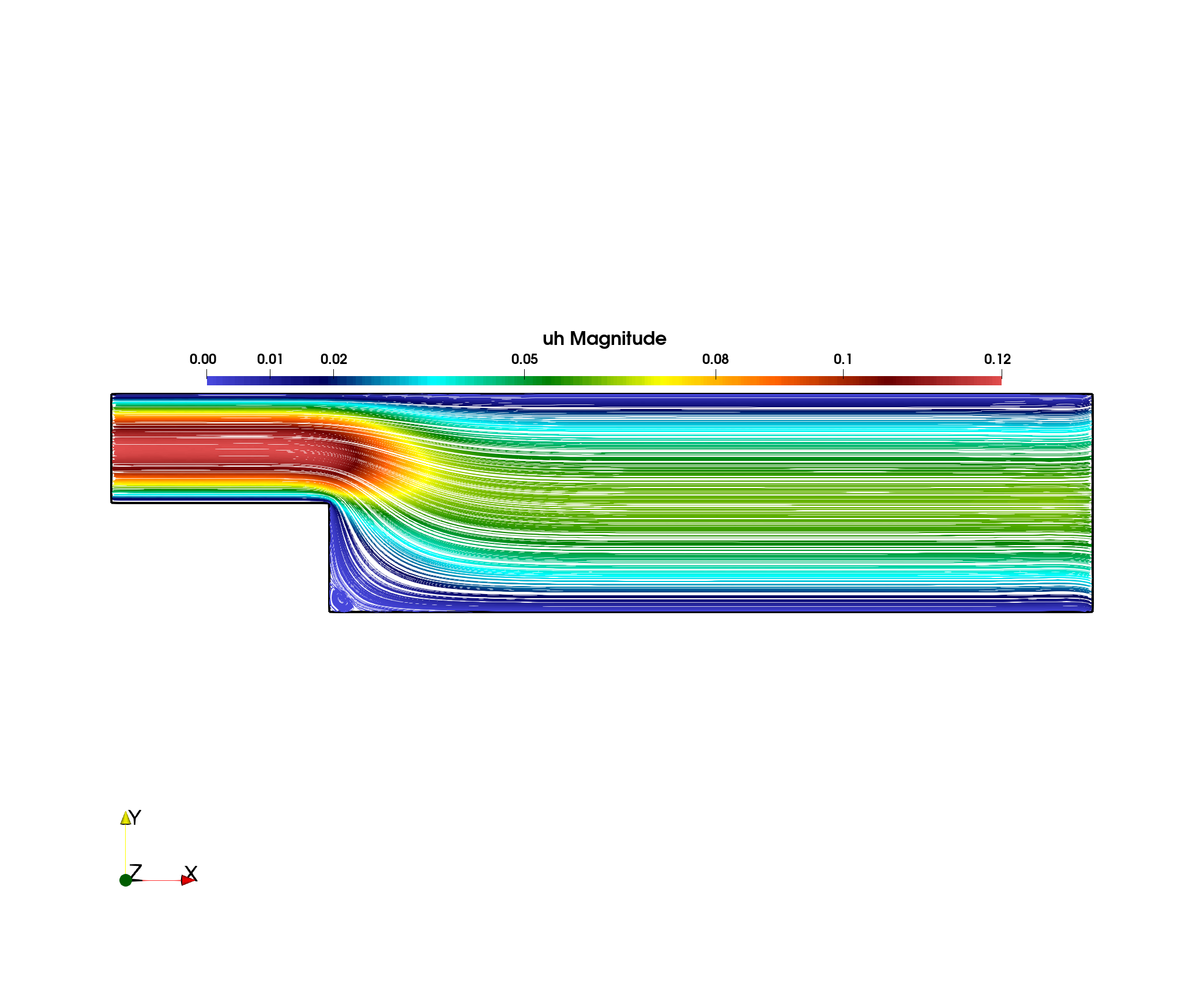}
			\end{minipage}\\
			\begin{minipage}{1\linewidth}\centering
				{\footnotesize \hspace{1.5cm}$p_h$}\\
				\includegraphics[scale=0.18, trim= 6cm 23cm 6cm 21.5cm, clip]{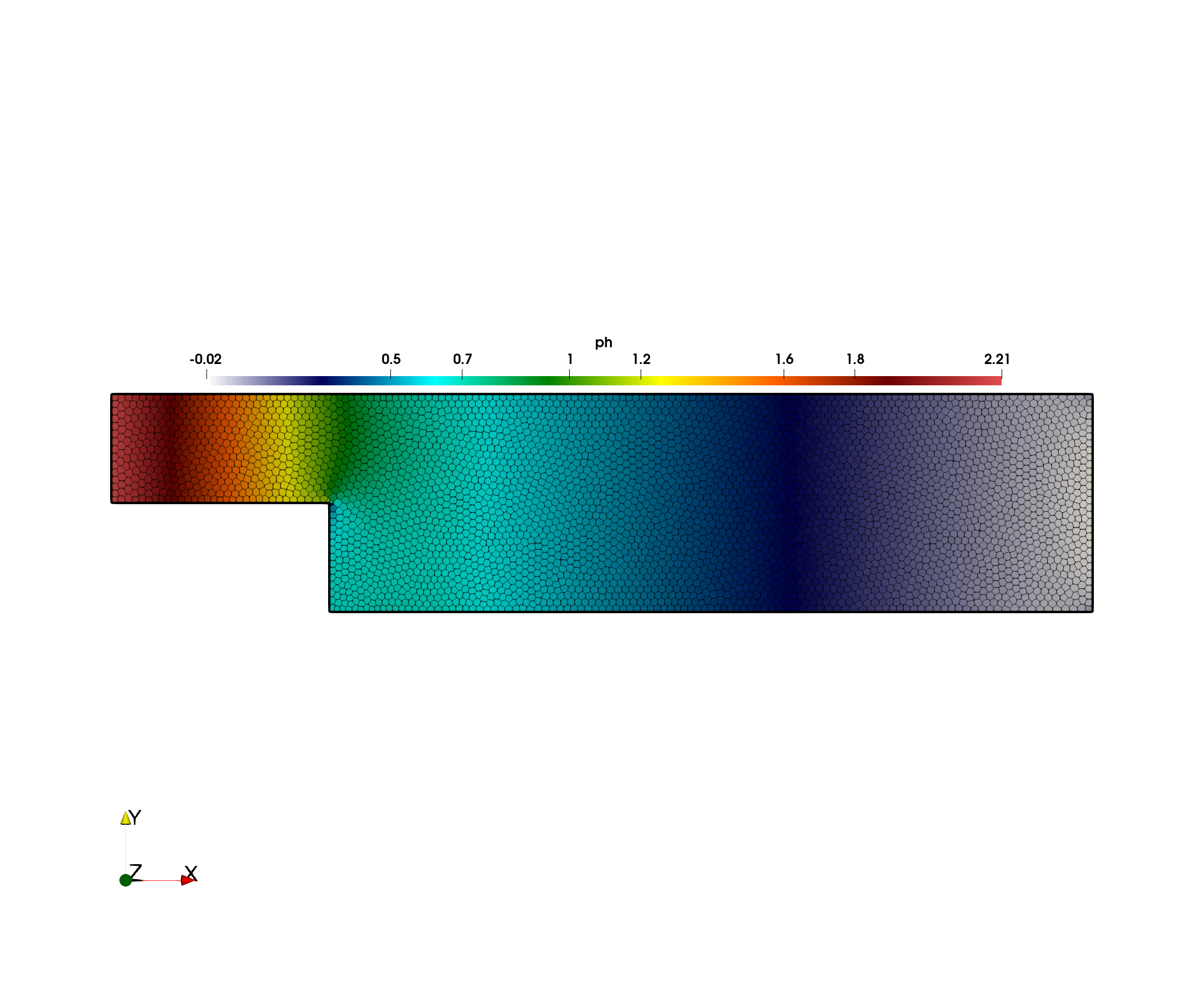}
			\end{minipage}\\
			\caption{Test \ref{subsec:backward_facing_step}. Second velocity compoment (top) Velocity field streamlines (two in the middle) with pressure distribution (bottom), computed in a Voronoi mesh with 4096 elements.}
			\label{fig:backward-facing-step}
		\end{figure}
		
			\begin{figure}[!hbt]\centering
			\begin{minipage}{0.49\linewidth}\centering
				\includegraphics[scale=0.4]{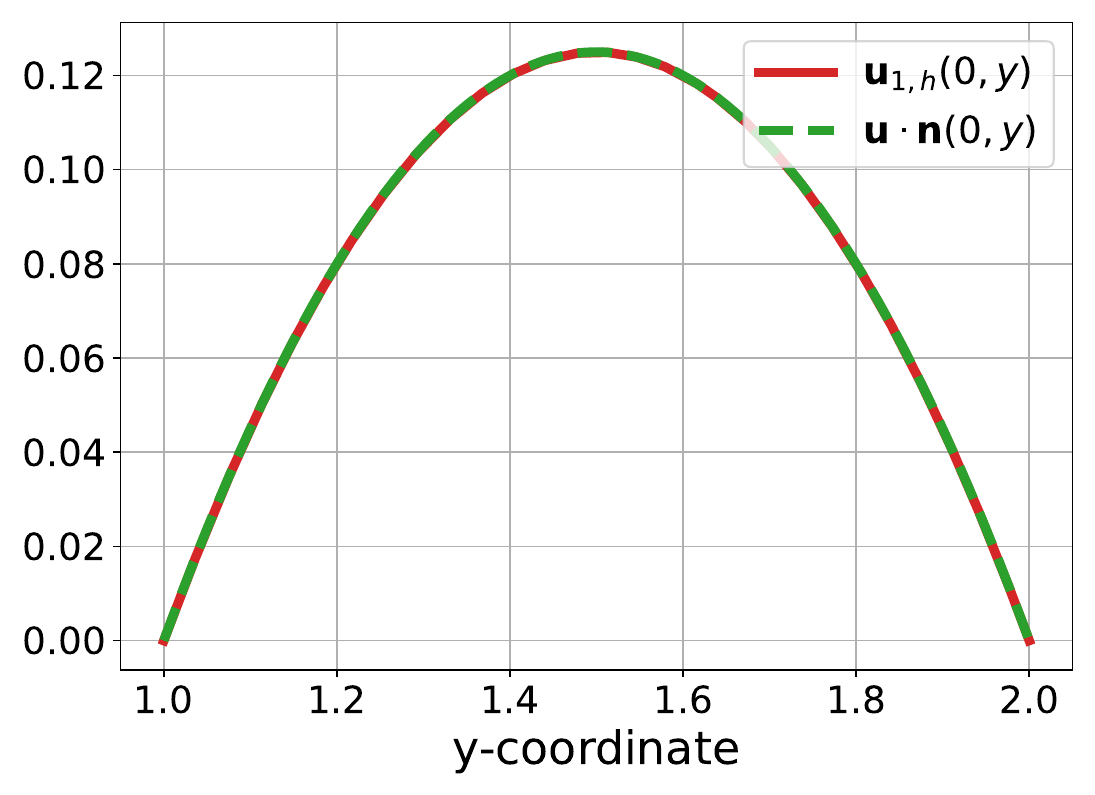}
			\end{minipage}
			\begin{minipage}{0.49\linewidth}\centering
				\includegraphics[scale=0.4]{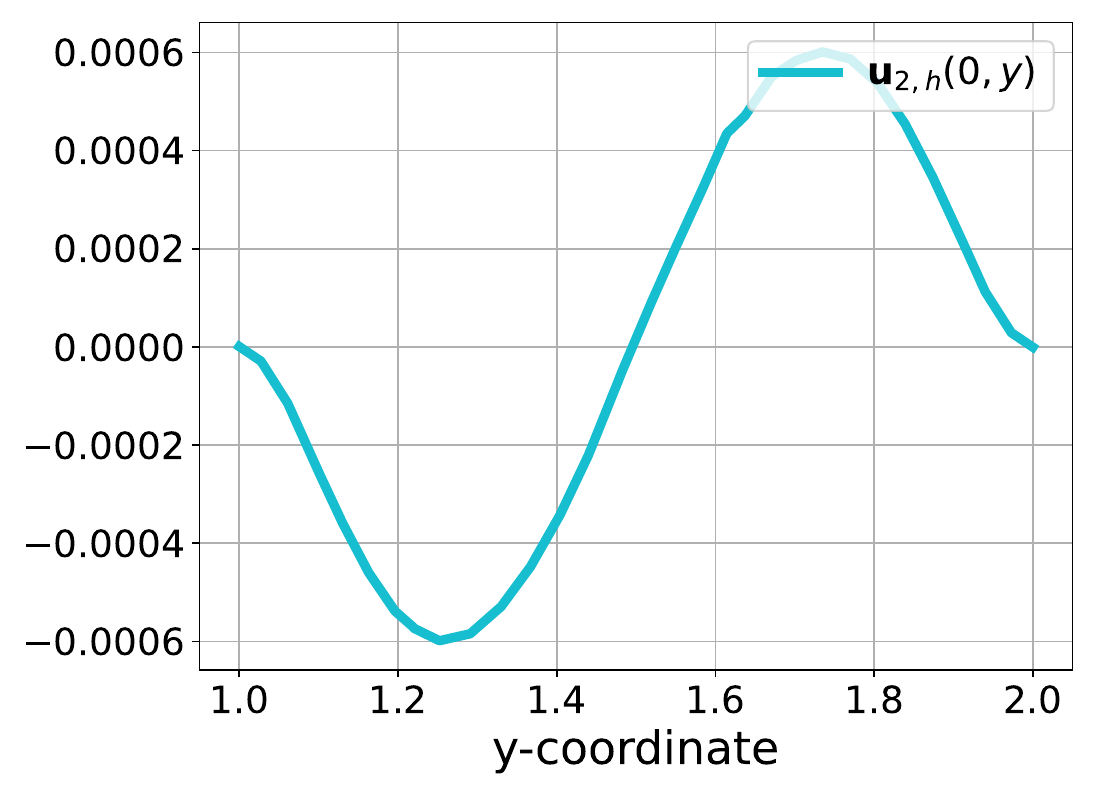}
			\end{minipage}
			\caption{Test \ref{subsec:backward_facing_step}. Plots of the resulting computed inlet velocity components weakly imposed at the inlet for the flow into a backwards facing step. The behavior of $\bu_{2,h}$ results from imposing $(\nu\beps(\bu))\nn\cdot\bt = \boldsymbol{g}_1\cdot\bt$.}
			\label{fig:backward-facing-step-inlet}
		\end{figure}

		\subsubsection{The lid-driven cavity flow}\label{subsec:lid_driven}
		We end this section by performing the classical lid-driven cavity test where we model the steady flow of an immiscible fluid in a box. We consider the unit square domain $\Omega=(0, 1)^2$. The viscosity is taken to be $\nu=10^{-3}$. 		The test is initially done  with $\mathbb{K}=\kappa\mathbb{I}$, with $\kappa=10^8$. We take $\Gamma_N=\emptyset$ and $\Gamma_D=\Gamma_{\text{wall}}\cup\Gamma_{\text{lid}}$, where $\Gamma_{\text{wall}}$ corresponds to the bottom, left and right boundarires, and $\Gamma_{\text{lid}}$ is the top boundary. We set $\bu\vert_{\Gamma_{\text{lid}}}=(1,0)^{\texttt{t}}$ and $\bu\vert_{\Gamma_{\text{wall}}}=\boldsymbol{0}$. We note that this type of boundary condition is also not covered by the theory. 
		
		The approximate velocities and pressure (displayed in Figure \ref{fig:lid_driven_1e8}) remain stable and corner singularities are clearly observed. Moreover, to assess the robustness of the method with respect to the choice of $\mathbb{K}$ , we run several tests varying $\mathbb{K}$ and keeping  $\nu=10^{-3}$. Plots of the streamlines of each case are presented on Figure \ref{fig:lid_driven_robustness}, where we confirm the stability of the approximation for each choice of $\mathbb{K}$.
		\begin{figure}[!hbt]
			\centering
			\begin{minipage}{0.49\linewidth}\centering
				{\footnotesize \hspace{0cm}$\bu_{1,h}$}\\
				\includegraphics[scale=0.14, trim= 12cm 8cm 12cm 6.3cm, clip]{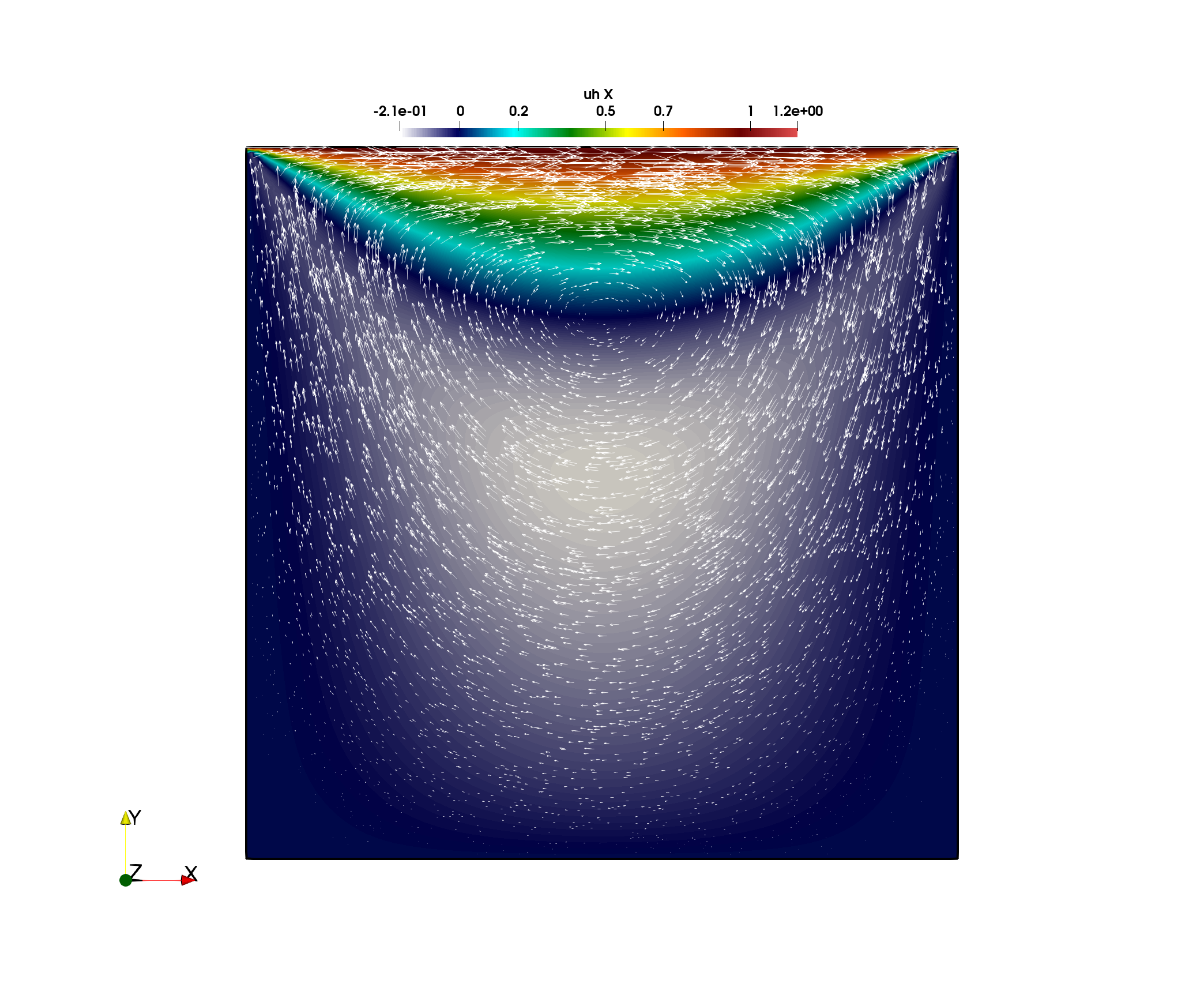}
			\end{minipage}
			\begin{minipage}{0.49\linewidth}\centering
				{\footnotesize \hspace{0cm}$\bu_{2,h}$}\\
				\includegraphics[scale=0.14, trim= 12cm 8cm 12cm 6.3cm, clip]{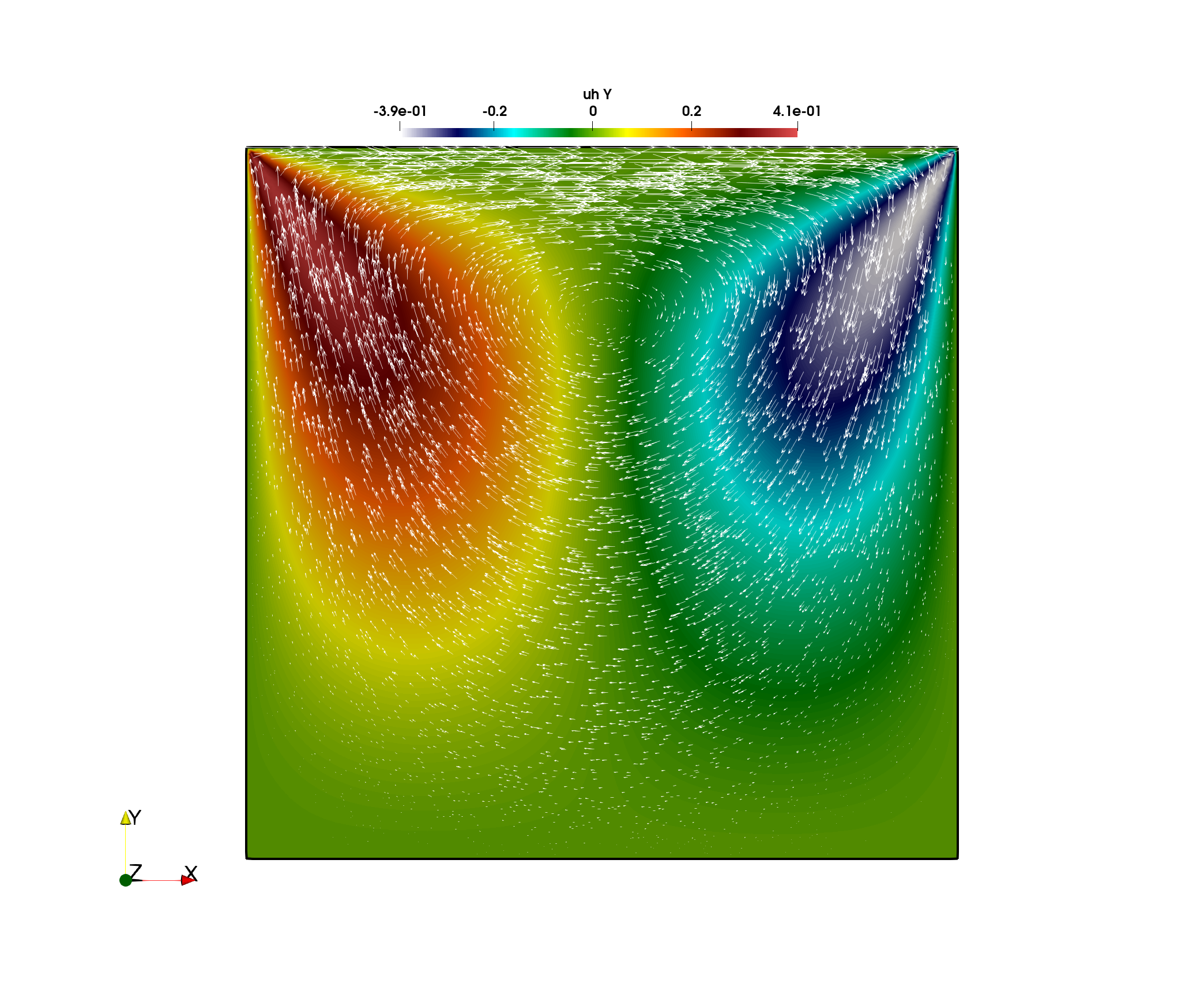}
			\end{minipage}\\
			\begin{minipage}{0.49\linewidth}\centering
				{\footnotesize \hspace{0cm}$|\bu_{h}|$}\\
				\includegraphics[scale=0.14, trim= 12cm 8cm 12cm 6.3cm, clip]{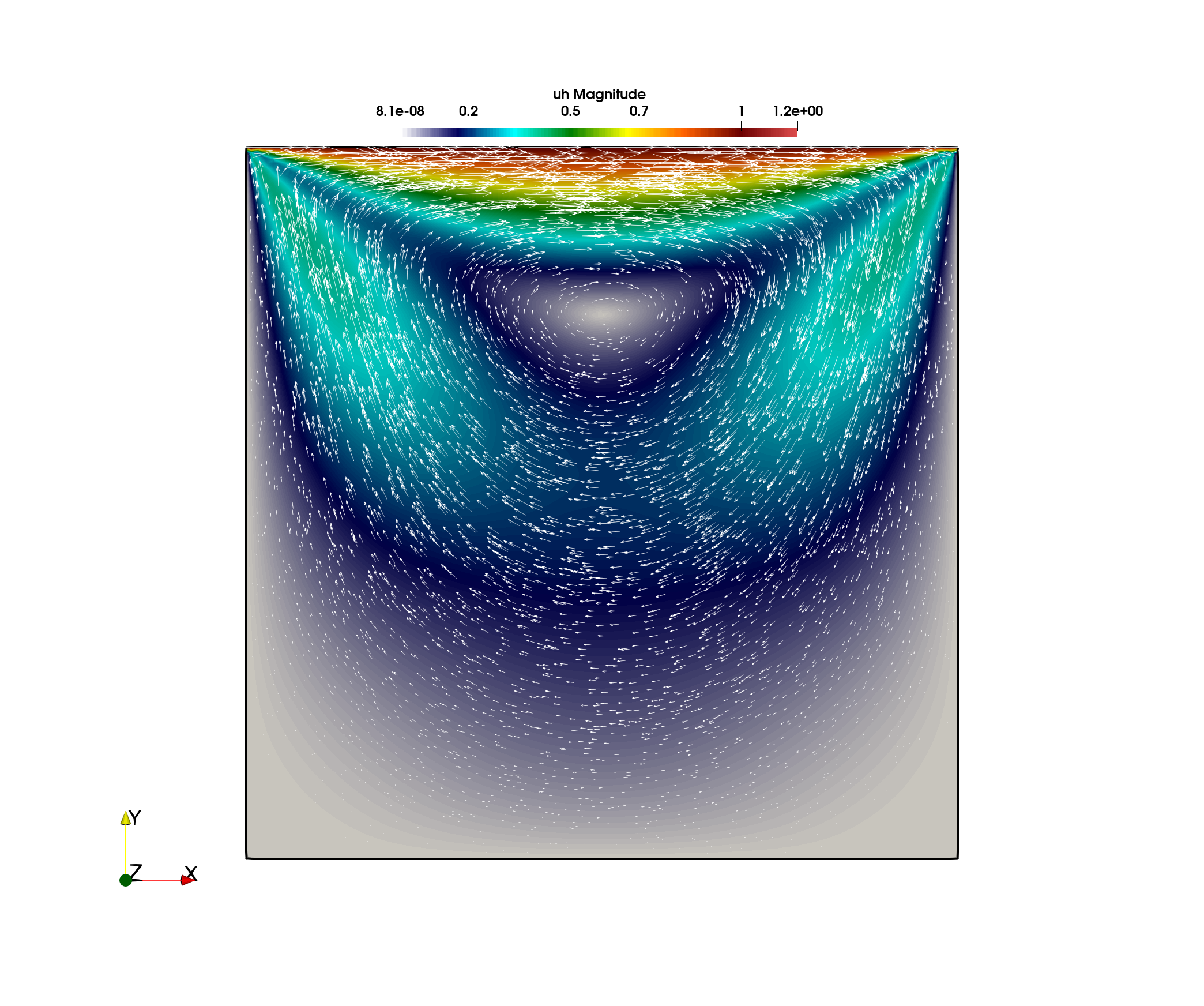}
			\end{minipage}
			\begin{minipage}{0.49\linewidth}\centering
				{\footnotesize \hspace{0cm}$p_h$}\\
				\includegraphics[scale=0.14, trim= 12cm 8cm 12cm 6.3cm, clip]{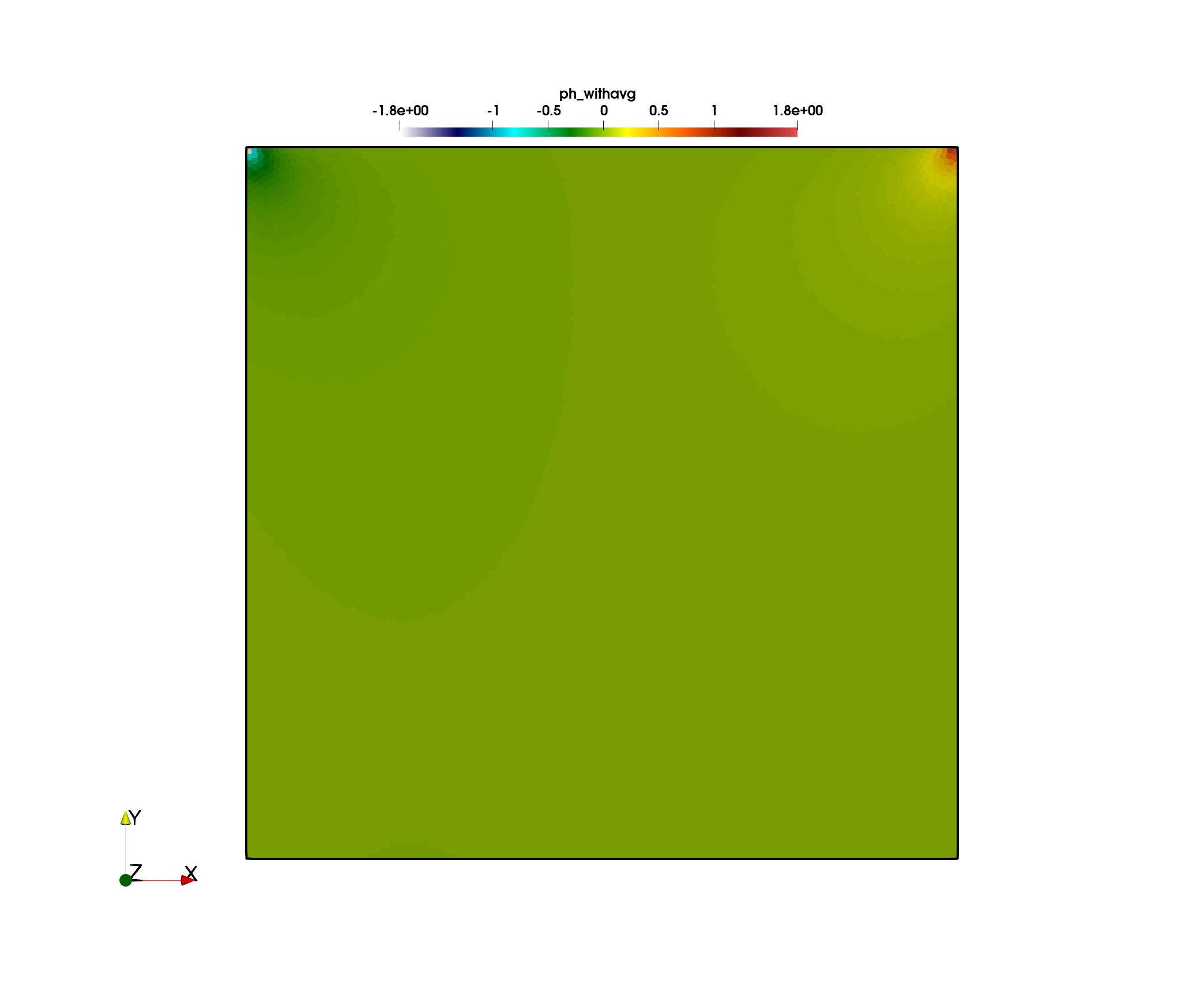}
			\end{minipage}\\
			\caption{Test \ref{subsec:lid_driven}. Approximated velocity components and computed pressure on a Voronoi mesh for the lid-driven cavity test.}
			\label{fig:lid_driven_1e8}
		\end{figure}

		\begin{figure}[!hbt]
			\centering
			\begin{minipage}{0.49\linewidth}\centering
				{\footnotesize \hspace{0cm}$\mathbb{K}^{-1}=10^{-8}\mathbb{I}$}\\
				\includegraphics[scale=0.14, trim= 12cm 8cm 12cm 6.3cm, clip]{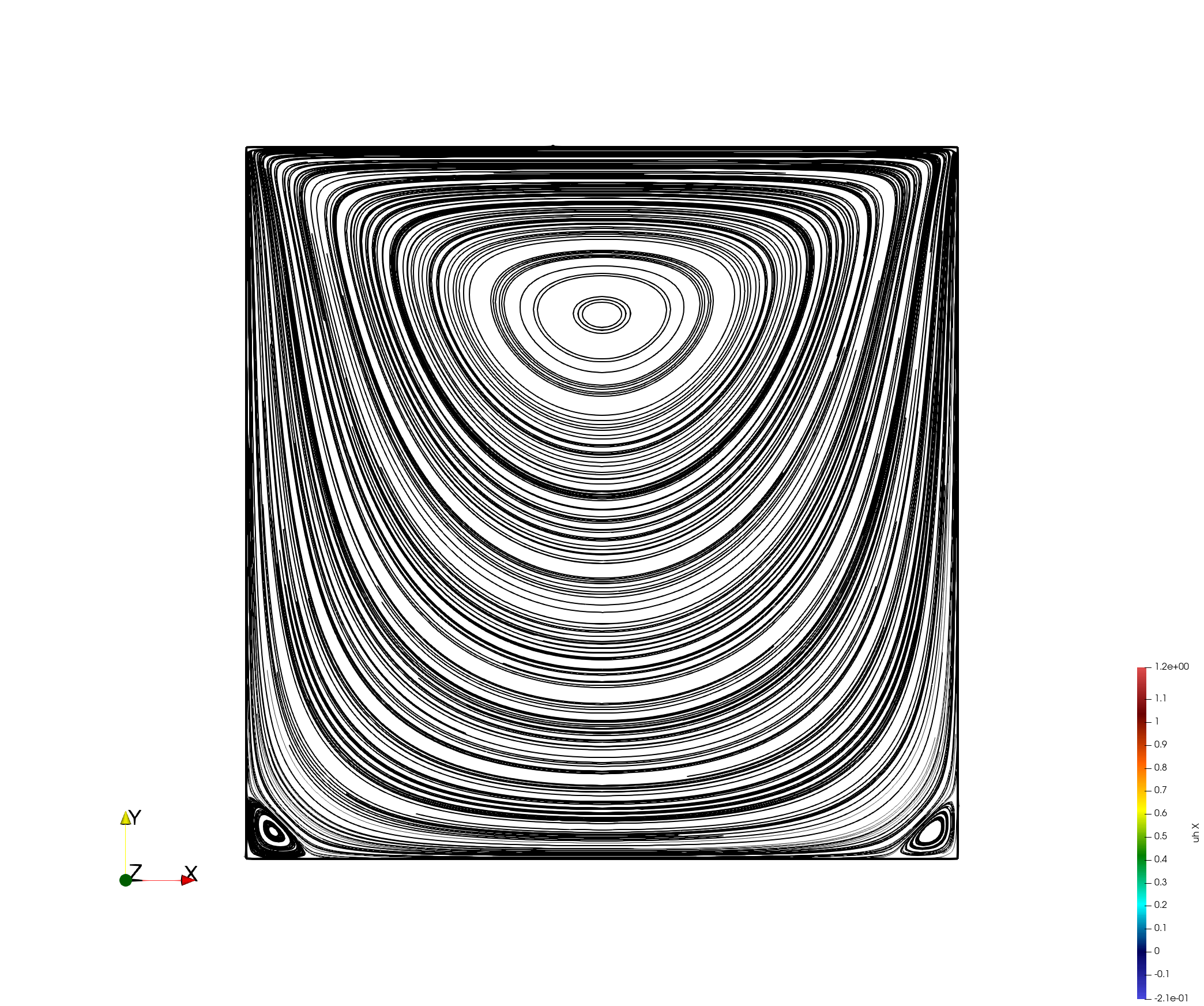}
			\end{minipage}
			\begin{minipage}{0.49\linewidth}\centering
				{\footnotesize \hspace{0cm}$\mathbb{K}^{-1}=10\mathbb{I}$}\\
				\includegraphics[scale=0.14, trim= 12cm 8cm 12cm 6.3cm, clip]{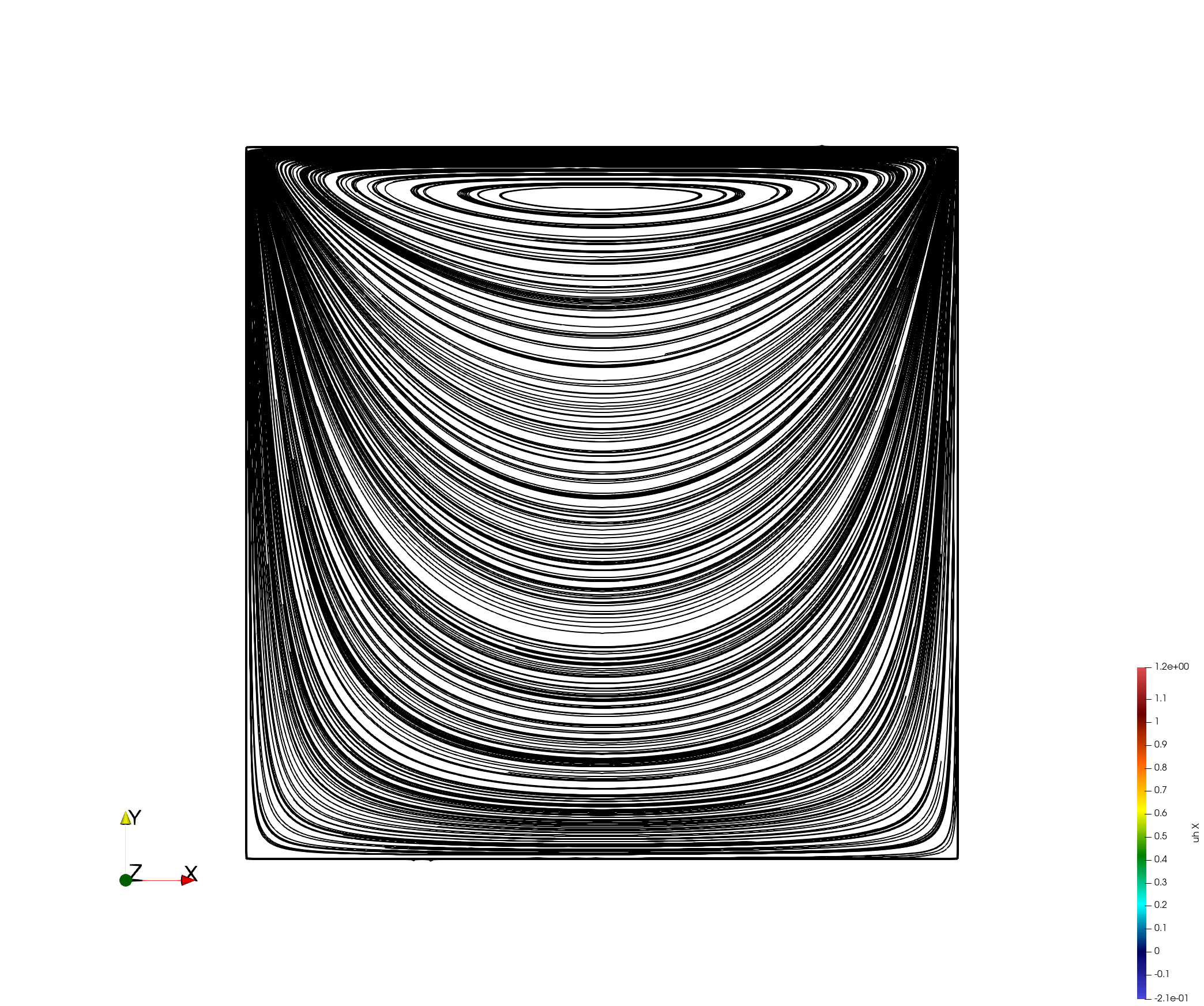}
			\end{minipage}\\
			\begin{minipage}{0.49\linewidth}\centering
				{\footnotesize \hspace{0cm}$\mathbb{K}^{-1}=10^{2}\mathbb{I}$}\\
				\includegraphics[scale=0.14, trim= 12cm 8cm 12cm 6.3cm, clip]{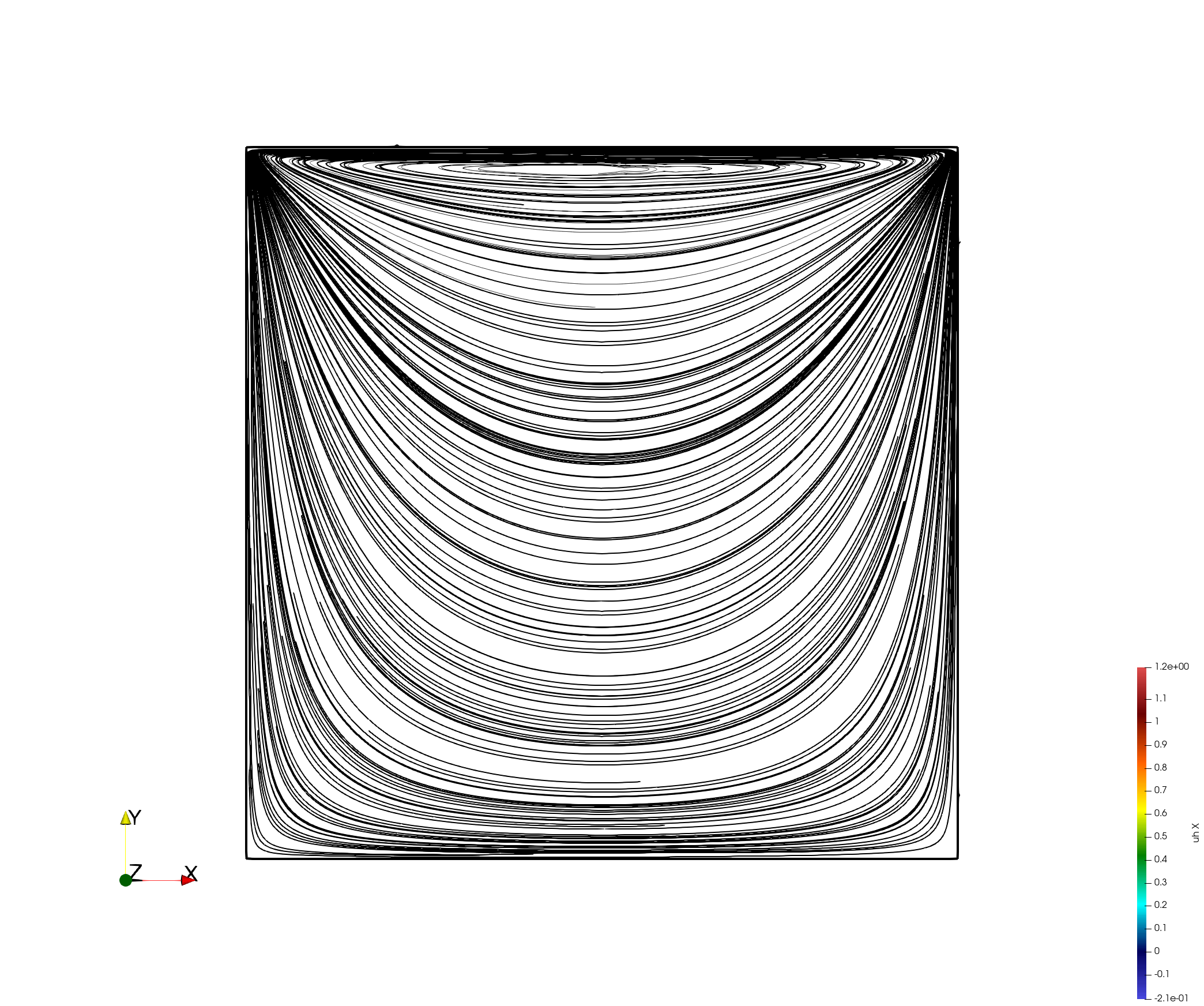}
			\end{minipage}
			\begin{minipage}{0.49\linewidth}\centering
				{\footnotesize \hspace{0cm}$\mathbb{K}^{-1}=10^{8}\mathbb{I}$}\\
				\includegraphics[scale=0.14, trim= 12cm 8cm 12cm 6.3cm, clip]{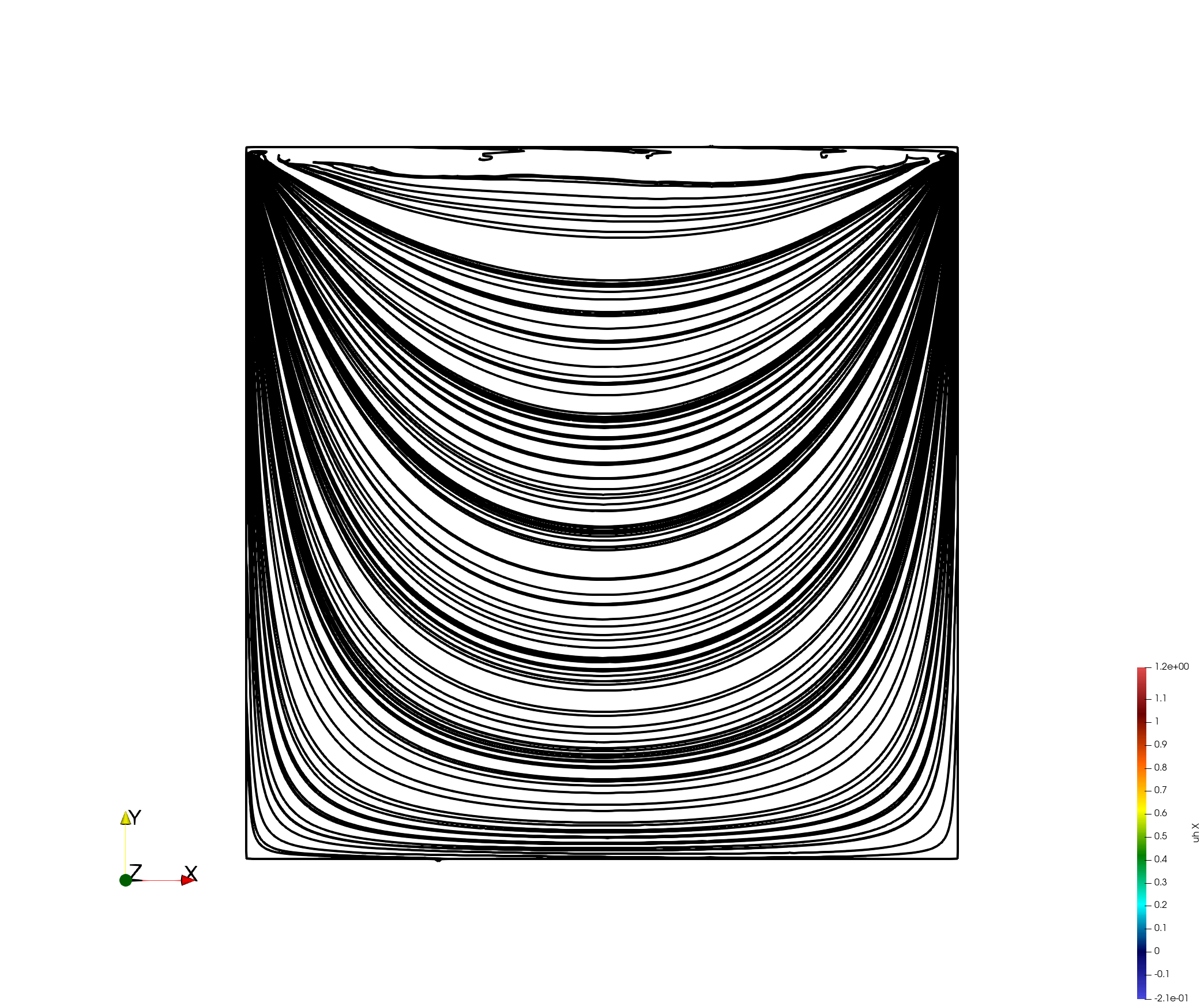}
			\end{minipage}\\
			\caption{Test \ref{subsec:lid_driven}. Comparison of the velocity streamlines obtained for several values of $\mathbb{K}$ for the lid-driven cavity test.}
			\label{fig:lid_driven_robustness}
		\end{figure}
		\begin{acknowledgements}
			The authors have been partially supported by project Centro de Modelamiento Matem\'atico (CMM), FB210005, BASAL funds for centers of excellence, and by the National Agency for Research and Development, ANID-Chile through project Anillo of Computational Mathematics for Desalination Processes ACT210087. The first author was partially supported by ANID-Chile through project FONDECYT 1220881. The second author was partially supported by the National Agency for Research and Development, ANID-Chile through FONDECYT Postdoctorado project 3230302. The third author was partially supported by the National Agency for Research and Development, ANID-Chile through FONDECYT Postdoctorado project 3240737. \\\\
			\bigskip
			\noindent\textbf{Data Availability.} Enquiries about data availability should be directed to the authors.\\
			\noindent\textbf{Conflicts of interest.} The authors have no conflicts of interest to declare.
		\end{acknowledgements}

		\bibliographystyle{siam}
		\bibliography{references}
		
	\end{document}